\documentclass[12pt,a4paper]{article}
\usepackage[utf8]{inputenc}
\usepackage[T1]{fontenc}
\usepackage[english]{babel}
\usepackage{wrapfig}
\usepackage{amsmath}
\usepackage{bbm}
\usepackage{xcolor}
\usepackage{centernot}
\usepackage{amsfonts, enumitem}
\usepackage{amsthm, mathrsfs}
\usepackage{amssymb}
\usepackage{hyperref}
\usepackage{authblk}
\hypersetup{
	colorlinks   = true,    % Colours links instead of ugly boxes
	urlcolor     = blue,    % Colour for external hyperlinks
	linkcolor    = blue,    % Colour of internal links
	citecolor    = red      % Colour of citations
}
\usepackage{graphicx}
\usepackage{tikz}
\usetikzlibrary{datavisualization}
\usetikzlibrary{datavisualization.formats.functions}
\usepackage{gnuplottex}
\usepackage[left=2.5cm, right=2.5cm, top=2.5cm, bottom=2.5cm, bindingoffset=1.cm]{geometry}
\usepackage{fancyhdr} % % %

\title{Critical percolation on preferential attachment graphs with infinite variance}

\author[1]{Peter Mörters}
\author[2]{Lucas Schätze}
\affil[1,2]{\footnotesize Mathematisches Institut, Universität zu Köln, Weyertal 86-90, 50931 Köln, Germany}
{
    \makeatletter
    \renewcommand\AB@affilsepx{: \protect\Affilfont}
    \makeatother

    \affil[ ]{Emails}

    \makeatletter
    \renewcommand\AB@affilsepx{, \protect\Affilfont}
    \makeatother

    \affil[1]{p.moerters@uni-koeln.de}
    \affil[2]{lschaet2@uni-koeln.de}
}
\date{}

\newcommand{\Var}{\operatorname{Var}}

\numberwithin{equation}{section}

\newtheorem{theorem}{Theorem}
\newtheorem{shared}{Dummy}[section]  % Hidden base counter for other environments

\newtheorem{definition}[shared]{Definition}
\newtheorem{lemma}[shared]{Lemma}

\begin{document}

\maketitle

	\begin{abstract} \noindent We study the inhomogeneous random graph with  preferential attachment kernel and degree distribution with power-law exponent $\tau\in(2,3)$ as a representative of the class of graphs of preferential attachment type with infinite variance degrees.  Under bond percolation with a positive retention probability independent of the size~$n$ of the graph there is a unique macroscopic component with high probability. We therefore investigate percolation probabilities $p_n\downarrow0$. We identify a moving critical window at \smash{$p_c \sim \beta n^{_{\frac{\tau-3}{2\tau-2}}}$}. Above this window, when  \smash{$p_n \gg p_c$}, the maximal component has size of order \smash{$n p^{_{\frac{\tau-1}{3-\tau}}}_{_n}$} and it is unique.
    Below this window, when \smash{$n^{_{\frac1{1-\tau}}} \ll p_n \ll  p_c$}, it is non-unique, 
    star-shaped and has size of order	\smash{$n^{_{\frac1{\tau-1}}} p_n$}.
In the critical window itself, the largest component scaled by     \smash{$\sqrt{n}$} converges in distribution to a positive random variable with a law given in terms of a subcritical Norros-Reittu graph. 
%In particular, there is a jump in the size of the largest component at the upper end of the critical window.
%At the upper end of this window the size of the largest component jumps from size of order~$\sqrt{n}$ to a much larger `tiny giant' with size of~order~$n^{\frac{\tau-1}{2}}$. 
This behaviour is markedly different from that seen for other classes of scale-free graphs and is conjectured to persist throughout the broad class of growing graphs  
%{\color{cyan}robust}
%of preferential attachment type 
with infinite variance.
\end{abstract}
   
\setcounter{tocdepth}{2}
\tableofcontents

\section{Introduction}

\subsection{Background}

It is a by now well-established fact in the random graph literature that locally tree-like scale-free random graphs with power-law exponent $\tau$ smaller than three only possess a percolative phase. Despite this absence of a non-degenerate phase transition, Cohen, ben-Avraham and Havlin~\cite{cbh} argue that for models that can locally be approximated by unimodular Galton-Watson trees, percolation critical exponents are
well defined near the limit of extreme dilution and that these exponents bear a strong dependence on~$\tau$. These heuristics have been made rigorous for the configuration model 
%(in a version allowing multiple edges) 
by Dhara, van der Hofstad and van Leeuwarden~\cite{DHL}. They show in particular that
when the degree distribution satisfies a power-law with exponent
$\tau\in(2,3)$ then there is a moving critical window~at
$$p_c\sim \beta n^{\frac{\tau-3}{\tau-1}} \quad\text{ for some $0<\beta<\infty$.}$$
For $p_n\gg p_c$ the largest component is of size $np_n^{\frac1{3-\tau}}$ and for
$p_n\ll p_c$ it is of size \smash{$n^{\frac1{\tau-1}}p_n$}. In the critical window the largest component has size of order \smash{$n^{\frac{\tau-2}{\tau-1}}$}.
%so there is no jump in the order of sizes within or at either end of the critical window.
%i.e.\ when \smash{$p_n\sim \beta n^{\frac{\tau-3}{\tau-1}}$}. 
A unique largest component emerges at the end of the critical window (when $\beta\uparrow\infty$), while components have a trivial star-like structure at the bottom of the critical
window (when $\beta\downarrow0$). Interestingly, this behaviour is partly due to the occurrence of multiple edges in the configuration model.
\smallskip
\pagebreak[3]

Bhamidi, Dhara and  van der Hofstad~\cite{BDH} investigate a class of scale-free graphs with infinite variance and rank-one connection probabilities without multiple edges, including the Norros-Reittu model, Chung-Lu model and inhomogeneous random graphs with a product kernel. Here the critical window is at 
$$p_c\sim \beta n^{\frac{\tau-3}{2}} \quad\text{ for some $0<\beta<\beta_c$.}$$
The characteristic features of this class in this window are
\begin{itemize}
    \item there is a finite  (explicitly computable) $\beta_c>0$ such that for $\beta<\beta_c$ the largest
component has size of an order depending on $\tau$ but strictly smaller than $\sqrt{n}$,
%of order $$n^{\frac{\tau^2-4\tau+5}{2(\tau-1)}}\ll \sqrt{n},$$
    \item scaled by the order this largest component has a random structure related to a preferential attachment-type graph,
    \item above the critical window, when $\beta>\beta_c$, a unique largest component of size~$\sqrt{n}$, the `tiny giant', emerges.
\end{itemize}
%They achieve very fine results on the emergence of a tiny giant component with size of order~$\sqrt{n}$. 
%Here the critical window is at $$p_n\sim \beta n^{\frac{\tau-3}{2}}.$$
%There is an explicitly computable $\beta_c>0$ such that for $\beta<\beta_c$ the largest
%component has size of order $$n^{\frac{\tau^2-4\tau+5}{2(\tau-1)}}\ll \sqrt{n},$$
%and for $\beta>\beta_c$ a unique ‘tiny giant’ of size $\sqrt{n}$ emerges. 
This critical percolation behaviour is conjectured to be universal for rank-one models with infinite variance in the absence of multiple edges.
\smallskip

%{\color{red}Check this again in "Multiscale genesis of a tiny giant for percolation on scale-free random graphs" by Shankar Bhamidi, Souvik Dhara, Remco van der Hofstad. } \smallskip
\pagebreak[3]

\subsection{Description of our results}

We investigate percolation on a class of dynamically growing graphs %preferential  attachment-type graphs 
with infinite variance caused by a preferential  attachment mechanism. 
In this class there is, for all percolation probabilities $p>0$, with high probability, a unique macroscopic component, which comprises $\sim\theta(p)n$ vertices, for some $\theta(p)>0$. However, the 
size of this component decays very quickly when $p\downarrow 0$, namely like
	\begin{align}\label{prop:one}
		\theta(p) = 
        \exp\left(-\frac{c}{p} \, (1+o(1))\right),
%        \exp\left(-\frac{\tau-2+o(1)}{p(\tau-1)}\right),
	\end{align}
for $c=\frac{\tau-2}{\tau-1}$, as shown by the authors in~\cite{waw}. It is therefore interesting to couple the percolation probability $p$ to the graph size $n$ such that $p_n\downarrow0$ and ask for a moving critical window in which we see a qualitative change of behaviour %regarding the size 
in the structure of the largest component. \smallskip

%In our models 
We show that our models exhibit such a critical window at 
$$p_c \sim \beta n^{\frac{\tau-3}{2(\tau-1)}} \quad\text{ for some $0<\beta<\infty$.}$$ 
They are in a new %completely different 
universality class, which has not been uncovered so far, 
%to the best of our knowledge 
even in the extensive non-rigorous statistical physics literature, see the surveys~\cite{DoGo07, Li}. This universality class is characterised by the following defining features,
\begin{itemize}
    \item the largest component has size of order $\sqrt{n}$ throughout the critical window. Note that in all previously known model classes the largest component in the critical window has size of order strictly smaller than $\sqrt{n}$,
    \item scaled by $\sqrt{n}$ the largest component in the critical window has a novel random structure related to a 
    %subcritical Norros-Reittu graph,
    rank-one graph,
    \item above the critical window the largest component is unique and has deterministic asymptotic size of order 
    larger than $\sqrt{n}$ but smaller than $n$, a
    %    $n^{\frac{\tau-1}{2}}$ 
    `tiny giant',
    \item below the critical window the largest components are stars of random size of smaller order
    than $\sqrt{n}$.
\end{itemize}
We conjecture that this  behaviour is universal across 
the broad class of growing graphs 
%{\color{cyan}robust}
with infinite variance degree distribution, which often arise through some preferential attachment mechanism.\smallskip

The model  we investigate is an inhomogeneous random graph with a preferential attachment kernel, which we define in Section~\ref{sec:definition}. It is a natural and relatively simple representative of a 
dynamically growing graph model where preference is given to connections linking young vertices to old ones. Dynamically growing graph models, and in particular preferential attachment type models, tend to have quite different behaviour at the phase boundary compared to the models in the class of the inhomogeneous random graphs with a product kernel studied in \cite{BDH}.
This was explored for graphs with finite variance in a recent paper by Jorritsma, Maillard and M\"orters~\cite{JMM} by means of a detailed analysis of the critical behaviour at the phase transition in growing graph models using branching random walk techniques. Their arguments are particularly useful to investigate subcritical and the most delicate near critical behaviour. %As there is no truely subcritical phase i
In the infinite variance case, in the absence of a nondegenerate phase transition,  we have to rely on more robust techniques, exploring a more classical path counting approach instead.
\smallskip

%Here the critical window arises when $$p_n \sim \beta n^{\frac{\tau-3}{2(\tau-1)}}.$$
We now informally describe our results for inhomogeneous random graphs of preferential attachment type with $\tau\in(2,3)$. These results will be formalized in Theorem~\ref{main} in Section~\ref{2.2}.
\smallskip

Below the critical window, when%
$$
n^{_{\frac1{1-\tau}}} \ll p_n \ll  n^{\frac{\tau-3}{2(\tau-1)}}
$$
the largest component arises from a competition of the initial, most powerful vertices. These vertices are typically not connected and have degree of order \smash{$n^{\frac1{\tau-1}}p_n$} 
and the largest components have the same order and contain at least one of these vertices. For smaller $p_n$ edges are either isolated, when 
$$
n^{-1} \ll p_n \ll  n^{_{\frac1{1-\tau}}}
$$
or even completely absent when $p_n\ll n^{-1}$. In other words, below the critical window the largest components are star-like and  not unique in their size. \smallskip

Throughout the critical window, i.e. when
$$p_n \sim \beta n^{\frac{\tau-3}{2(\tau-1)}},$$ 
the maximal connected component has random size of order $\sqrt{n}$ and is also not unique in this size. Scaled by $\sqrt{n}$ the size of the largest component is random and can be described as the maximum weight component in a subcritical Norros-Reittu graph~$\mathscr G_{\text{NR}}$ with deterministic vertex weights arising from the expected component  size of the vertex in the original graph.
%This graph emerges as a limit from a `core graph', which consists of finitely many early vertices and edges drawn between them if they are connected by paths of length two in the original graph. 
%While the connected components in this graph
%of the core graph 
%have random size, the number of vertices in the original graph connected to a vertex in~$\mathscr G_{NR}$ %the core concentrates,
%under the given scaling, 
%so that the scaling limit of the largest component is a random variable determined by the connection structure of the limit of~$\mathscr G_{NR}$. % the core graph.
\smallskip

Above the critical window the largest component is unique and the size concentrates at a constant multiple of size \smash{$np_{_n}^{_{\frac{\tau-1}{3-\tau}}}$}. A unique largest component of asymptotically constant (but not macroscopic)  size arising above the 
critical window is known as `tiny giant'. Here it emerges with a size larger than $\sqrt{n}$, in contrast to the other known graph models of infinite variance studied in \cite{BDH, DHL}.

\subsection{Overview of the proofs}

Results on the emergence of large components in random graphs are most often based 
on typical behaviour, either globally in terms of high density approximations given by differential equations~\cite{MR, BBS, sen}, or locally given by an exploration process which in the limit takes the form of a random tree~\cite{bjr, dm, nick, JMM}.  Neither of these approaches is promising in the case of infinite variance degree distributions when the connectivity behaviour is governed by a relatively small number of vertices of extreme power. We therefore base our analysis on the identification of scales in which a subgraph of hubs, called the core graph, has good connectivity features. The idea of core graphs has been used so far primarily to study distances in supercritical random graphs, see for example~\cite{mth} and \cite[Section 8.7.1]{rg2}, and in~\cite{BDH} to analyse critical behaviour in rank-one models. We  extend the method to growing graph models with infinite variance, where the structural properties of the core and hence the methods of analysis differ considerably compared to the rank-one case. \smallskip 

%Our definition of a core graph uses two scales, the threshold below which a vertex is a core vertex, and a threshold above which a vertex may serve as a connector between core vertices.  We now present the ideas of the proof in detail, but without explicit reference to the graph model we use, in order to emphasise the general nature of our approach.

We now present the ideas of the proof in detail, but without explicit reference to the graph model we use, in order to emphasise the general nature of our approach. Let $\mathscr G_n$ be the graph with $n$ vertices, indexed in order of increasing arrival time, after bond percolation with retention probability $p_n$. We use two scales to define the \emph{core graph}: Its vertex set consists of the first $m$ vertices, and two such vertices are connected by an edge in the core graph if in $\mathscr G_n$ they are  connected by a path of 
length two  with the index of the middle vertex,  called a \emph{one-connector},
%a weak vertex with 
at least~$sn$. The scales~$m$ and $s$ are chosen
%adapted to the various regimes 
such that the largest component in $\mathscr G_n$ is always given by vertices connected by a path in $\mathscr G_n$ to the largest (or maximal weight) connected component in the core graph. Our proofs are then based on a combinatorial analysis of  the structure of these paths eventually showing that the dominant contribution comes from paths of length one.

\subsubsection{Above the critical window.}

Let
%$m$ of order 
$m=\lfloor \mu n\,p_n^{_{2\frac{\tau-1}{3-\tau}}} \rfloor$
and $s=p_n^{_{\frac{\tau-1}{3-\tau}}}$. The proof relies on variation of the constant~$\mu$ in the definition of $m$. 
Irrespective of this constant, 
%by general results on inhomogeneous random graphs  
the core graph has a giant component~$\mathscr C$ consisting of an asymptotically constant proportion of vertices,
see Lemma~\ref{lem1}. This component is of course much smaller than the largest component in~$\mathscr G_n$, but we show in Lemma~\ref{unique} that, for sufficiently large $\mu$,
the component of~$\mathscr C$  in $\mathscr G_n$ is the unique largest component. We  use path counting arguments to systematically reduce the possible shortest paths leading to~$\mathscr C$ to be essentially just single edges, see Lemmas~\ref{pathlength}, \ref{compofm}, and \ref{orderm}. The size of the largest component is then obtained to leading order as the sum over the degrees in $\mathscr G_n$  of vertices in $\mathscr C$, see Lemma~\ref{uppersuper}. For a core vertex this degree concentrates on the scale \smash{$p{_{_n}^{_{\frac{1-\tau}{3-\tau}}}}$},
see Lemma~\ref{nnc}, and hence the tiny giant has size of order
\smash{$mp^{_{\frac{1-\tau}{3-\tau}}}_{_n}$}, which equals order
\smash{$n\,p^{_{\frac{\tau-1}{3-\tau}}}_{_n},$}
with the exact constant obtained in the limit $\mu\to\infty$.

\subsubsection{In the critical window.}

We choose $m$ of constant order and $sn=m+1$.
%not depending on $n$ but sufficiently large so that 
The largest component in the graph $\mathscr G_n$ restricted to $\{m+1,\ldots,n\}$ is still smaller than the largest degree in  $\mathscr G_n$ with high probability, see Lemma~\ref{lcc}. Hence the largest component overall must contain a vertex in $\{1,\ldots,m\}$. 
%As $n\to\infty$ the number of edges in $\{1,\ldots,m\}$ goes to zero so that the core graph only has edges coming from one-connectors. 
We analyse the connection probabilities between vertices $i,j\in\{1,\dots,m\}$ in the core graph. We show by Poisson approximation arguments that the numbers of one-connectors between vertices $i,j$ in the core graph converge to independent Poisson distributed random variables with expectations proportional to 
\smash{$i^{\frac1{1-\tau}}j^{\frac1{1-\tau}}$}, see Lemma~\ref{indep}.
Hence the core graph, for large $m$, approximates an infinite Norros-Reittu graph, which is subcritical as $\frac1{\tau-1}<\frac12$ and the edge density is small. %Similar to the situation above the critical window w
We then show that the number of vertices in $\{m+1,\ldots,n\}$ connected by a path to a given vertex $j\in\{1,\dots,m\}$ concentrates when scaled by \smash{$p_n n^{\frac1{\tau-1}}$} around its expectation, which is of 
order~\smash{$j^{\frac1{1-\tau}}$},
see Lemma~\ref{3.9}. Showing that these paths do not intersect significantly concludes the proof, in Lemma~\ref{conclusion}.

\subsubsection{Below the critical window.}

Below the critical window we show that, for any $m\in\mathbb N$, the connected components of the vertices in $\{1,\ldots, m\}$ are pairwise disjoint
with high probability, see Lemma~\ref{disj}. These components are essentially the direct neighbours of the vertices and
in the regime when $p_n\gg n^{\frac1{1-\tau}}$ the number of direct neighbours scaled by $n^{\frac1{\tau-1}}p_n$ concentrates at its mean,
as before. Otherwise, this is not the case and the number of direct neighbours is random, see Lemma~\ref{outdeg}. When  $p_n\ll n^{\frac1{1-\tau}}$ a vertex has no more than one direct neighbour.\pagebreak[3]

\section{Statement of the results}

\subsection{Inhomogeneous graphs of preferential attachment type.}\label{sec:definition}

%The inhomogeneous random graph with kernel of preferential attachment type is a solvable model, which shares many features  of more involved preferential attachment models. If set-up with a power-law exponent $\tau\in(2,3)$ the model is \emph{robust} in the sense that there is a connected component comprising an asymptotically positive proportion of the vertices, no matter how small the edge density. However, the asymptotic proportion of vertices in this giant component decreases very quickly to zero when the edge density decreases to zero. Our first result, Theorem~1, identifies  the exact speed at which this happens. In our second and principal result, Theorem~2,  we allow to the edge density to decrease with the graph size and determine the size of the largest component in this 
%case. More specifically, we identify the precise parameter window in which this size of the largest components decreases from linear to subpolynomial size.\bigskip

Inhomogeneous random graphs, as defined by Bollobas, Janson and Riordan in~\cite{rjb}, see also~\cite[Chapter 3.2]{rg2} or \cite{söd}, are parametrized by a symmetric, continuous kernel
$$\kappa \colon (0,1] \times (0,1] \to [0,\infty).$$
Given the kernel, the graph $\mathscr G_n$ has vertex set $\{1,\ldots,n\}$ and any pair of distinct vertices $i,j\in\{1,\ldots,n\}$ is connected by an edge, independently with probability
$$p_{ij}:=\frac1n \kappa \Big( \frac{i}n, \frac{j}n \Big) \wedge 1.$$
In most preferential attachment models the probability at which a vertex arriving at time $j$ connects to an earlier vertex $i$ is proportional to its degree. This degree is of order $(j/i)^\gamma$
for some $0\leq \gamma<1$ 
%, see \cite{dmm}, 
and the proportionality factor therefore inverse to order $\sum_{i=1}^{j-1} (j/i)^\gamma \sim j$. This motivates us to choose the preferential attachment kernel $\kappa$ as
$$\kappa(x,y)=\beta (x\wedge y)^{-\gamma}(x\vee y)^{\gamma-1},$$
for some density parameter $\beta>0$ and parameter $0\leq \gamma<1$. The resulting graphs~$\mathscr G_n$ are called \emph{inhomogeneous random graphs of preferential attachment type}.
{They have essentially the same edge probabilities and a slightly simplified covariance structure compared to the classical preferential attachment models, see \cite{rg1, rg2}. }
For all
%small values of $\beta$ 
$\beta<1$ the connection probabilities of two vertices $i,j$ are explicitly given by
$$p_{ij}= \beta (i\wedge j)^{-\gamma}(i\vee j)^{\gamma-1}.$$
The probabilities $(p_{ij})$ do not depend on $n$
due to the fact that the preferential attachment kernel is homogeneous of index~$-1$, which is a characteristic feature of dynamically growing models and in particular of preferential attachment-type graphs. As a result these graphs can also be defined with infinite vertex set $\mathbb N$, and in this form have been studied by Durrett and Kesten in~\cite{DK}. A version of their model  is featuring in the scaling limits of \cite{BDH}.\smallskip

This model has been studied extensively in the case $\gamma=0$ under the name Dubins' model or uniformly grown graph, see for example~\cite{bjr, JMM}. For $0<\gamma<1$ it is easy to check that the graph thus constructed has a power-law degree distribution with exponent $$\tau=1+\frac1\gamma.$$
%When $0<\gamma<\frac12$ the subcritical behaviour of this model has been studied by M\"orters and Schleicher~\cite{nick} and the behaviour in a window around the critical density by Jorristma et al in \cite{JMM}.
%Note that because the kernel is homogeneous of index $-1$, the connection probabilities $p_{i,j}$ do not depend on $n$. This is due to the dynamic nature of the preferential attachment model.
%\bigskip
%
When $\gamma>\frac12$ the power-law exponent $\tau$ lies
between two and three, a range which is often desired when modelling scale-free networks. Precisely in this case there is no nontrivial phase transition
as the largest component in the graph has macroscopic size no matter how small the edge density, see Dereich and M\"orters~\cite{dm, wll}. We study the inhomogeneous random graphs of preferential attachment type in this regime. Percolating this model with a parameter $p$ is equivalent to replacing $\beta$ by $p\beta$ so that, from now on, we only vary the parameter~$\beta$ and express our results in terms of the model 
parameters~$\beta,\gamma$ instead of $p,\tau$.
%Our first result determines the asymptotic size of this component when the edge density is small.

%\newtheorem{1}{Proposition}[]
%\begin{1} \label{1.1}
% Let $\frac{1}{2}\leq \gamma<1$ and write $C_1(\mathscr{G}_n)$ for the largest connected component. Then we have %as $\beta\downarrow 0$,
%	\begin{align*}
%		\lim_{n\to\infty} \frac{\#C_1(\mathscr{G}_n)}{n} = \exp\left(-(1-\gamma)\frac{1+o(1)}{\beta}\right),
%	\end{align*}
%where the limit is taken in probability, the limiting quantity is deterministic and $o(1)$ refers to the asymptotics~$\beta\downarrow 0$. 
%\end{1}

%{\color{red}\noindent Remark: I deleted the remarks}\\

\subsection{The main result: Behaviour around the critical window.}
\label{2.2}

For any finite graph $G$ we denote by $C_1(G)$ its largest connected component, when either it is clear that it is unique or one can break ties arbitrarily, and by $\#C_1(G)$ its size. 
Similarly, $C_i(G)$ is the $i$th largest component and 
$\#C_i(G)$ its size.\smallskip

Equation~\eqref{prop:one} implies that the largest component $C_1(\mathscr G_n)$ of $\mathscr G_n$ has size of order~$n$ for all $\beta>0$, but if $\beta=\beta(n)$ goes to zero with the number of vertices, the largest component is no longer of macroscopic size. Indeed, by monotonicity we have  $\lim_{n\to\infty}\frac1n \#C_1(\mathscr{G}_n) \leq e^{-\frac{c}{\beta}}$, for all $\beta>0$. We now investigate the size of the largest component in this scenario. \smallskip

Recall that a sequence of nonnegative random variables $(X_n)_{n\in\mathbb N}$ is \emph{tight} if
\begin{equation*}
    \lim_{M\to\infty }\, \sup_{n\in\mathbb N} \,\mathbb P \big( \tfrac{1}{M} \le X_n \le M \big)=1 \;.
\end{equation*}

\begin{theorem} \label{main}
 Let $\tfrac12 <\gamma <1$ 
 %and write $\#C_1(\mathscr{G}_n)$ for the size of the largest  and $\#C_2(\mathscr{G}_n)$ for size of the second largest component in the graph 
 and $\mathscr G_n$ the inhomogeneous random graph with the kernel 
 $\kappa(x,y)=\beta(n) (x\wedge y)^{-\gamma}(x\vee y)^{\gamma-1}$
 for $\beta(n)\searrow0$.
 \begin{enumerate}
     \item[(I)] \emph{Tiny giant regime}: If 
     % $n\beta(n)^{\tfrac{2}{2\gamma-1}}\to\infty$ 
     $\beta(n)n^{\gamma-\tfrac{1}{2}}\to \infty$, then there exists a constant $\theta>0$ 
     %\in\big(0, \,1+\tfrac{1}{\gamma(1-\gamma)}\big)$, 
     such that 
     $$\displaystyle \frac{\#C_1(\mathscr{G}_n)}{n\,\beta(n)^{\frac{1}{2\gamma-1}}}  \overset{\mathbb P}{\longrightarrow } \theta \,\;\text{ and }\;\frac{\#C_2(\mathscr{G}_n)}{n\,\beta(n)^{\frac{1}{2\gamma-1
}}}  \overset{\mathbb P}{\longrightarrow }0 \;.$$
     \item[(II)] \emph{Critical window}: If $\beta(n)n^{\gamma-\tfrac{1}{2}}\to \varphi >0 $, then
     $$ \displaystyle\frac{\# C_1(\mathscr{G}_n)}{\sqrt{n}}  \overset{d}{\longrightarrow} \varphi \sup_{i\in\mathbb N}\bigg\{\sum_{j\in\mathcal C(i)} \frac{1}{\gamma}\, j^{-\gamma}\bigg\}\;,$$
     where $\mathcal{C}(i)$ is the component of $i$ in the random graph
     $\mathscr{G}_{\text{NR}}$ with vertex set $\mathbb N$, where two vertices $i,j\in\mathbb N$ have a \smash{$\text{Pois}(\tfrac{\varphi^2}{2\gamma-1}(ij)^{-\gamma})$} number of edges between them, independently for all pairs of vertices.
     
     \item[(III)] \emph{Star component regime}: If  
    $\beta(n)n^{\gamma-\tfrac{1}{2}}\to 0$  but $\beta(n)n^{\gamma}\to\infty$, 
     %$\limsup_{n\to\infty} \tfrac{\log(\beta(n))}{\log(n)}<\tfrac12-\gamma$ 
     %assume further that $\beta(n)=\theta(n)\,n^{-\alpha},$ with $\alpha\in(\gamma-\tfrac12,\gamma)$ and $\theta(n)$ a function such that $\tfrac{\log\theta(n)}{\log n}\to 0$ as $n\to \infty$. 
     then
     $$
      \frac{\#C_1(\mathscr G_n)}{n^\gamma \beta(n)} \overset{\mathbb P}{\longrightarrow} \frac{1}{\gamma} \;.
     $$
 %     $$\displaystyle \mathbb{P}\bigg( \limsup_{n\to\infty}\frac{\#C_1(\mathscr{G}_n)}{n^{\gamma-\alpha+o(1)}}<\infty\text{ and } \liminf_{n\to\infty}\frac{\#C_1(\mathscr{G}_n)}{n^{\gamma-\alpha+o(1)}}>0\bigg)=1\;.$$
 \item[(IV)]  \emph{Bounded component regime}: 
 \begin{itemize}
     \item[(i)]
 If $\beta(n)n^{\gamma}\to c\in(0,\infty)$, then
     $ \big(\#C_1(\mathscr{G}_n)\big)_{n\in\mathbb N} \text{ is tight}$; 
     \smallskip
  \item[(ii)] if $\beta(n)n^{\gamma}\to 0$ and $\beta(n)n\to\infty$, then
     $ \mathbb P \big(\#C_1(\mathscr{G}_n)= 2\big)\to 1$,\smallskip
   \item[(iii)]  if $\beta(n)n\to0$, then  $ \mathbb P \big(\#C_1(\mathscr{G}_n)=1\big)\to 1$.
     \end{itemize}
 \end{enumerate}
\pagebreak[3]
\end{theorem}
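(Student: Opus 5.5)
The proof splits along the four regimes. All of them rest on the explicit edge probabilities $p_{ij}=\beta(i\wedge j)^{-\gamma}(i\vee j)^{\gamma-1}$ and on first and second moment path counting. One computation is used throughout. The expected number of neighbours in $\{m+1,\dots,n\}$ of a fixed vertex $j$ is
\[
\sum_{k\le n}\beta j^{-\gamma}k^{\gamma-1}\approx \tfrac1\gamma\beta j^{-\gamma}n^\gamma .
\]
For fixed $j$ this degree is a sum of independent Bernoulli variables, so it concentrates around its mean when $\beta n^\gamma\to\infty$ and is Poisson-like when $\beta n^\gamma\to c$.

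\textbf{Regimes (III) and (IV).} I would begin with these, since they are the simplest. In (III), fix $m$. Vertex $1$ has degree $(1+o(1))\frac1\gamma\beta n^\gamma$, which gives the lower bound. For the upper bound I would show two things. First, the components of $1,\dots,m$ are pairwise disjoint and consist essentially of direct neighbours. The expected number of paths of length two between two early vertices is of order $\beta^2 n^{2\gamma-1}$, which tends to $0$ exactly when $\beta n^{\gamma-1/2}\to0$. Longer paths are controlled by a geometric series in $\beta^2n^{2\gamma-1}$. Second, the vertices with index $>m$ carry components of size $\le \varepsilon\beta n^\gamma$. This follows because the degree of vertex $j$ scales like $j^{-\gamma}$, together with a union bound over $j$ using Chernoff bounds, plus a bound showing that two-step expansions from a vertex $j$ add only $o(\beta n^\gamma)$. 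The parts of (IV) then follow from counting.
\begin{itemize}
\item For (iii), the expected number of edges is $\sum_{i<j}p_{ij}\asymp\beta n\to0$.
\item For (ii), the expected number of paths of length two is $\asymp \beta^2\sum_{i}(\text{degree}_i)^2\asymp\beta^2 n^{2\gamma}\to0$, while edges do exist because $\beta n\to\infty$.
\item For (i), the degrees are tight Poisson-type variables, and the same path bounds give tightness of component sizes.
\end{itemize}

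\textbf{Regime (II).} Take $m$ fixed and large. Build the core graph on $\{1,\dots,m\}$, where an edge represents a one-connector $k>m$ adjacent to both $i$ and $j$. The number of such connectors is a sum of independent indicators with mean
\[
\sum_k \beta^2(ij)^{-\gamma}k^{2\gamma-2}\to \tfrac{\varphi^2}{2\gamma-1}(ij)^{-\gamma}.
\]
By Poisson approximation (Chen--Stein), the connector counts over all pairs converge jointly to independent Poisson variables, while direct edges among $\{1,\dots,m\}$ vanish. Conditional on this, each core vertex $j$ has $(1+o_P(1))\frac{\varphi}{\gamma}j^{-\gamma}\sqrt n$ neighbours. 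Paths of length $\ge2$ from $j$ into the bulk, and overlaps between the neighbourhoods of different core vertices, contribute $o(\sqrt n)$. Finally I need the following: with $m$ large, the largest component of the graph restricted to $\{m+1,\dots,n\}$, together with any contribution from core vertices $j>m$, is at most $\varepsilon\sqrt n$ with probability close to $1$. This uses $m^{-\gamma}$ small and subcriticality, since $\sum_j j^{-2\gamma}<\infty$. Then let $m\to\infty$ and use that $\mathscr G_{\text{NR}}$ has finite components and an almost surely finite supremum.

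\textbf{Regime (I), the main obstacle.} Set $s=\beta^{1/(2\gamma-1)}$ and $m=\mu n s^2$. Connections between core vertices via connectors $k\ge sn$ form an inhomogeneous random graph with kernel of rank-one type, $(xy)^{-\gamma}$ after rescaling by $m$. This graph is supercritical, and for large $\mu$ it has a giant $\mathscr C$, by Bollob\'as--Janson--Riordan. Core vertices have degree of order $s^{-1}$, so the component has size $\asymp m/s=n s$. The hard part is the upper bound. One must show that all other components are $o(ns)$, and that vertices attached to $\mathscr C$ through longer paths or through non-core hubs contribute negligibly after $\mu\to\infty$. I would try a truncated path-counting bound, the expected number of self-avoiding paths with all intermediate vertices beyond $m$, summed over lengths, and show it is a convergent series whose tail vanishes as $\mu\to\infty$. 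The constant $\theta$ is then identified by a monotone limit in $\mu$.
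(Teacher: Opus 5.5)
Regimes (II)--(IV) follow workable lines, up to the two points at the end, and your direct path counts for (IV) are fine. Regime (I), however, has a genuine gap at exactly the step you flag, and the tool you propose cannot close it.

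An unconditional first-moment count of self-avoiding paths into $\{1,\dots,m\}$, with intermediate vertices beyond $m$, controls the vertices attached to the \emph{whole} core, not to its giant $\mathscr C$.
\begin{itemize}
\item Its length-one term is the total out-degree of the core, $\frac1\gamma\beta n^\gamma\sum_{i\le m}i^{-\gamma}\asymp\mu^{1-\gamma}sn$. This diverges relative to $sn$ as $\mu\to\infty$, because by Lemma~\ref{mu} the giant carries only a fraction of order $\mu^{\gamma-1}$ of the core weight.
\item The length-three term (core vertex, connector, hub in $(m,sn]$, leaf) is of order $\mu^{2-3\gamma}sn$, which grows with $\mu$ when $\gamma<\frac23$. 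So the ``tail'' does not vanish as $\mu\to\infty$.
\end{itemize}
Such a count therefore gives at best an upper bound of order $sn$ with the wrong constant. You cannot fix this by restricting to paths ending in $\mathscr C$: that is no longer a product computation, since membership in $\mathscr C$ is decided by the same connector edges. Nor can detours be discarded. For fixed $\mu$, the vertices attached to $\mathscr C$ through a single hub in $(m,sn]$ are still of order $sn$ (roughly $\mu^{1-2\gamma}sn$).

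The missing idea is the paper's two-scale sandwich, which never conditions on the giant.
\begin{itemize}
\item First reduce all attachments to direct edges and one-paths (Lemmas~\ref{pathlength}, \ref{compofm}, \ref{3.12}).
\item Fix $\mu$ and take $M\gg\mu$. Hubs in $(m,\lfloor M\beta(n)^\alpha n\rfloor]$ reached from $C_1(\mathcal G^{(2)}_{n,\mu})$ by one-paths whose even steps stay below $M\beta(n)^\alpha n$ lie in $C_1(\mathcal G^{(2)}_{n,M})\supset C_1(\mathcal G^{(2)}_{n,\mu})$. Their leaves are therefore already counted in $\sum_{i\in C_1(\mathcal G^{(2)}_{n,M})}\mathrm{outdeg}(i)$.
\item All other detours go through hubs beyond $M\beta(n)^\alpha n$. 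These are bounded by an unconditional first moment over the whole $\mu$-core, of size $C(\mu)M^{1-2\gamma}sn$ (Lemma~\ref{orderm} and the BK argument in Lemma~\ref{uppersuper}).
\item Both sides of the sandwich are identified via concentration of out-degrees and the BJR theorem on weighted giant sizes.
\item Finiteness of $\theta$ needs the asymptotics $\rho(c)\sim C_1c^{\gamma/(2\gamma-1)}$ of the rank-one survival function.
\end{itemize}
Uniqueness also needs its own argument, which you do not give. Non-giant core components have size $O(\log m')$, and their out-degree sums are $o(sn)$ by an exponential-moment union bound (Lemma~\ref{unique}). Components avoiding the first $\lceil M\beta(n)^\alpha n\rceil$ vertices are small (Lemma~\ref{giant}).

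Two smaller points. In (II), $\mathscr G_{\text{NR}}$ does not have finite components. Since $\gamma<1$, $\sum_j\lambda_{ij}=\frac{\varphi^2}{2\gamma-1}i^{-\gamma}\sum_j j^{-\gamma}=\infty$, so by Borel--Cantelli every vertex has infinitely many neighbours almost surely. The component of $i$ in the core graph on $\{1,\dots,m\}$ therefore never stabilises as $m\to\infty$; only the weighted sums $\sum_j\frac1\gamma j^{-\gamma}$ converge. You must prove two things:
\begin{itemize}
\item $\sup_i\sum_{j\in\mathcal C(i)}j^{-\gamma}<\infty$ almost surely (the paper does this via a second-moment bound on weighted component sizes);
\item the weight reached through vertices beyond $m$ vanishes.
\end{itemize}
For fixed $m$, the vertices reached from $j\le m$ through hubs beyond $m$ number of order $m^{1-2\gamma}\sqrt n$, not $o(\sqrt n)$. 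So only the iterated limit ($n\to\infty$, then $m\to\infty$) works.

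In (III), Chernoff bounds on degrees plus control of two-step neighbourhoods do not bound components of $\{m+1,\dots,n\}$ in which several hubs are chained through connectors. You need an estimate summed over all path lengths.
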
 

\noindent\emph{Remarks:}
\begin{itemize}[leftmargin=7mm]
%    \item[(i)] If $\beta(n)n\to0$ then it is easy to see by direct calculation, that the graph has no edges with high probability.
    \item[(i)] In the proof we will see that for fixed $i\in\mathscr G_n$ we have $$\frac{\text{deg}(i)}{n^\gamma\beta(n) } \overset{\mathbb P}{\longrightarrow}\frac1\gamma \, i^{-\gamma}.$$ While the largest component in regime $(I\!I)$ arises from 
    the direct neighbours of a {tight}, random number of vertices, the largest 
    component in regime $(I\!I\!I)$ arises from the direct neighbours of the vertex of 
    maximal degree. It follows that in these regimes  $\#C_2(\mathscr G_n)/ n^\gamma \beta(n)$ does not vanish, contrary to regime~$(I)$.
    \item[(ii)] Comparing the behaviour in the critical window for the rank-one class
    studied in \cite{BDH} and our preferential attachment class there is an interesting duality to be observed. While for a preferential attachment graph the scaling limit of the  largest component arises from a core resembling a Norros-Reittu (or rank one) graph, for the Norros-Reittu random graph (and similar models) the largest component arises from a preferential attachment graph; see Theorem 2.3 and Lemma 3.4 in \cite{BDH}.
    \item[(iii)] When $\gamma <\tfrac12$ there exists a percolation phase transition at a positive density $\beta_c>0$. In this case M\"orters and Schleicher~\cite{nick}  identify the size of the largest subcritical component. Further,
    in \cite{JMM} Jorritsma et al.\ identify a critical window 
    $\beta(n)=\beta_c+ \frac{\alpha}{(\log n)^2}$
    in which the largest component has size of order \smash{$\frac{\sqrt{n}}{\log n}$} if  $\alpha<\pi^2$ and size of larger order if $\alpha\ge \pi^2$.
    \item[(iv)] For $\gamma=\frac12$ the preferential attachment kernel has rank one, but the case of $\tau=3$ and infinite variance is not covered in the papers~\cite{BDH, vdH} discussing such models.
\end{itemize}

\section{Proofs}

\newtheorem{4.1}{Theorem}[section]
In the following we write $x\sim y$ for the event that two vertices $x, y$ are nearest neighbours in the graph $\mathscr G_n$ and for any subset of vertices $\mathcal{C}\subset\mathscr{G}_n$ we write $x\sim \mathcal{C}$ for the event that $x$ is a nearest neighbour of at least one of the vertices in $\mathcal{C}$, similarly $ \mathcal{C}\sim \mathcal{C}'$ for two subsets $\mathcal{C},\mathcal{C}'\subset\mathscr{G}_n$.
In the same spirit we write $x\leftrightarrow y$ if there exists a path of any length connecting $x$ with $y$, i.e. $\exists l\in\mathbb N $ and $x_0,x_1,\ldots x_l\in\{1,\dots,n\}$ with $x_0=x$, $x_l=y$ and $x_{i-1}\sim x_i$ for all $i=1,\dots,l$.

\subsection{Above the critical window}

The shaping structure for the largest connected component both in and above the critical window are the powerful vertices in the core graph, which we now define.

\begin{definition} \label{def1}
Let $m\leq n$ be an integer and $s=s(n)$ such that $m\le sn \le n$. 
  \begin{enumerate}
    \item  Let $x,y \in \{1,\dots,n\}$ be two vertices in $\mathscr{G}_n$. We call $u\in\{ \lfloor sn \rfloor +1,
    \dots,n\}$ a \emph{one-connector} for $x$ and $y$, if $x\sim u$ and $u\sim y$ in $\mathscr{G}_{n}$. If two vertices $x$ and $y$ are distinct and connected by a one-connector we write \smash{$x\overset{_1}{\longleftrightarrow} y$}.
    \item We define the \emph{core graph} by its vertex set $C_m=\{1,\dots,m\}$  and edge set
        \begin{align*}
            E_m:=\big\{\{x,y\}\subset C_m  \,:\, x\overset{1}{\longleftrightarrow} y   \big\}\;.
        \end{align*}
\end{enumerate}
\end{definition}

%        Note that this is not a subgraph of $\mathscr{G}_{2n}$, but every component of $(C_n,E_n)$ corresponds to a larger component in $\mathscr{G}_{2n}$, which includes (but is not limited to) the same vertices as in $C_n$. 

\noindent In the regime  $\beta(n)n^{\gamma-\tfrac{1}{2}}\to \infty$ we look at the core graph with 
\begin{align}\label{mmudef}
    m:=\lfloor \mu \,n\,\beta(n)^{\alpha}\rfloor & \text{ and } s:=\beta(n)^{\alpha /2 } \text{  for } \alpha:=\tfrac{2}{2\gamma-1},
\end{align} 
where $\mu>0$ and denote this graph by $\mathcal G_{n,\mu}^{_{(2)}}$.
%We write $C_i(\mathcal G_{n,\mu}^{_{(2)}})$ for the $i$th largest component of this graph. 
%Note that w
With this definition $m\ll sn $, which will be important below, when we compare core graphs with different values of $\mu$.\smallskip

{The properties of this graph follow from the established theory of inhomogeneous random graphs, which relates the inhomogeneous random graph with kernel 
\begin{align}\label{kappamu}
    \kappa_\mu(x,y)= \mu^{1-2\gamma}(2\gamma-1)^{-1}x^{-\gamma}y^{-\gamma}\,.
\end{align}
to a multitype Galton-Watson tree where the offspring of a vertex of type $x\in(0,1]$ is a Poisson process of types with intensity $\kappa_\mu(x,y) \, dy$, see \cite{rjb, rg1, rg2}. By $\rho(\kappa_\mu,x)$ we denote the survival probability of this multitype Galton-Watson tree when started with a single individual of type~$x$.}

\begin{lemma}\label{lem1} 
        	For any $\mu>0$ the core graph $\mathcal G_{n,\mu}^{_{(2)}}$ satisfies
        $$ \frac{\#C_1\big(\mathcal G_{n,\mu}^{(2)}\big)}{n\,\beta(n)^{\alpha}}\overset{\mathbb P}{\longrightarrow} \mu \int_0^1 \rho(\kappa_\mu, x) \, dx >0\quad \text{ and } \quad \frac{\#C_2\big(\mathcal G_{n,\mu}^{(2)}\big)}{n\,\beta(n)^{\alpha}}\overset{\mathbb P}{\longrightarrow}0.$$  
\end{lemma}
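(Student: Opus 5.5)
The plan is to show that $\mathcal G_{n,\mu}^{(2)}$, on the vertex set $\{1,\dots,m\}$, is asymptotically equivalent to the inhomogeneous random graph of Bollob\'as, Janson and Riordan with kernel $\kappa_\mu$ from \eqref{kappamu} and types $x/m$. We then read off the giant from their general theorem. Dividing by $n\beta(n)^\alpha$ and using $m/(n\beta(n)^\alpha)\to\mu$ gives exactly the limit $\mu\int_0^1\rho(\kappa_\mu,x)\,dx$.

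\emph{Step 1: first-moment computation of the core edge probabilities.}
\begin{itemize}
\item For $x<y\le m$ and $u>sn$, the probabilities $p_{xu}=\beta x^{-\gamma}u^{\gamma-1}$ are genuine probabilities. Indeed $\beta(sn)^{\gamma-1}\to0$, because $sn=\beta^{1/(2\gamma-1)}n\to\infty$ by $\beta n^{\gamma-1/2}\to\infty$.
\item The expected number of one-connectors of $x,y$ is
$$\sum_{u>sn}\beta^2(xy)^{-\gamma}u^{2\gamma-2}=(1+o(1))\,\frac{\beta^2n^{2\gamma-1}}{2\gamma-1}\,(xy)^{-\gamma}.$$
Here $2\gamma-2>-1$ and $s\to0$, so the sum is dominated by $u$ of order $n$.
\item Writing $x=am$ and $y=bm$, and using $m^{1-2\gamma}=\mu^{1-2\gamma}n^{1-2\gamma}\beta^{-2}$, this quantity equals $(1+o(1))\frac1m\kappa_\mu(a,b)$.
\item Since this is $o(1)$ uniformly in $x,y$ (the worst case $x=y=1$ gives $\beta^2 n^{2\gamma-1}\cdot$const, but all we need is domination by the integrable rank-one kernel), the probability that the pair $\{x,y\}$ is an edge of the core graph is $(1+o(1))\frac1m\kappa_\mu(\frac xm,\frac ym)\wedge 1$ up to second-order corrections.
\end{itemize}

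\emph{Step 2: handle the dependence between core edges.} This is the step I expect to be the main obstacle.
\begin{itemize}
\item Core edges are not independent, since a single connector $u$ can create a clique among all of its core neighbours.
\item Different connectors act independently, and each $u$ attaches to each core vertex independently.
\item The expected number of core neighbours of $u>sn$ is at most
$$\beta(sn)^{\gamma-1}\sum_{x\le m}x^{-\gamma}\asymp \beta^{\gamma/(2\gamma-1)}\to0.$$
\item Hence the expected number of connectors with at least three core neighbours is of smaller order than the number of edges, which is of order $m$. Such connectors are therefore incident to only $o(m)$ core vertices with high probability.
\item I would remove these connectors and argue by the stability of the giant in Bollob\'as--Janson--Riordan graphs under deletion of $o(m)$ edges or vertices. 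This can be implemented through the sandwich $\kappa_\mu(1-\varepsilon)\le\cdot\le\kappa_\mu(1+\varepsilon)$ together with the continuity of $\int\rho(\kappa,x)\,dx$ in $\kappa$.
\item For the remaining connectors, each one produces at most a single edge. Poissonizing the connector set, the multiplicities of edges between distinct pairs become independent Poisson variables with means as in Step 1. This yields a graph that agrees, up to an error of $o(m)$ edges, with an inhomogeneous graph with independent edges and kernel $(1+o(1))\kappa_\mu$. Alternatively, one can invoke directly the clustering or hypergraph extension of Bollob\'as--Janson--Riordan, with cliques of size at least three negligible.
\end{itemize}

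\emph{Step 3: apply the Bollob\'as--Janson--Riordan theorem and check positivity.}
\begin{itemize}
\item The kernel $\kappa_\mu$ is rank one, continuous on $(0,1]^2$, irreducible, and satisfies $\int\!\!\int\kappa_\mu<\infty$ because $\gamma<1$. The graphical condition (convergence of the normalised expected edge number) follows from Step 1 and Riemann-sum approximation.
\item The Bollob\'as--Janson--Riordan theorem then gives
$$\frac{\#C_1}{m}\to\int_0^1\rho(\kappa_\mu,x)\,dx \quad\text{and}\quad \frac{\#C_2}{m}\to0 \quad\text{in probability.}$$
\item For positivity, the integral operator of $\kappa_\mu$ has norm $\mu^{1-2\gamma}(2\gamma-1)^{-1}\int_0^1x^{-2\gamma}\,dx=\infty$, since $2\gamma>1$. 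In particular the norm exceeds $1$, so the branching process is supercritical and $\rho(\kappa_\mu,\cdot)>0$, which gives the strict inequality in the statement.
\end{itemize}
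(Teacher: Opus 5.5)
Your Steps 1 and 3 match the paper's proof. The paper computes $\mathbb P(i\overset{1}{\longleftrightarrow}j)=1-\exp\bigl(-\tfrac1m\kappa_\mu(\tfrac im,\tfrac jm)(1+o(1))\bigr)$ from the independence of the connectors, recognises the rank-one kernel, applies the Bollob\'as--Janson--Riordan giant-component theorem, and gets positivity from $\|T_{\kappa_\mu}\|=\infty$.

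What you add is Step 2. The paper applies that theorem as if the core edges were independent and never deals with the correlations created by shared connectors. Your Step 2 fills this gap, and it is sound. Concretely:
\begin{itemize}
\item With $\lambda_u=\sum_{x\le m}p_{xu}$ you have $\sup_{u>sn}\lambda_u\to0$ and $\sum_u\lambda_u^2=\sum_{x,y\le m}\lambda_{xy}=O(m)$.
\item Hence the core edges produced by connectors with at least three core neighbours have expected number $O(\sum_u\lambda_u^3)$. This is $o(m)$, because $\sum_u\lambda_u^3\le\sup_u\lambda_u\sum_u\lambda_u^2$.
\item For each remaining connector, coupling the indicator of its core-neighbour pair with independent Poisson variables costs $O(\sum_u\lambda_u^4)=o(m)$ edges in expectation.
\item This produces a genuine inhomogeneous random graph, with independent edges and kernels converging to $\kappa_\mu$, that differs from the core graph in $o(m)$ edges with high probability.
\item The stability theorem of Bollob\'as, Janson and Riordan then transfers $\#C_1/m\to\int_0^1\rho(\kappa_\mu,x)\,dx$. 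This theorem says the giant's size moves by at most $\varepsilon m$ when $\delta m$ vertices and edges are deleted or added.
\item The same theorem also gives $\#C_2/m\to0$, since otherwise adding one edge between the two largest components would violate stability.
\end{itemize}

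One claim in Step 1 is wrong as written: $\lambda_{xy}$ is not $o(1)$ uniformly. In this regime $\lambda_{11}\asymp\beta(n)^2n^{2\gamma-1}\to\infty$. So the edge probability is $1-\exp(-(1+o(1))\lambda_{xy})$ with a uniform $o(1)$, as in the paper, rather than $(1+o(1))\tfrac1m\kappa_\mu\wedge1$. This does not hurt the argument. Since $\kappa_\mu$ is integrable and decreasing in each coordinate, pairs with $\lambda_{xy}\ge\delta$ contribute only $o(m)$ to $\sum_{x,y}\lambda_{xy}$, so the graphical-kernel conditions still hold.
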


 \begin{proof}
Let $i,j\in C_m$, then
\begin{align*}
    \mathbb P \big( i\overset{1}{\centernot\longleftrightarrow}  j \big) = & \prod_{x=\lfloor sn \rfloor +1}^n 1 - \beta(n)^2i^{-\gamma}j^{-\gamma}x^{2(\gamma-1)}
     \\ = & \, \exp \Big(\sum_{x=\lfloor sn \rfloor +1}^n \log(1-\beta(n)^2i^{-\gamma}j^{-\gamma}x^{2(\gamma-1)})\Big)
    \\ = & \, \exp \Big( - \frac{\beta(n)^2n^{2\gamma-1}}{2\gamma-1}i^{-\gamma}j^{-\gamma}\big(1+o(1)\big)\Big),
\end{align*}
where we estimated the sum by an integral in the last step. Thus
\begin{equation}\label{rankone}
\begin{aligned} 
     \mathbb P \big( i\overset{1}{\longleftrightarrow}  j \big) = & \, 1- \exp\Big(- \frac{\beta(n)^2n^{2\gamma-1}}{2\gamma-1}i^{-\gamma}j^{-\gamma}\big(1+o(1)\big)\Big) 
     \\ = &  1- \exp\Big(- \frac{1}{m}\, \kappa_\mu\Big( \frac{i}{m}, \frac{j}{m}\Big)\big(1+o(1)\big)\Big) \, .
\end{aligned}
\end{equation}
%{\color{red}Der Fehlerterm von oben ist vermutlich nicht additiv sondern multiplikativ.}{\color{blue}stimmt.}
Note that $ \kappa_\mu(x,y)=\psi(x)\psi(y)$, with $\psi(x)=cx^{-\gamma}$ for a suitable  $c>0$. This is the kernel of an inhomogeneous random graph of rank one. 
By \cite[Theorem 3.1]{bjr} this implies that $\mathcal G_{n,\mu}^{_{(2)}}$ has a component of size of order $m$ as
\begin{align*}
    1<||T_{\kappa_\mu}||^2 = \int_0^1 \psi^2(x) \,\text{d}x = \infty,
\end{align*}
where $T_{\kappa_\mu}$ is the Hilbert-Schmidt operator associated with the integral kernel~$\kappa_\mu$
on $L^2((0,1))$. 
%Thus $\mathcal G_n^{(2)}$ has a giant component for every $\mu>0$. 
More precisely,
$$ \frac{\#C_1\big(\mathcal G_{n,\mu}^{(2)}\big)}{m}\overset{\mathbb P}{\longrightarrow} \int_0^1 \rho({\kappa_\mu}, x) \, dx\quad \text{ and } \quad \frac{\#C_2\big(\mathcal G_{n,\mu}^{(2)}\big)}{m}\overset{\mathbb P}{\longrightarrow}0,$$
where $\int_0^1 \rho({\kappa_\mu},x)\, dx>0$ is the survival probability of a branching process associated with the kernel~$\kappa_\mu$ and started in a vertex of uniform random~type.
%By our construction it is clear, that $C_1\big(\mathcal G_n^{(2)}\big)$ is increasing in $\mu$, as we just add more vertices to the graph and the connection probabilities do not depend on $\mu$.
     \end{proof}
     
We now study the size of the component 
of $C_1(\mathcal G_{n,\mu}^{_{(2)}})$ in the graph 
$\mathscr G_n$. For a lower bound we first argue that the scaled sum over the outdegrees of all vertices $v\in \{1,\ldots, m\}$ in $\mathscr G_n$ (i.e.\ the number of connections to younger vertices) concentrate, and then that the overlap between neighbourhoods of vertices in $C_1(\mathcal G_{n,\mu}^{_{(2)}})$ is negligible.

\begin{lemma}\label{nnc}
Let \smash{$\displaystyle\text{\emph{outdeg}}(i):=\sum_{j=i+1}^n \mathbbm{1}_{\{j\sim i\}}$} and $f(x):=\tfrac{1}{\gamma}x^{-\gamma}$. Then,
%Let $m=\lfloor \mu \,n\,\beta(n)^{\alpha}\rfloor$.
for any $\mu>0$ and $\varepsilon>0$, we have 
    \begin{align*}
        \mathbb P \Big(\,\Big|\frac{1}{sn}\sum_{i\in C_1(\mathcal G_{n,\mu}^{_{(2)}})} \text{\emph{outdeg}}(i)-\frac{\mu^{1-\gamma}}{m}\sum_{i\in C_1(\mathcal G_{n,\mu}^{_{(2)}})}f\big(\frac{i}{m}\big) \Big|> \varepsilon \Big) \longrightarrow 0 \,.
    \end{align*}
\end{lemma}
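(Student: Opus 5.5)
The approach is to remove the randomness of the index set $C_1(\mathcal G_{n,\mu}^{_{(2)}})$ with a bound that holds uniformly over all subsets of $C_m=\{1,\dots,m\}$. The set $C_1(\mathcal G_{n,\mu}^{_{(2)}})$ is determined by edges between $C_m$ and the one-connectors $x>sn$. These same edges make up most of $\text{outdeg}(i)$: the vertices $j\le sn$ carry only a fraction of order $s^\gamma\to0$ of its mean. So we cannot exploit independence. Instead we compare each $\text{outdeg}(i)$ with its expectation and sum the absolute errors over all of $C_m$.

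\emph{Step 1 (mean).} For $i\le m$ and $\beta(n)<1$,
$$\mathbb E\,\text{outdeg}(i)=\beta(n)i^{-\gamma}\sum_{j=i+1}^n j^{\gamma-1}=\tfrac{1}{\gamma}\beta(n)i^{-\gamma}\big(n^\gamma-i^\gamma\big)+O\big(\beta(n)i^{-1}\big),$$
by comparing the sum with an integral. Since $i\le m\ll n$, the relative error $(i/n)^\gamma\le(m/n)^\gamma$ tends to $0$ uniformly in $i\le m$. Summed over $i\le m$ and divided by $sn$, the additive error is $O(\beta(n)\log m/(sn))\to0$.

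Next substitute $i=mx$, with $m=\mu n\beta(n)^\alpha$ and $s=\beta(n)^{\alpha/2}$. Using $1-\alpha\gamma=-\alpha/2$, we get
$$\frac{\beta(n)n^\gamma i^{-\gamma}}{\gamma\, sn}=\frac{\mu^{1-\gamma}}{m}f\big(\tfrac im\big).$$
The error from replacing $m$ by its integer part is negligible. Hence, for every $A\subset C_m$,
$$\Big|\frac1{sn}\sum_{i\in A}\mathbb E\,\text{outdeg}(i)-\frac{\mu^{1-\gamma}}m\sum_{i\in A}f\big(\tfrac im\big)\Big|\le o(1)\cdot\frac{\mu^{1-\gamma}}m\sum_{i\le m}f\big(\tfrac im\big)+o(1).$$
The sum on the right converges to $\mu^{1-\gamma}\int_0^1 f<\infty$, because $\gamma<1$. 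So the right-hand side is $o(1)$ uniformly in $A$.

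\emph{Step 2 (fluctuations, uniform in the subset).} Each $\text{outdeg}(i)$ is a sum of independent Bernoulli variables, so $\Var(\text{outdeg}(i))\le\mathbb E\,\text{outdeg}(i)\lesssim\beta(n)n^\gamma i^{-\gamma}$. By Cauchy--Schwarz,
$$\mathbb E\sum_{i\le m}\big|\text{outdeg}(i)-\mathbb E\,\text{outdeg}(i)\big|\lesssim \sqrt{\beta(n)n^\gamma}\sum_{i\le m}i^{-\gamma/2}\lesssim\sqrt{\beta(n)n^\gamma}\,m^{1-\gamma/2}.$$
Divide by $sn=\beta(n)^{\alpha/2}n$ and insert $m=\mu n\beta(n)^\alpha$. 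The powers of $n$ cancel exactly. What remains is a constant times $\beta(n)^{\frac12+\frac{\alpha(1-\gamma)}2}$, which tends to $0$ because $\beta(n)\to0$. By Markov's inequality, $\frac1{sn}\sum_{i\le m}|\text{outdeg}(i)-\mathbb E\,\text{outdeg}(i)|\to0$ in probability.

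\emph{Step 3 (conclusion).} Apply the triangle inequality with $A=C_1(\mathcal G_{n,\mu}^{_{(2)}})\subset C_m$. The difference in the lemma is bounded by the quantity in Step 2 plus the uniform deterministic bound from Step 1, and both tend to $0$. This gives the claim.

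The main obstacle is the dependence between the random set $C_1$ and the out-degrees, and the subset-uniform $L^1$ bound of Step 2 avoids it. The only delicate points are checking that the exponent of $\beta(n)$ in Step 2 is positive and that the error terms in Step 1 are uniform over $i\le m$.
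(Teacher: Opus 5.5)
Your proof is correct and follows essentially the same route as the paper. Both arguments compute $\mathbb E\,\text{outdeg}(i)$ uniformly in $i\le m$ and bound $\Var(\text{outdeg}(i))$ by the mean. Both then remove the dependence on the random set $C_1(\mathcal G_{n,\mu}^{(2)})$ by summing $\mathbb E\,|\text{outdeg}(i)-\mathbb E\,\text{outdeg}(i)|$ over all $i\le m$ via Cauchy--Schwarz, which gives the same bound $c\,\beta(n)^{\frac12+\frac{\alpha(1-\gamma)}2}=c\,\beta(n)^{\frac{1}{2(2\gamma-1)}}$, and conclude with Markov's inequality.
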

\begin{proof}
   There is $(o_n)$ converging to zero such that, for all $i\le n$,
\begin{align*}
    \mathbb{E}\big[\text{outdeg}(i)\big]=\sum_{j=i+1}^n \beta(n) j^{\gamma-1} i^{-\gamma} = \frac1\gamma \beta(n) n^\gamma i^{-\gamma} +o_n.
\end{align*}
Further, for the variance we have 
\begin{equation}\label{varj}
    \begin{aligned}
    \Var(\text{outdeg}(i)) & = \beta(n)^2\sum_{j=i+1}^n\sum_{l=i+1 \atop l\neq j}^n j^{\gamma-1} l^{\gamma-1}i^{-2\gamma}+ \beta(n)\sum_{j=i+1}^n j^{\gamma-1} i^{-\gamma} \\ & \phantom{xxx} - \bigg( \beta(n) \sum_{j=i+1}^n  j^{\gamma-1}i^{-\gamma}\bigg)^2\\ & = \beta(n)\sum_{j=i+1}^n j^{\gamma-1} i^{-\gamma} -\beta(n)^2 \sum_{j=i+1}^n  j^{2(\gamma-1)} i^{-2\gamma}
    \\ & \le  \frac{1}{\gamma} \beta(n) n^\gamma i^{-\gamma} \;.
\end{aligned}
\end{equation}
Moreover we have
\begin{equation*}
\begin{aligned}
    \frac{1}{sn} \sum_{i\in C_1(\mathcal G_{n,\mu}^{(2)})} \mathbb{E}\big[\text{outdeg}(i)\big] = &\, \frac{1}{n\beta(n)^{\alpha/2}} \sum_{i\in C_1(\mathcal G_{n,\mu}^{(2)})} \frac{1}{{\gamma}}i^{-\gamma}\beta(n) n^{\gamma} +o_n
    \\ = & \, \frac{\mu^{1-\gamma}}{m}\sum_{i\in C_1(\mathcal G_{n,\mu}^{(2)})} f \big(\frac{i}{m }\big)+o_n\,.
\end{aligned}
\end{equation*}
Thus, by Jensen's inequality,
\begin{align*}
       \mathbb E & \Big[\Big|\frac{1}{sn}\sum_{i\in C_1(\mathcal G_{n,\mu}^{_{(2)}})} \text{outdeg}(i)-\frac{\mu^{1-\gamma}}{m}\sum_{i\in C_1(\mathcal G_{n,\mu}^{_{(2)}})}f\big(\frac{i}{m}\big) \Big| \Big]
      \\ & \le  \frac{1}{sn}\sum_{i=1}^m \mathbb E \Big[\big|\text{outdeg}(i)-\mathbb E\big[\text{outdeg}(i)\big]\big|\Big] +o(1)
       \\ & \le  \frac{1}{sn}\sum_{i=1}^m \mathbb E \Big[\big|\text{outdeg}(i)-\mathbb E\big[\text{outdeg}(i)\big]\big|^2\Big] ^{\tfrac{1}{2}} + o(1)
       \\ & \le  \frac{c}{sn} n^{\tfrac{\gamma}{2}}\beta(n)^{\tfrac{1}{2}}m^{1-\tfrac{\gamma}{2}}+o(1) = c \beta(n)^{\tfrac{1}{2(2\gamma-1)}}+o(1) \,.
\end{align*}
The statement now follows by Markov's inequality.
\end{proof}

\begin{lemma}\label{onecon}
We have, for some  $C>0$,
 $$\mathbb E\big[\#\{x\in\{m+1,\dots,n\} \,|\, \exists i,j\le m \,:\,i\sim x\sim j\} \big] \le C \beta(n)^\alpha n.$$
\end{lemma}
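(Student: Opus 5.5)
Since $x\ge m+1$, each $i,j\le m$ is older than $x$, so $\mathbb P(i\sim x)=\beta(n)i^{-\gamma}x^{\gamma-1}$ (valid for small $\beta(n)$, and these events are independent over $i$). The plan is a first-moment bound split into two regimes of $x$.

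\emph{Large $x$.} For each $x$ we bound $\mathbb P(\exists i,j\le m:\, i\sim x\sim j)$ by $\mathbb P(\text{indeg}_{\le m}(x)\ge 2)$. By a union bound over pairs this is at most
$$\Big(\sum_{i=1}^m \beta(n) i^{-\gamma}x^{\gamma-1}\Big)^2 \le C\beta(n)^2 m^{2-2\gamma}x^{2\gamma-2}.$$
Independently of this, the probability is trivially at most $1$.

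\emph{Summing over $x$.} Split at $x_0=\beta(n)^{1/(1-\gamma)}m$, chosen as the point where the expected number of neighbours of $x$ in $\{1,\dots,m\}$, namely $\lambda(x)\asymp \beta(n)m^{1-\gamma}x^{\gamma-1}$, equals one. For $x\le x_0$ the trivial bound contributes at most $x_0$. For $x>x_0$ we use $\lambda(x)^2$. Since $2\gamma-2>-1$, the sum $\sum_{x\le n}x^{2\gamma-2}$ grows like $n^{2\gamma-1}$, so this part contributes at most
$$C\beta(n)^2 m^{2-2\gamma}n^{2\gamma-1}.$$
Inserting $m\le \mu n\beta(n)^\alpha$ gives
$$C\beta(n)^{2+\alpha(2-2\gamma)}n.$$
Since $\alpha=\tfrac{2}{2\gamma-1}$, we have $\alpha(2-2\gamma)+2=\tfrac{4-4\gamma+4\gamma-2}{2\gamma-1}=\alpha$, so this contribution is exactly $C\beta(n)^\alpha n$. (It is dominated by $x$ of order $n$, consistent with the one-connector picture in Lemma~\ref{lem1}.)

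For the small-$x$ part, $x_0=\beta(n)^{1/(1-\gamma)}m$ is $\beta(n)^{1/(1-\gamma)}$ times $O(\beta(n)^\alpha n)$, hence of smaller order. If $x_0<m$ this range is empty anyway. Either way it is bounded by $m\le \mu\beta(n)^\alpha n$.

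\emph{Main obstacle.} The main care point is the exponent bookkeeping, and in particular checking that $\sum_{x>x_0}x^{2\gamma-2}$ is dominated by its upper end because $\gamma>\tfrac12$. Otherwise the argument is a direct first-moment computation, and the constant $C$ depends only on $\gamma$ and $\mu$.
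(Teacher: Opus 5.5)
Your argument is correct and is essentially the paper's: a union bound over pairs $i\neq j\le m$, using independence of the edges at $x$, gives $\sum_{x>m}C\beta(n)^2 m^{2-2\gamma}x^{2\gamma-2}\le C\beta(n)^2m^{2-2\gamma}n^{2\gamma-1}$, and substituting $m\le\mu n\beta(n)^\alpha$ yields $C\beta(n)^\alpha n$. The split at $x_0$ is unnecessary, because $x_0=\beta(n)^{1/(1-\gamma)}m<m$ whenever $\beta(n)<1$, so the small-$x$ range is always empty and the pair bound alone covers every $x>m$.
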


\begin{proof}
We have
 \begin{equation*}
    \begin{aligned}
        \mathbb E\big[\#\{x>m \,|\, \exists i,j\le m \,:\,i\sim x\sim j\} \big] \le & \sum_{x=m+1}^n \sum_{i,j=1}^{m} \beta(n)^2 i^{-\gamma}j^{-\gamma}x^{2(\gamma-1)} 
        \\ \le &\, C  \beta(n)^2 \beta(n)^{2\alpha(1-\gamma)}n = C \beta(n)^\alpha n.
    \end{aligned}
 \end{equation*}
 \ \\[-1cm]
 \end{proof}
 \medskip

We now combine these two lemmas with classical results on inhomogeneous random graphs to obtain the precise asymptotics for the number of direct neighbours of $C_1(\mathcal G_{n,\mu}^{_{(2)}})$ in $\mathscr G_n$. 
%Observe that, for every $\mu>0$, this yields a lower bound on the size of the largest component in $\mathscr G_n$.
\smallskip

\begin{lemma}\label{neigh_asymp}
Let  $f(x)=\tfrac{1}{\gamma}x^{-\gamma}$.
Then
\begin{align}\label{pconv}
     \lim_{n\to\infty} \frac{1}{n\beta(n)^{\alpha/2}}\#\big\{x\in\{1,\dots,n\}\,:\, x \sim C_1(\mathcal G_{n,\mu}^{(2)}) \}  = %\overset{\mathbb P}{\longrightarrow} 
     \mu^{1-\gamma} \int_0^1 f(x) \rho(\kappa_\mu , x)\,\text{d}x\,,
\end{align}
in probability, where $\rho(\kappa_\mu , x)$ is the survival probability of the multitype Galton-Watson tree started in $x$ associated with the kernel $$\kappa_\mu(x,y)= \mu^{1-2\gamma}(2\gamma-1)^{-1}x^{-\gamma}y^{-\gamma}\,.$$
\end{lemma}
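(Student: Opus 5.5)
Write $\mathcal C:=C_1(\mathcal G_{n,\mu}^{(2)})$ and $N:=\#\{x\in\{1,\dots,n\}: x\sim \mathcal C\}$. The plan is to sandwich $N$ between the total outdegree of $\mathcal C$ minus a small overlap term and the total degree of $\mathcal C$. Then I use Lemma~\ref{nnc} to identify the main term, and Lemma~\ref{lem1} together with the theory of rank-one inhomogeneous random graphs to turn the empirical sum over $\mathcal C$ into the integral.

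\emph{Upper bound.} Every $x\sim\mathcal C$ either lies in $\{1,\dots,m\}$ or is a younger neighbour of some $i\in\mathcal C$, so
\[
N\le m+\sum_{i\in\mathcal C}\text{outdeg}(i).
\]
Since $m=O(n\beta(n)^{\alpha})=o(n\beta(n)^{\alpha/2})$, the first term is negligible.

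\emph{Lower bound.} Every $x>m$ with $x\sim i$ for some $i\in\mathcal C$ is counted in $N$. A vertex $x$ contributes more than once to $\sum_{i\in\mathcal C}\text{outdeg}(i)$ only if it is adjacent to two distinct vertices of $\{1,\dots,m\}$. The excess multiplicity is controlled by the number of pairs $i\ne j\le m$ with $i\sim x\sim j$. The computation in Lemma~\ref{onecon} bounds the expectation of this number of pairs by $C\beta(n)^\alpha n=o(n\beta(n)^{\alpha/2})$, not only the number of such $x$. Outdegree edges inside $\{1,\dots,m\}$ give at most $m$ choose 2 times edge probabilities; more simply, the expected number of edges within $\{1,\dots,m\}$ is $O(\beta(n)m)=o(n\beta(n)^{\alpha/2})$. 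By Markov's inequality both corrections vanish after scaling, so
\[
\frac{N}{sn}=\frac{1}{sn}\sum_{i\in\mathcal C}\text{outdeg}(i)+o_{\mathbb P}(1).
\]
By Lemma~\ref{nnc} this equals $\frac{\mu^{1-\gamma}}{m}\sum_{i\in\mathcal C}f(i/m)+o_{\mathbb P}(1)$.

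\emph{Law of large numbers over the giant.} It remains to show
\[
\frac1m\sum_{i\in\mathcal C}f(i/m)\to\int_0^1 f(x)\rho(\kappa_\mu,x)\,dx
\]
in probability. This is the main obstacle, since $f$ is unbounded, so Lemma~\ref{lem1} does not give it directly. Here I would use the type-resolved version of the Bollobás--Janson--Riordan giant-component theorem: for every interval $A\subset(0,1]$ the number of $i\in\mathcal C$ with $i/m\in A$, divided by $m$, converges to $\int_A\rho(\kappa_\mu,x)\,dx$. This follows from \cite{rjb}, since the core graph is an inhomogeneous random graph with edge probabilities \eqref{rankone}, up to a factor $1+o(1)$ handled by monotone coupling with kernels $(1\pm\delta)\kappa_\mu$ and continuity of $\rho$ in the kernel. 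Truncating at level $\eta$, the sum over $i/m\ge\eta$ converges by Riemann-sum approximation with the bounded continuous function $f$ on $[\eta,1]$. For the remainder, deterministically
\[
\frac1m\sum_{i\le\eta m}f(i/m)\le C\eta^{1-\gamma},
\]
and likewise $\int_0^\eta f\rho\le C\eta^{1-\gamma}$ since $\rho\le1$. Letting $n\to\infty$ and then $\eta\to0$ gives the claim. Combining the three steps yields \eqref{pconv}.
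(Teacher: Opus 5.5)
Your proposal is correct and follows the paper's proof. You sandwich the neighbour count by the total outdegree of $C_1(\mathcal G_{n,\mu}^{(2)})$, control the overlap via the computation of Lemma~\ref{onecon}, identify the main term with Lemma~\ref{nnc}, and pass to the integral by the Bollob\'as--Janson--Riordan law of large numbers over the giant. Your bookkeeping (the $+m$ in the upper bound, counting pairs rather than vertices in the overlap, the edges inside the core) is slightly more careful than the paper's, and your truncation at level $\eta$ just unpacks the weighted statement the paper cites as Theorem~9.10 of Bollob\'as--Janson--Riordan.
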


%which is of order $s n$,
%{\color{red}weil $m \times \beta(n) (n/m)^{\gamma} = n\beta(n)^{1/({2\gamma-1)}}$, das stimmt noch.}

\begin{proof} 
We trivially have
$$ \#\big\{x  \in\{1,\dots,n\}\,:\,  x \sim C_1(\mathcal G_{n,\mu}^{(2)}) \}  \le \sum_{i\in C_1(\mathcal G_{n,\mu}^{(2)})} \text{outdeg}(i)$$
and by Lemma~\ref{onecon} we get, with high probability,
\begin{equation*} %\label{nearneig}
    \begin{aligned}
    \#\big\{x & \in\{1,\dots,n\}\,:\, 
    x \sim C_1(\mathcal G_{n,\mu}^{(2)}) \} 
  \\ \ge \,& \sum_{i\in C_1(\mathcal G_{n,\mu}^{(2)})} \text{outdeg}(i) - m - \#\{x>m \colon \exists i,j\le m \,:\,i\sim x\sim j\}
  \\ \ge \, & \sum_{i\in C_1(\mathcal G_{n,\mu}^{(2)})} \text{outdeg}(i) - o(n\beta(n)^{\alpha/2}).
\end{aligned}
\end{equation*}
% (1-\varepsilon) \tfrac1\gamma \beta(n) n^\gamma \sum_{i\in C_1(\mathcal G_n^{(2)})}  i^{-\gamma} - m - C \beta(n)^\alpha n  \ge \, c \beta(n)^{\alpha/2} n,
It therefore suffices to study the asymptotics of
$$\frac{1}{n\beta(n)^{\alpha/2}} \sum_{i\in C_1(\mathcal G_{n,\mu}^{(2)})} \text{outdeg}(i).$$
By Lemma \ref{nnc}, 
\begin{equation}
\label{nearneig2}
    \frac{1}{sn}\sum_{i\in C_1(\mathcal G_{n,\mu}^{_{(2)}})} \text{outdeg}(i) \,= \,\frac{\mu^{1-\gamma}}{m}\sum_{i\in C_1(\mathcal G_{n,\mu}^{_{(2)}})}f\big(\frac{i}{m}\big) +o(1)\quad \text{in probability.}
\end{equation}
As in Lemma~\ref{lem1} we interpret $C_1(\mathcal G_{n,\mu}^{(2)})$ as the largest connected component of an inhomogeneous random graph with vertex set $\{1,\dots,m\}$, where vertex $i$ has type $x_i=i/m$ and kernel given in (\ref{kappamu}). Furthermore
$$  \frac{1}{m}\sum_{i=1}^m f \big(x_i\big) \to \int_0^1 f(x) \,\text{d}x<\infty.$$
Thus \cite[Theorem 9.10]{rjb} is applicable and yields
\begin{align*}
    \frac{1}{m}\sum_{i\in C_1(\mathcal G_{n,\mu}^{(2)})} f \big( x_i)  {\longrightarrow} \int_0^1 f(x) \rho(\kappa_\mu , x)\,\text{d}x\qquad \text{ in probability.}
\end{align*}
Plugging this into (\ref{nearneig2})  gives us the statement.
\end{proof}

Observe that, for every $\mu>0$, the previous lemma yields a lower bound on the size of the largest component in $\mathscr G_n$. As this bound is increasing in $\mu$ we obtain the optimal lower bound in the limit $\mu\to\infty$. 

\begin{lemma}\label{mu}
    We have
    $$ \lim_{\mu\to\infty} \mu^{1-\gamma} \int_0^1 f(x) \rho(\kappa_\mu , x)\,\text{d}x =: \theta >0\,.$$
\end{lemma}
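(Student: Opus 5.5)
The idea is to use the rank-one structure of $\kappa_\mu$ to reduce the survival probability to a single scalar fixed-point equation, and then to find its asymptotics by a rescaling in which $\mu$ drops out in the limit. Write $\kappa_\mu(x,y)=\psi_\mu(x)\psi_\mu(y)$ with $\psi_\mu(x)=a\,\varepsilon\, x^{-\gamma}$, where $a:=(2\gamma-1)^{-1/2}$ and $\varepsilon:=\mu^{(1-2\gamma)/2}$. Since $\gamma>\frac12$ we have $\varepsilon\to0$ as $\mu\to\infty$.

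\emph{Step 1: reduction to a scalar equation.} For a Poisson multitype branching process with a rank-one kernel, the offspring of an individual of type $x$ is a Poisson process with intensity $\psi_\mu(x)\psi_\mu(y)\,dy$. The extinction equation therefore gives
$$\rho(\kappa_\mu,x)=1-\exp\big(-\psi_\mu(x)\,c_\mu\big),\qquad c_\mu:=\int_0^1\psi_\mu(y)\rho(\kappa_\mu,y)\,dy .$$
The constant $c_\mu$ solves $c=\int_0^1\psi_\mu(y)\big(1-e^{-\psi_\mu(y)c}\big)\,dy$. By \cite{rjb}, $\rho(\kappa_\mu,\cdot)$ is the maximal solution of the extinction equation, and it is nonzero because $\|T_{\kappa_\mu}\|=\infty$, as noted in Lemma~\ref{lem1}. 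Hence $c_\mu>0$. The right-hand side is concave in $c$, so $c_\mu$ is in fact the unique positive root. Moreover $\rho\le1$ gives $c_\mu\le\int_0^1\psi_\mu=a\varepsilon/(1-\gamma)$, so $\varepsilon c_\mu\to0$.

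\emph{Step 2: rescaling.} Set $\lambda:=(a\varepsilon c_\mu)^{1/\gamma}$. By Step 1, $\lambda\to0$. Substituting $y=\lambda t$ turns the fixed-point equation into
$$c_\mu=a\varepsilon\,\lambda^{1-\gamma}\,G(\lambda),\qquad G(\lambda):=\int_0^{1/\lambda}t^{-\gamma}\big(1-e^{-t^{-\gamma}}\big)\,dt .$$
Multiplying by $a\varepsilon$ and using $a\varepsilon c_\mu=\lambda^\gamma$ gives $\lambda^{2\gamma-1}=a^2\varepsilon^2G(\lambda)$, which is
$$(\mu\lambda)^{2\gamma-1}=a^2\,G(\lambda).$$
The integrand of $G$ behaves like $t^{-\gamma}$ near $0$ and like $t^{-2\gamma}$ near $\infty$. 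Since $\frac12<\gamma<1$, both are integrable, so $G(\lambda)\uparrow I:=\int_0^\infty t^{-\gamma}(1-e^{-t^{-\gamma}})\,dt\in(0,\infty)$. Consequently $\mu\lambda\to K:=(a^2 I)^{1/(2\gamma-1)}>0$.

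\emph{Step 3: the limit.} Using the same substitution $x=\lambda t$,
$$\mu^{1-\gamma}\int_0^1 f(x)\rho(\kappa_\mu,x)\,dx=\frac{\mu^{1-\gamma}}{\gamma}\int_0^1x^{-\gamma}\big(1-e^{-(\lambda/x)^{\gamma}}\big)dx=\frac{(\mu\lambda)^{1-\gamma}}{\gamma}\,G(\lambda).$$
This converges to $\theta=\frac1\gamma K^{1-\gamma}I>0$.

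The main obstacle is Step 1: justifying rigorously that $\rho(\kappa_\mu,\cdot)$ has the displayed exponential form with a strictly positive $c_\mu$. The kernel is unbounded at $0$, so I would invoke the general characterisation in \cite{rjb} of $\rho$ as the maximal fixed point of $1-e^{-T_\kappa\rho}$. It is the finiteness of $\int_0^1\psi_\mu$, which holds because $\gamma<1$, that makes $T_{\kappa_\mu}\rho=\psi_\mu c_\mu$ well defined. Once Step 1 is in place, the remaining steps only use dominated and monotone convergence.
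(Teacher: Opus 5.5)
Your argument is correct. It shares the paper's starting point: both write $\kappa_\mu=c\,\psi(x)\psi(y)$ with $c=\mu^{1-2\gamma}/(2\gamma-1)$ and $\psi(x)=x^{-\gamma}$, and both use $f=\psi/\gamma$. From there the routes differ.

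\textbf{The paper's route.} It rewrites the quantity as $\frac{\mu^{1-\gamma}}{c\gamma}\rho(c)$, where $\rho(c)=c\int_0^1\psi\,\rho(\kappa_\mu,\cdot)$. It then imports the small-$c$ asymptotics $\rho(c)\sim C_1c^{\gamma/(2\gamma-1)}$ from the unnumbered display after (16.17) in \cite{rjb}, after which the $\mu$-dependence cancels. Existence of the limit is obtained separately, from the monotonicity in $\mu$ of the probabilistic limit in \eqref{pconv}.

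\textbf{Your route.} You derive the needed asymptotics yourself. The rank-one structure reduces the extinction equation to a scalar equation for $c_\mu$. The substitution $y=\lambda t$ then turns it into $(\mu\lambda)^{2\gamma-1}=a^2G(\lambda)$. The condition $\gamma\in(\frac12,1)$ is exactly what makes $I=\int_0^\infty t^{-\gamma}(1-e^{-t^{-\gamma}})\,dt$ finite.

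\textbf{What each approach buys.} Your proof is self-contained. Its only external input is the general fact from \cite{rjb} that $\rho(\kappa,\cdot)$ is the maximal fixed point of $1-e^{-T_\kappa\rho}$, which together with $\int_0^1\psi^2=\infty$ gives $c_\mu>0$. It yields the existence of the limit directly, with no monotonicity argument and no need to match normalisations with \cite[(16.12)]{rjb}. It also gives $\theta$ explicitly as $\theta=\gamma^{-1}(2\gamma-1)^{-\frac{1-\gamma}{2\gamma-1}}I^{\frac{\gamma}{2\gamma-1}}$, and an integration by parts shows $I=\Gamma(2-1/\gamma)/(1-\gamma)$. The paper instead leaves the constant $C_1$ unspecified. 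The paper's version is shorter, but it rests on correctly identifying its $\rho(c)$ with the quantity studied in \cite{rjb}.
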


\begin{proof}
   The existence of the limit follows from the monotonicity of the expression in~\eqref{pconv} as a function of $\mu$. The kernel $\kappa_\mu$ is of rank one and can therefore be expressed as 
   $\kappa_\mu(x,y) = c \psi(x)\psi(y)$
   where 
   $$c= \frac{\mu^{1-2\gamma}}{2\gamma-1}\;\text{ and }\;\psi(x)=x^{-\gamma}\,.$$
   Note that $f/\psi=1/\gamma$ is just a constant. Thus we can write 
   \begin{align} \label{feqpsi}   
   \mu^{1-\gamma} \int_0^1 f(x) \rho(\kappa_\mu , x)\,\text{d}x =\frac{\mu^{1-\gamma}}{c\gamma} \underbrace{c\int_0^1 \psi(x) \rho(\kappa_\mu , x)\,\text{d}x}_{=:\rho(c)}\,. \end{align}
   The expression $\rho(c)$ is closely related to the size of the giant component of the rank-one graph and agrees with \cite[Equation (16.12)]{rjb}, where the asymptotic behaviour as $c\to 0$ is discussed. In our case this corresponds to the limit $\mu\to\infty$. The, unfortunately unnumbered, equation after (16.17) in \cite{rjb} gives, for some $C_1>0$, that
   \begin{align}\label{rhoc}
       \rho(c) \sim C_1 c^{\tfrac{1}{2-1/\gamma}} \text{as } c\to 0\,.
   \end{align}
   Plugging this into (\ref{feqpsi}) we find 
   \begin{align*}
        \mu^{1-\gamma} \int_0^1 f(x) \rho(\kappa_\mu , x)\,\text{d}x\sim\,&\frac{\mu^{1-\gamma}}{c\gamma}C_1 c^{\tfrac{1}{2-1/\gamma}}
         =: \theta ,
        %\mu^\gamma \mu^{\tfrac{1-2\gamma}{2-1/\gamma}} = C_2\,.
   \end{align*}
   where $\theta\in(0,\infty)$ depends on $\gamma$, but not on $\mu$.
\end{proof}

Summarizing, $\theta n \beta(n)^{\alpha/2}$ as defined in Lemma~\ref{mu} is an asymptotic lower bound on the size of the largest component in $\mathscr G_n$. 
%\smallskip
We now look at the component in $\mathscr G_n$ of the largest component in the core graph $\mathcal G_{n,\mu}^{_{(2)}}$. We will show in Lemma~\ref{uppersuper} that its size matches this lower bound asymptotically for large $\mu$, in particular it has size of order $n \beta(n)^{\alpha/2}$. This proof requires several steps of preparation.%
\smallskip

 We keep $\mu$ and thus $m$ fixed throughout the proof and first analyze the structure of paths that connect vertices to the core graph $\mathcal G_{n,\mu}^{_{(2)}}$. We first show in Lemma~\ref{pathlength} that there is a positive integer $k$, depending only on $\gamma$, such that the set of vertices connected to the core by a
\begin{itemize}
    \item shortest path longer than $k$, or
    \item a shortest path of even length,
\end{itemize} has size of smaller order than $n\beta(n)^{\alpha/2}$.  For the proof we recall the following technical tool from \cite[Lemma 1]{waw}. Note that this was also proved earlier in \cite{dmm}, but without taking care of the exact constants, which in our application are relevant as they depend on $\beta$ and therefore on~$n$. 

    \begin{lemma} \label{1.3}
		Let $\mu \colon\{1,\dots,n\}\to[0,\infty)$ be a function satisfying, for some $\psi, \phi >0$ and $s_1\in(0,1)$, that
		\begin{align*}
			\mu(x)\le \mathbbm{1}_{\{x\ge s_1 n\}}\psi x^{\gamma-1}+\phi x^{-\gamma}
            \qquad
		\text{for all $x\in\{1,\dots,n\}$. }
        \end{align*}
        Then, for any $s_2\le s_1 $,  we have that
		\begin{align*}
			\sum_{s_2 n<y\le n}\mu(y) p_{xy} & \le \mathbbm{1}_{\{x >s_2 n\}}\beta(n) \Big(\phi \,\frac{(s_2 n)^{1-2\gamma}}{2 \gamma -1}+\psi\log\left(\tfrac{1}{s_2}\right)\Big) x^{\gamma-1}\\& \phantom{xxxx} +\beta(n) \Big(\phi \log\left(\tfrac{1}{s_2}\right)+\psi\,\frac{n^{2 \gamma -1}}{2 \gamma -1}\Big) x^{-\gamma}.
		\end{align*}
	\end{lemma}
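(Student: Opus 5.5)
The bound is proved by splitting $\sum_{s_2n<y\le n}\mu(y)p_{xy}$ into the two summands of the hypothesis on $\mu$ and, within each, into the ranges $y<x$ and $y>x$. For $\beta(n)<1$ we have $p_{xy}=\beta(n)(x\wedge y)^{-\gamma}(x\vee y)^{\gamma-1}$, so in each piece the $x$-dependence factors out. What remains is a sum of a power of $y$, which we compare with an integral. Throughout we use $\frac12<\gamma<1$, so that $2\gamma-1>0$ and $1-2\gamma<0$.

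First, the $\phi$-term $\phi y^{-\gamma}$.

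For $y>x$ the contribution is $\beta(n)\phi x^{-\gamma}\sum_{y>x\vee s_2n} y^{-1}$. Since $\sum_{s_2n<y\le n} y^{-1}\le \log(1/s_2)$ (comparing with $\int_{s_2n}^n dy/y$), this gives $\beta(n)\phi\log(1/s_2)\,x^{-\gamma}$.

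For $s_2n<y<x$, which can only happen when $x>s_2n$, the contribution is $\beta(n)\phi x^{\gamma-1}\sum_{s_2n<y<x} y^{-2\gamma}$. The summand is decreasing, so the sum is at most $\int_{s_2n}^\infty y^{-2\gamma}\,dy=\frac{(s_2n)^{1-2\gamma}}{2\gamma-1}$. This produces the indicator term with coefficient $\phi\frac{(s_2n)^{1-2\gamma}}{2\gamma-1}$.

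Second, the $\psi$-term $\mathbbm 1_{\{y\ge s_1n\}}\psi y^{\gamma-1}$. We may drop the indicator, since $s_2\le s_1$ means this only enlarges the sum.

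For $y>x$ the contribution is $\beta(n)\psi x^{-\gamma}\sum_{y\le n} y^{2\gamma-2}$. The function $y^{2\gamma-2}$ is decreasing, so this sum is at most $\int_0^n y^{2\gamma-2}\,dy=\frac{n^{2\gamma-1}}{2\gamma-1}$.

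For $s_2n<y<x$ the contribution is $\beta(n)\psi x^{\gamma-1}\sum_{s_2n<y<x} y^{-1}$, and the sum is again at most $\log(1/s_2)$.

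Collecting the four pieces gives exactly the claimed bound. The $x^{\gamma-1}$ terms carry the indicator $\mathbbm 1_{\{x>s_2n\}}$, because they arise only from the range $s_2n<y<x$. The diagonal $y=x$ is absent, since $p_{xx}$ does not occur.

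The only point needing care is the sum-versus-integral comparison with the exact constants and no $(1+o(1))$ factors. In the $\log$ bounds the lower limit must be handled correctly: for decreasing $g$ we have $\sum_{a<y\le b}g(y)\le\int_a^b g$, where $y$ ranges over integers. If $\beta(n)\ge1$ occurs, we simply use $p_{xy}\le\beta(n)(x\wedge y)^{-\gamma}(x\vee y)^{\gamma-1}$ instead, and the same argument goes through.
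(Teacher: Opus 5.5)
Your splitting into the $\phi$- and $\psi$-parts of the bound on $\mu$, each over $y<x$ and $y>x$, is the natural argument. The paper gives no proof of its own here; it quotes the lemma from \cite{waw}. The algebra in all four pieces is right.

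\textbf{The gap.} The problem lies exactly in the step you single out as needing care. For decreasing $g$ and summation over integers $y$, what is true is $\sum_{a<y\le b}g(y)\le\int_{\lfloor a\rfloor}^{b}g$. This gives $\int_a^b g$ only when $a\in\mathbb Z$. For non-integer $a$ your inequality is false: with $a=\tfrac12$, $b=1$, $g(y)=1/y$ the sum is $1$ while $\int_a^b g=\log 2$. Hence your bounds $\sum_{s_2n<y\le n}y^{-1}\le\log(1/s_2)$ and $\sum_{s_2n<y<x}y^{-2\gamma}\le (s_2n)^{1-2\gamma}/(2\gamma-1)$ are justified only when $s_2n\in\mathbb N$.

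This cannot be repaired inside the statement, because with the constants as written the lemma itself fails for non-integer $s_2n$. Take $n=2$, $s_1=s_2=\tfrac34$, $x=1$, $\mu(y)=\phi y^{-\gamma}$ and $\beta(n)<1$. The left-hand side is $\mu(2)p_{12}=\beta(n)\phi/2$. The indicator term vanishes since $1<\tfrac32$, so the right-hand side is $\beta(n)\big(\phi\log\tfrac43+\psi\,\tfrac{2^{2\gamma-1}}{2\gamma-1}\big)$, which is smaller once $\psi$ is small.

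\textbf{What your argument does prove.} It proves the lemma for $s_2n\in\mathbb N$. This covers the applications with threshold $s_2n=m$ in the critical window and in Lemma~\ref{disj}. For general $s_2$ with $s_2n\ge 1$, it proves the bound with $s_2n$ replaced by $\lfloor s_2n\rfloor$. Since $\lfloor s_2n\rfloor\ge s_2n/2$, this costs an additive $\log 2$ in the logarithms and a factor $2^{2\gamma-1}$ in front of $(s_2n)^{1-2\gamma}$. That loss is harmless in Lemma~\ref{pathlength}, where $s_2\to0$ and only $O(\cdot)$ bounds are used. You should state this restriction explicitly rather than claim the sum--integral inequality for arbitrary $a$.

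\textbf{Two minor points.}
\begin{itemize}
\item The diagonal is not absent in the way the lemma is used. The recursion \eqref{recurmu} sums over all $y$, including $y=x$, with weight $\beta(n)x^{-1}$. It is absorbed by putting $y=x$ into the range $y\ge x$ in both parts. Then $\phi x^{-\gamma}\cdot\beta(n)x^{-1}$ is one more term of $\beta(n)\phi x^{-\gamma}\sum_{s_2n<y\le n}y^{-1}$. Likewise $\psi x^{\gamma-1}\cdot\beta(n)x^{-1}=\beta(n)\psi x^{-\gamma}x^{2\gamma-2}$ is one more term of $\beta(n)\psi x^{-\gamma}\sum_{y\le n}y^{2\gamma-2}$. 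Your bounds already cover these full ranges.
\item Dropping the indicator $\mathbbm{1}_{\{y\ge s_1n\}}$ enlarges the sum whether or not $s_2\le s_1$. That condition plays no role in the inequality; it only matters when the lemma is iterated.
\end{itemize}
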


\begin{lemma}\label{pathlength}
Let
$\mathcal{N}_k(\mathcal G_{n,\mu}^{_{(2)}})$ the set of vertices $x\in\{m+1,\ldots,n\}$, which are connected to $\mathcal G_{n,\mu}^{_{(2)}}$ by a shortest path of length at least $k$.  Then for any $k\ge\big\lfloor\tfrac{1-\gamma}{2\gamma-1}\big\rfloor +1  $ we have 
$$\frac{\#\mathcal{N}_{2k+1}(\mathcal G_{n,\mu}^{(2)})}{n\beta(n)^{\alpha/2}}\overset{\mathbb P}{\longrightarrow }0\qquad \text{ in probability.}$$
Also, all vertices at even distances are negligible, that is
$$\frac{1}{n\beta(n)^{\alpha/2}}\sum_{k=1}^\infty \#\big(\mathcal{N}_{2k}(\mathcal G_{n,\mu}^{(2)})\setminus \mathcal{N}_{2k+1}(\mathcal G_{n,\mu}^{(2)})\big)
\longrightarrow 0\qquad \text{ in probability.}$$
In particular if $\gamma>\tfrac23$, then
$$\frac{\#\mathcal{N}_2(\mathcal G_{n,\mu}^{(2)})}{n\beta(n)^{\alpha/2}}\longrightarrow 0\qquad \text{ in probability.}$$
    
\end{lemma}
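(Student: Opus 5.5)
Throughout, write $\beta=\beta(n)$, $m=\lfloor\mu n\beta^\alpha\rfloor$ and $sn=\beta^{\alpha/2}n$. The plan is a first-moment path-counting argument. A vertex at graph distance $\ell$ from the core $\{1,\dots,m\}$ is reached by a self-avoiding path $x_0\sim x_1\sim\dots\sim x_\ell$ with $x_0\le m$ and $x_1,\dots,x_\ell>m$. So $\mathbb E\#\mathcal N_\ell$ is at most the sum over such paths of $\prod_i p_{x_{i-1}x_i}$. We evaluate this sum recursively using Lemma~\ref{1.3}.

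\textbf{Step 1: the recursion.} Define
\[
\mu_\ell(y)=\sum_{\text{paths of length }\ell\text{ from the core ending at }y}\ \prod_i p_{x_{i-1}x_i}.
\]
For the first step, $\mu_1(y)=\sum_{i\le m}\beta i^{-\gamma}y^{\gamma-1}\asymp \beta m^{1-\gamma}y^{\gamma-1}$. Thus $\psi_1=c\beta m^{1-\gamma}$, $\phi_1=0$, and we take $s_1=s_2=m/n$. Lemma~\ref{1.3} then shows that the pair $(\psi,\phi)$ evolves by the linear map
\[
\psi'=\beta\Big(\phi\tfrac{m^{1-2\gamma}}{2\gamma-1}+\psi\log\tfrac nm\Big),\qquad
\phi'=\beta\Big(\phi\log\tfrac nm+\psi\tfrac{n^{2\gamma-1}}{2\gamma-1}\Big).
\]
Here $\beta\log(n/m)=O(\beta\log\frac1\beta)\to0$, and both $\beta^2m^{1-2\gamma}$ and $\beta^2 n^{2\gamma-1}$ are computed from $\beta^\alpha=\beta^{2/(2\gamma-1)}$. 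Summing $\mu_\ell(y)$ over $y$, the total mass is
\[
\sum_y \mu_\ell(y)\lesssim \psi_\ell n^{\gamma}+\phi_\ell m^{1-\gamma}.
\]

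\textbf{Step 2: orders of magnitude.} Two steps of the map multiply $\psi$ by roughly
\[
\beta^2 m^{1-2\gamma}\cdot\frac{n^{2\gamma-1}}{(2\gamma-1)^2}=\frac{\mu^{1-2\gamma}}{(2\gamma-1)^2},
\]
which is of constant order. So naive counting over odd lengths gives contributions $\asymp n\beta^{\alpha/2}$ at every odd length. These are exactly the paths through one-connectors back into the core, and they are not shortest paths. Hence the crude bound is insufficient, and we must exploit that the path is \emph{shortest} and leaves the core after $x_0$.

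The fix is to restrict intermediate vertices. A shortest path to the core cannot revisit $\{1,\dots,m\}$, and in the dangerous "young–old" alternation the old vertices lie in $(m,sn]$. Redoing Lemma~\ref{1.3} with old vertices constrained to $y>m$, and young vertices to $y>sn$ at odd positions, replaces $m^{1-2\gamma}$ by $m^{1-2\gamma}$ only through the tail $y>m$ with the $\phi$-part starting at zero. The $\log$-terms $\beta\log(1/\beta)$ then produce a factor $\beta^{c}$ for each additional pair of steps, where $c$ is determined by $\gamma$ after comparing $\beta^2 n^{2\gamma-1}$ with the powers of $m$. We choose $k$ so that the accumulated power of $\beta$ beats $\beta^{\alpha/2}$ after $2k+1$ steps; the threshold $k>\frac{1-\gamma}{2\gamma-1}$ should come out of matching exponents $\beta^{1+\dots}$ against $\beta^{\alpha/2}=\beta^{1/(2\gamma-1)}$. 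Markov's inequality then gives the first claim. Summing a geometric tail of lengths is harmless, since the map has norm less than one in the restricted form.

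\textbf{Step 3: even lengths.} A path of even length from the core ends, by parity, at an "old-type" vertex, whose weight is $\phi_\ell y^{-\gamma}$. The total mass is then
\[
\phi_\ell\sum_{y>m}y^{-\gamma}\asymp\phi_\ell n^{1-\gamma}.
\]
For even $\ell$, $\phi_\ell$ carries an extra factor $\beta n^{2\gamma-1}\psi$ compared with $\psi_\ell$, and one checks that this yields an extra power $\beta^{\varepsilon}$ relative to the odd case. Summing over all even lengths gives $o(n\beta^{\alpha/2})$. The case $\gamma>\frac23$ is the instance $k=1$ of the first claim, combined with the even-length bound.

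\textbf{Main obstacle.} The hardest part is step 2: making precise that \emph{shortest} paths avoid the constant-order "return through a one-connector" contribution. This needs a careful split of vertices into old ($\le sn$) and young ($>sn$), and a two-type application of Lemma~\ref{1.3} with $s_2=m/n$ versus $s_1=s$. If that bookkeeping fails, I would fall back to a BK-type inequality to separate the core-connected part of the path.
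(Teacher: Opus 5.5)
\textbf{Verdict.} There is a genuine gap, in Step~2, and it is not a bookkeeping problem. The obstruction you ran into is real: for fixed $\mu$ it cannot be argued away.

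\textbf{Where Step~2 fails.} Write $\beta=\beta(n)$ as you do. Your computation that a pair of steps multiplies $\psi$ by $\beta^2m^{1-2\gamma}n^{2\gamma-1}/(2\gamma-1)^2=\mu^{1-2\gamma}/(2\gamma-1)^2$ is correct. The diagnosis is not. Your recursion already forces $x_1,\dots,x_\ell>m$, so no path returns to the core. The constant-order mass sits on alternating paths whose old vertices lie just above $m$, in $(m,Cm]$, and these are typically \emph{shortest} paths. Take a vertex $y\in(m,2m]$:
\begin{itemize}
\item it has a one-connector to $\{1,\dots,m\}$ with probability $\approx 1-\exp(-c\,\mu^{1-2\gamma}(m/y)^{\gamma})$, which is bounded away from $0$;
\item it is adjacent to the core only with probability $O(\beta)$;
\item it has $\approx\frac1\gamma\beta n^\gamma y^{-\gamma}\to\infty$ neighbours above $sn$, and only a vanishing fraction of these are adjacent to the core.
\end{itemize}
Summing over $y$, the number of vertices at distance exactly three from the core is of order $\beta n^\gamma m^{1-\gamma}=\mu^{1-\gamma}n\beta^{\alpha/2}$. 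Iterating the same mechanism gives order $n\beta^{\alpha/2}$ at every fixed odd distance.

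So no restriction based on shortestness can produce the decay you need. Your ``restricted'' recursion changes nothing; as written it ``replaces $m^{1-2\gamma}$ by $m^{1-2\gamma}$''. The $\beta\log(1/\beta)$ terms are of lower order, and the threshold $\frac{1-\gamma}{2\gamma-1}$ is never actually derived. A smaller slip: in Step~1 the $\phi$-part of the total mass is $\phi_\ell n^{1-\gamma}$, not $\phi_\ell m^{1-\gamma}$.

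\textbf{The paper's route, and why it does not help.} The paper counts paths from the far endpoint $x_0$ and stops them at their first entrance into the larger set $\{1,\dots,\tilde sn\}$, with $\tilde s=\beta^r$ and $\frac1{2\gamma-1}<r<\alpha$. The two-step factor then becomes $\beta^2\tilde s^{1-2\gamma}\to0$. Balancing this against the endpoint factor $\tilde s^{1-\gamma}$ gives $k>\frac{1-\gamma}{2\gamma-1}$, and the $\tilde sn=o(n\beta^{\alpha/2})$ vertices of $\{m+1,\dots,\tilde sn\}$ are discarded. This does not close your gap. The reduction \eqref{13} removes only those vertices themselves. It does not remove vertices whose shortest path to the core enters $\{m+1,\dots,\tilde sn\}$ before step $k$, and the distance-three vertices above enter it after one step (at $y\le 2m\ll\tilde sn$).

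\textbf{What can actually be proved.} For fixed $\mu$ the odd-distance assertions do not hold; in particular $\#\mathcal N_2=o(n\beta^{\alpha/2})$ fails for $\gamma>\frac23$. This is consistent with the paper's lower bound $\mu^{1-\gamma}\int_0^1f(x)\rho(\kappa_\mu,x)\,dx$ being sharp only as $\mu\to\infty$. The component of $C_1(\mathcal G^{(2)}_{n,\mu})$ contains all direct neighbours of $C_1(\mathcal G^{(2)}_{n,M})$ for $M>\mu$. A positive proportion of these are neighbours of vertices in $(m,\lfloor M\beta^{\alpha}n\rfloor]$ that are not adjacent to $\{1,\dots,m\}$.

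What your Step~1 recursion does give is the following. For $\mu\ge\mu_0(\gamma)$, so that $\mu^{1-2\gamma}<(2\gamma-1)^2$ and the series over path lengths converges,
\[
\limsup_n\mathbb E[\#\mathcal N_{2k+1}]/(n\beta^{\alpha/2})\le C(\gamma,k)\,\mu^{(1-\gamma)-k(2\gamma-1)}.
\]
The right-hand side vanishes as $\mu\to\infty$ exactly when $k>\frac{1-\gamma}{2\gamma-1}$. Combined with your even-length bound, which gains a factor $O(\beta\log\frac1\beta)$, this double-limit statement is the version that can actually be established.
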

\begin{proof}[Proof of Lemma~\ref{pathlength}]
The proof uses a path counting argument. Recall $m, s$ from \eqref{mmudef} %Let $m=\lfloor \mu \,n\,\beta(n)^{\alpha}\rfloor$, 
and let $\tilde s=\tilde s(n)$ to be determined, but such that $m \le \tilde sn \ll sn $.
Let $X_l$ be the number of paths $x_0x_1\cdots x_{l-1}x_l$ of length $l$, which satisfy the following conditions 
\begin{enumerate}
    \item[(a)] If $l=2$, then $x_0,x_1>m$ and $x_2 \le m$.
    \item[(b)] If $l\ge 3$, then $x_k>\tilde sn$ for all $k<l$ and $x_l\le \tilde sn$. 
\end{enumerate}
Then we have the following upper bound 
\begin{align} \label{13}
   \frac{1}{sn}\#\mathcal{N}_k(\mathcal G_{n,\mu}^{(2)}) \le    \frac{1}{sn}\sum_{l\ge k} X_l + o(1),
\end{align}
using that the total number of vertices in $\{m+1,\ldots, \tilde{s}n\}$ is $o(sn)$.
It remains to show that the other terms in (\ref{13}) go to zero in probability. We have
\begin{align*}
    \mathbb{E}[X_2]=&\sum_{i=1}^m \sum_{x,y=m+1}^n \beta(n)^2 i^{-\gamma}x^{\gamma-1}(x\wedge y)^{-\gamma}(x\vee y)^{\gamma-1}
    \\ \le & \sum_{i=1}^m \sum_{x,y=m+1}^n \beta(n)^2 i^{-\gamma}\big(x^{-1}y^{\gamma-1}+y^{-\gamma}x^{2(\gamma-1)}\big)
   \\ \le &  C\beta(n)^2 m^{1-\gamma}\big(\log(\tfrac{n}{m})n^{\gamma}+n^{\gamma}\big)
    \le   C n \beta(n)^{\gamma\alpha}\log(\beta(n)^{-\alpha}) = o(sn)\,,
\end{align*}
as $\gamma>\tfrac12$. 
For $l\ge 3$ we look at the expectation
\begin{align*}
	     \mathbb{E}[X_l]&\le \sum_{0<x_l\le \tilde sn} \; \sum_{\tilde sn<x_{l-1}\le n} \cdots \sum_{\tilde sn<x_{0}\le n} \prod_{i=0}^{l-1} \beta(n)(x_i \vee x_{i+1})^{\gamma-1}(x_i \wedge x_{i+1})^{-\gamma} \; .
	\end{align*}
	Note that in the above we would actually get an equality if we restrict the summation variables to be distinct.
	We define, for all $k\in\{1,\dots,l\}$,
	\begin{align*}
		\mu_k(x_k):=\sum_{\tilde sn<x_{k-1}\le n} \cdots \sum_{\tilde sn<x_{0}\le n} \prod_{i=0}^{k-1} \, \beta(n)(x_i \vee x_{i+1})^{\gamma-1}(x_i \wedge x_{i+1})^{-\gamma}.
	\end{align*}
	This is a recursion, since
\begin{align} \label{recurmu}
  \mu_{k+1}(x)=\sum_{\tilde sn<y\le n}\mu_k(y) \beta(n) (x \vee y)^{\gamma-1}(x \wedge y)^{-\gamma} \quad \text{ for all } k\ge1 \; .
\end{align}
Thus we can write for the expectation
\begin{align} \label{14}
		\mathbb{E}[X_l]&\le \sum_{0<x_l\le \tilde sn} \mu_l(x_l)
\end{align}
and apply Lemma \ref{1.3}. For the involved variables  we choose $s_1=s_2=\tilde s =\beta(n)^{r}$, with $r\in(\tfrac{1}{2\gamma-1},\tfrac{2}{2\gamma-1}]$. This guarantees $m\le \tilde s n \ll sn$.
 First we have for the starting value of our recursion
\begin{align*}
    \mu_1(x_1)= & \sum_{\tilde sn<x_{0}\le n} \beta(n)(x_0 \vee x_{1})^{\gamma-1}(x_0 \wedge x_{1})^{-\gamma}
		\\ \le & \mathbbm{1}_{\{x_1 >\tilde sn\}}\frac{\beta(n) n^{1-\gamma}}{1-\gamma}\, x_{1}^{\gamma-1}+\frac{\beta(n) n^{\gamma}}{\gamma}\, x_{1}^{-\gamma} \;.
\end{align*}
Thus we define $\psi_1=\tfrac{\beta(n) n^{1-\gamma}}{1-\gamma}$ and $\phi_1=\tfrac{\beta(n) n^{\gamma}}{\gamma}$. Then, using Lemma \ref{1.3}, we define recursively for all $k\ge1$,
\begin{equation}
    \label{psiphi}
\begin{aligned}
		\psi_{k+1}&=\phi_k  \beta(n)\frac{(\tilde s n)^{1-2\gamma}}{2\gamma-1}+\psi_k \beta(n) \log\left(\tfrac{1}{\tilde s}\right),
		\\ \phi_{k+1}&=\phi_k \beta(n) \log\left(\tfrac{1}{\tilde s}\right)+\psi_k \beta(n) \frac{n^{2 \gamma -1}}{2\gamma-1} \;.
\end{aligned}
\end{equation}
From this definition one can see immediately
\begin{align*}
    \psi_2 =O\Big(\beta(n)^{2}\tilde s ^{1-2\gamma} n^{1-\gamma}  \Big) \quad \text{ and } \quad
    \phi_2 =O\Big( \beta(n)^2  \log\left(\tfrac{1}{\tilde s}\right) n^{\gamma}  \Big) \;.
\end{align*}
However, one should calculate the first few terms. We give one more
\begin{align*}
    \psi_3 =O\Big(\beta(n)^{3}\tilde s ^{1-2\gamma}\log\left(\tfrac{1}{\tilde s}\right) n^{1-\gamma}  \Big) \quad \text{ and } \quad
    \phi_3 =O\Big( \beta(n)^{3}\tilde s ^{1-2\gamma}   n^{\gamma}  \Big)  \;.
\end{align*}
One can see by induction that, for all $k\ge1$,
\begin{equation}
\begin{aligned} \label{15}
   \psi_k =&\,O\Big(\beta(n)^{k}\tilde s ^{\lfloor \tfrac{k}{2}\rfloor(1-2\gamma)} \log\left(\tfrac{1}{\tilde s}\right)^{\mathbbm{1}_{\{\text{k odd}\}}} n^{1-\gamma} \Big), \\
    \phi_k =&\,O\Big(\beta(n)^{k}\tilde s ^{( \lceil \tfrac{k}{2}\rceil -1)(1-2\gamma)} \log\left(\tfrac{1}{\tilde s}\right)^{\mathbbm{1}_{\{\text{k even}\}}} n^{\gamma}  \Big) .
\end{aligned}    
\end{equation}
Observe that $\phi_k=o(\phi_{k-1})$ if $k$ is even. 
%Thus the sum over all path-lengths reduces to the sum over the paths of uneven length. {\color{red}Say explicitly what this means.}{\color{blue}this is exactly what happens below} Now b
By (\ref{14}) and Lemma \ref{1.3} we get
\begin{align*}
    \sum_{l=3}^n \mathbb{E}[X_l] &\le \sum_{l=1}^\infty 2\sum_{0<x_{2l+1}\le \tilde s n} \phi_{2l+1} x_{2l+1}^{-\gamma}
    \le c n  \tilde s^{1-\gamma}\beta(n)^3 \tilde s ^{1-2\gamma}\sum_{l=0}^\infty \big( \beta(n)^{2 }\tilde s ^{(1-2\gamma)}\big)^l\,.
\end{align*}
Using that $\tilde s = \beta(n)^r$ we see that the infinite sum above converges if and only if $r<\alpha$, as %should be 
expected. If $\gamma>\tfrac23$ we can choose $r>\tfrac{1}{2\gamma-1}$, such that the term above is~$o(sn)$. For the sake of simplicity let \smash{$r=\tfrac{4}{3}\tfrac{1}{2\gamma-1}$}, then 
\begin{align*}
     \sum_{l=3}^n \mathbb{E}[X_l] &\le  \,C n \beta(n)^{(2\gamma-\tfrac13)\tfrac{1}{2\gamma-1}} = o(sn)\,,
\end{align*}
since $2\gamma-\tfrac13>1$ if $\gamma>\tfrac23$. Now if $\gamma\in(\tfrac12,\tfrac23]$ we can still reduce the $k$-neighbourhood of $\mathcal G^{_{(2)}}_{n,\mu}$ to certain finite $k$. Fix $r=\tfrac{1+\varepsilon}{2\gamma-1}$ for some small $\varepsilon>0$ and $k\ge1$. Then, similar to before, we have 
\begin{align*}
     \sum_{l=2k+1}^n \mathbb{E}[X_l] &\le \sum_{l=k}^\infty 2\sum_{0<x_{2l+1}\le \tilde s n} \phi_{2l+1} x_{2l+1}^{-\gamma}
     \le C n  \beta(n)^{2k+1} \tilde s ^{k(1-2\gamma)+1-\gamma} \,,
\end{align*}
which is $o(sn)$ if %and only if 
%\begin{align*}
    $2k+1+r(1-\gamma)+k(1-2\gamma)r>\tfrac{1}{2\gamma-1}$,
%  \\
%    \iff \quad (1-\varepsilon)k>&\tfrac{(1-\varepsilon)(1-\gamma)}{2\gamma-1}  \iff \quad 
and hence for $k\ge \lfloor\frac{1-\gamma}{2\gamma-1}\rfloor +1$.% 
\smallskip%

 Finally, we look at the vertices with an even distance to $\mathcal G^{(2)}_{n,\mu}$. Recall that the number of vertices at distance two or at distance greater than \smash{$K_\gamma := 2 \lfloor\frac{1-\gamma}{2\gamma-1}\rfloor+2$} is of order $o(sn)$. %Hence we only need to consider distances four to $K_\gamma$. If we c
 Choose \smash{$\tilde s = \mu  \beta(n)^{_{\frac{2}{2\gamma-1}}}$} so that $\lfloor\tilde s n \rfloor= m$. Using 
 %the upper bound 
 (\ref{15}) we get
\begin{align*}
    \mathbb E \sum_{k=2}^\infty \#\big(\mathcal{N}_{2k}(\mathcal G_{n,\mu}^{(2)})\setminus \mathcal{N}_{2k+1}(\mathcal G_{n,\mu}^{(2)})\big)\le   \sum_{l=2}^{K_\gamma} \mathbb{E}[X_{2l}] + o(sn) \le  \sum_{l=2}^{K_\gamma} \sum_{0<x_{2l}\le m} \phi_{2l} x_{2l}^{-\gamma} + o(sn)
    \\ \le  c m^{1-\gamma}\beta(n)^2 \log\left(\tfrac{1}{\beta(n)}\right) n ^\gamma + o(sn)= c n \beta(n)^{\tfrac{2\gamma}{2\gamma-1}}\log\left(\tfrac{1}{\beta(n)}\right)+o(sn)\,,
\end{align*}
which is $o(sn)$. 
The result now follows from Markov's inequality.
\end{proof}

%\noindent One can see easily, that for two vertices $i,j\le sn$ the probability that they are connected by a 1-connector is strictly greater than that of a direct edge. We do not need to prove this but it gives intuition on what we are about to show. That is, 

We next show, in Lemma~\ref{compofm}, that
almost every vertex, which is in the connected component of some $i\in\{1,\ldots,m\}$, is either its direct neighbour or connected to it by a path using one-connectors, called a \emph{one-path}, which we now define.
%almost every vertex $x>sn$, which is in the connected component of some $i\le m$, is either its direct neighbour or the neighbour of some $j\in\{m+1,\dots,sn\}$ which is connected to~$i$ via a path using only 1-connectors. We define such paths precisely, calling them \emph{1-paths}.}

\begin{definition}\label{1path}\text{}\\[-5mm]
\begin{itemize}
\item    
    \emph{Let $i,j\in\{1,\dots,\lfloor sn \rfloor\}$ and $K_\gamma:=\lfloor\tfrac{1-\gamma}{2\gamma-1}\big\rfloor$. Write $j\overset{1\text{-path}}{\longleftrightarrow}i$} for the event, that there exists an $l\in\{1,\dots, K_\gamma\}$ and distinct vertices $y_1,\dots,y_{l-1}\in\{m+1,\dots,\lfloor sn\rfloor\}$ and $x_1,\dots,x_{l}\in\{\lfloor sn\rfloor+1,\dots,n\}$ such that $i \sim x_1$, $j\sim x_{l}$ and $x_{k} \sim y_k \sim x_{k+1}$ for all $k\in\{1,\dots,l-1\}$. 
\item \emph{If  $i\in\{1,\ldots, \lfloor sn\rfloor\}$, $x\in\{\lfloor sn\rfloor+1,\dots,n\}$ write $x\overset{1\text{-path}}{\longleftrightarrow}i$ for the event that there exists  $j\in\{m+1,\dots,\lfloor sn\rfloor\}$ such that $x\sim j$ and also \smash{$j\overset{1\text{-path}}{\longleftrightarrow}i$.}}
%    {\color{red}Also wenn $x>sn$ haben 1-Pfade ungerade Länge und wenn $x\le sn$ haben sie gerade Länge.}
\end{itemize}
\end{definition}

\noindent Recall that the number of vertices at distances greater than $2K_\gamma+1$ to $\mathcal G^{_{(2)}}_{n,\mu}$ is of a smaller order than $sn$ motivating the restriction $l\leq K_\gamma$ in the definition.
\pagebreak[3]
%{\color{blue}Ich habe die Definition nun wieder nicht asymetrisch gemacht. Ich würde vorschlagen wir schreiben besser sowas wie: "We will use the $\overset{1\text{-path}}{\longleftrightarrow}$ only when considering connections into the set $\{1,\dots,m\}$. Since by Lemma \ref{pathlenght} the number of vertices at distances greater than $2K_\gamma+1$ to $\mathcal G^{_{(2)}}_n$ is of a smaller order than $sn$.}

\begin{lemma}\label{compofm}
Define 
$$\mathcal A_{\text{\emph{no 1-path}}}:=\Bigg\{x\in\{m+1,\dots,n\}\,:\,{ x\leftrightarrow\{1,\dots,m\}, x\not\sim\{1,\dots,m\}\atop \text{ \emph{and} }x\overset{1\text{\emph{-path}}}{\centernot\longleftrightarrow}\{1,\dots,m\}}\Bigg\}\,.$$
Then 
    \begin{align*}
        \frac{1}{n\beta(n)^{\alpha/2}}\#\mathcal A_{\text{\emph{no 1-path}}}
        \longrightarrow \,
        0\qquad \text{ in probability.}
    \end{align*}
\end{lemma}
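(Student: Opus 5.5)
We plan to prove the lemma by the same path counting scheme as in Lemma~\ref{pathlength}, now applied to the geometry of shortest paths instead of only their length. Every $x\in\mathcal A_{\text{no 1-path}}$ is joined to $\{1,\dots,m\}$ by a shortest path $x=x_0x_1\cdots x_l$ with $x_l\le m$ and $x_0,\dots,x_{l-1}>m$. Here $l\ge2$, because $x\not\sim\{1,\dots,m\}$.

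By Lemma~\ref{pathlength}, up to a set of size $o(n\beta(n)^{\alpha/2})$ we may assume two things. First, $l$ is odd with $3\le l\le 2K_\gamma+1$. Second, $x$ lies outside $\{m+1,\dots,\lfloor sn\rfloor\}$, since that set has $sn-m$ vertices; we discard it separately, or we only count such $x$ with $x>sn$. Note that a path which alternates between $\{\lfloor sn\rfloor+1,\dots,n\}$ (at even positions) and $\{m+1,\dots,\lfloor sn\rfloor\}$ (at odd positions), ending with a step from a vertex $>sn$ into $\{1,\dots,m\}$, is exactly a one-path. Hence if $x\notin$ the exceptional sets and has no one-path, its shortest path must violate this alternation somewhere. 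That is, at least one of the following holds:
\begin{enumerate}
\item[(a)] two consecutive vertices $x_k,x_{k+1}$ both exceed $\lfloor sn\rfloor$;
\item[(b)] two consecutive vertices $x_k,x_{k+1}$ both lie in $\{m+1,\dots,\lfloor sn\rfloor\}$;
\item[(c)] the final step into $\{1,\dots,m\}$ comes from a vertex $x_{l-1}\le sn$.
\end{enumerate}

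For each of these finitely many patterns (the length is at most $2K_\gamma+1$) we bound the expected number of such paths. We run the recursion \eqref{recurmu} of Lemma~\ref{1.3}, but restrict each summation variable to the prescribed range, $(\lfloor sn\rfloor,n]$ or $(m,\lfloor sn\rfloor]$. Lemma~\ref{1.3} (with $s_1,s_2\in\{s,\,m/n\}$) then propagates bounds of the form $\psi_k x^{\gamma-1}\mathbbm 1_{\{x>s n\}}+\phi_k x^{-\gamma}$.

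The mechanism is as follows. Compared with the alternating pattern, the alternating steps old--young--old each produce a factor $\beta(n)^2\tilde s^{\,1-2\gamma}\approx \beta(n)^2 (m/n)^{1-2\gamma}$, which is of order one since $m/n\asymp\beta(n)^\alpha$. A bad step instead produces a strictly smaller factor:
\begin{itemize}
\item Two young vertices adjacent gives a factor $\beta(n)\log(1/s)$ rather than $\beta(n)^2 s^{1-2\gamma}$-type gains.
\item Two middle vertices adjacent gives $\beta(n)\log(sn/m)\to 0$.
\item A middle vertex connecting into the core gives a factor of order $\beta(n)\,m^{1-\gamma}(sn)^{\gamma-1}$-type, compared with the one-connector contribution.
\end{itemize}
Each of these factors is a negative power of $\beta(n)^{-1}$ up to logarithms. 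Summing $\phi_l x_l^{-\gamma}$ over $x_l\le m$, as at the end of the proof of Lemma~\ref{pathlength}, should give $o(n\beta(n)^{\alpha/2})$ for every bad pattern. Markov's inequality then finishes the proof.

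The main obstacle is bookkeeping the recursion \eqref{psiphi} with mixed ranges: we must verify carefully that one bad step really costs a power $\beta(n)^{\delta}$ for some $\delta>0$. Near $\gamma\downarrow\frac12$ the exponents are delicate. If necessary we will first try to enlarge the gap between $m$ and $sn$ (using $m\ll sn$) to absorb the logarithmic factors.
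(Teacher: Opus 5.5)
Your route is essentially the paper's. You reduce to odd lengths $3\le l\le 2K_\gamma+1$ via Lemma~\ref{pathlength}, classify the shortest paths that break the one-path alternation, and use the recursion \eqref{recurmu} with Lemma~\ref{1.3} to show each breach costs a vanishing factor. The paper gets $c'\beta(n)\log(1/s)$ by working with triples $x_k,x_{k+1},x_{k+2}$ (the counts $X^{\vee}_{l,k},X^{\wedge}_{l,k},X^{\downarrow}_{l,k},X^{\uparrow}_{l,k}$) instead of your pairs.

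\textbf{The gap: the middle-range starting vertices.} You cannot discard $x\in\{m+1,\dots,\lfloor sn\rfloor\}$ by counting. Since $m\ll sn$, this set has $\sim sn=n\beta(n)^{\alpha/2}$ elements, exactly the normalisation. No earlier lemma controls the middle vertices at odd distance $\ge 3$ from $\{1,\dots,m\}$; Lemma~\ref{pathlength} only removes even and long distances.

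After your discard, case (c) is in fact empty. If $x_0>sn$ and neither (a) nor (b) occurs, the path alternates with young vertices at even positions. Since $l-1$ is even, $x_{l-1}>sn$. Conversely, (c) is exactly what happens for a middle starting point $x_0\in\{m+1,\dots,\lfloor sn\rfloor\}$ when (a) and (b) fail.

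So keep these $x$ and estimate the paths middle--young--middle--$\cdots$--middle--core directly; this is the paper's $Y_l$. Chaining the one-connector bound \eqref{simpleform} gives $\mathbb E[Y_l]\le c_1c_2^{\,l}\,\beta(n)^{3}n^{2\gamma-1}(sn)^{1-\gamma}m^{1-\gamma}\log(1/s)$. This is of order $\beta(n)^{3\gamma/(2\gamma-1)}n\log(1/s)=o(sn)$. It is a direct computation, not the per-step comparison in your third bullet.

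\textbf{A smaller point on your mechanism.} Lemma~\ref{1.3} has only lower cut-offs, so the loss from a bad pair often appears only at the following step.

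\begin{itemize}
\item Middle--middle with $x_{k+1}>x_k$: this is the ordinary $\phi\to\psi$ transition with factor $\beta(n)m^{1-2\gamma}$, not $\beta(n)\log(sn/m)$. The gain $s^{2\gamma-1}=\beta(n)$ comes at the next step, because $x_{k+1}\le sn$ caps $\sum x_{k+1}^{2\gamma-2}$ at $(sn)^{2\gamma-1}$ instead of $n^{2\gamma-1}$. Lemma~\ref{1.3} has no such upper cut-off. Your bound form $\psi_kx^{\gamma-1}\mathbbm 1_{\{x>sn\}}+\phi_kx^{-\gamma}$ can only absorb it via $x^{\gamma-1}\mathbbm 1_{\{x\le sn\}}\le (sn)^{2\gamma-1}x^{-\gamma}$.
\item Young--young with $x_{k+1}<x_k$: this is an ordinary $\psi\to\phi$ step. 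It pays $(sn)^{1-2\gamma}$ in place of $m^{1-2\gamma}$ only at the next step.
\end{itemize}

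This is why the paper looks at triples. With either the triple decomposition or the conversion above, your bookkeeping goes through.
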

\begin{proof}
    By Lemma \ref{pathlength} we can neglect vertices at even distance to $\{1,\ldots,m\}$ and at  distances $2\big\lfloor\tfrac{1-\gamma}{2\gamma-1}\big\rfloor+3$
    or greater. 
    For $l\ge 3 $ and $k\leq l-2$ let $X_{l,k}^{\vee},X_{l,k}^{\wedge},X_{l,k}^{\downarrow},X_{l,k}^{\uparrow}$ be the number of paths $x_0x_1\cdots x_l$ such that $x_i>m$ for all $0\leq i<l$, 
    $x_l\le m $ and 
    \begin{itemize}
        \item %$x_k>x_{k+1}$, $x_{k+2}>x_{k+1}$ 
        $x_k, x_{k+2}>x_{k+1}$ 
        and $\big(x_{k+1}>sn$ or $\min\{x_k,x_{k+2}\}<sn\big)$ for $X_{l,k}^{\vee}$,
        \item %$x_k<x_{k+1}$, $x_{k+2}<x_{k+1}$ 
        $x_k, x_{k+2}<x_{k+1}$ 
        and $\big(x_{k+1}\le sn$ or $\max\{x_k,x_{k+2}\}>sn\big)$ for $X_{l,k}^{\wedge}$,
        \item $x_k>x_{k+1}>x_{k+2}$ for $X_{l,k}^{\downarrow}$,
        \item $x_k<x_{k+1}<x_{k+2}$ for $X_{l,k}^{\uparrow}$.
    \end{itemize}
    Also let $Y_l$ be the number of paths $x_0x_1\cdots x_l$ such that $x_0,x_k\in\{m+1,\dots,sn\}$ for all even $k$, $x_k>sn$ for all odd $k\neq l$ and $x_l\le m$ if $l$ is odd and $Y_l=0$ else. That is, the $Y$-paths follow the one-path pattern only up to the last step.\medskip 
    
   We now argue that any path which does not follow the one-path pattern fits at least one of the paths described above. First note that any path ending in $\{1,\ldots,m\}$, which is neither a one-path nor a $Y$-path, has a segment $x_k\sim x_{k+1}$ with either  $x_k,x_{k+1}>sn$ or $x_k,x_{k+1}\in\{m+1,\dots,sn\}$ for some $k\le l-2$. Second, if $x_k>x_{k+1}>sn$ then the next step is either $x_{k+2}<x_{k+1}$ and thus of $X^{\downarrow}$-type or $x_{k+2}>x_{k+1}$ and thus of $X^{\vee}$-type. If however $x_{k+1}>x_k>sn$ then the next step is either $x_{k+2}<x_{k+1}$ and thus of $X^{\wedge}$-type or $x_{k+2}>x_{k+1}$ and thus of $X^{\uparrow}$-type. 
    \smallskip 
    
    A similar argument for the case that $x_k,x_{k+1}\in\{m+1,\dots,sn\}$ shows, that any non one-path fits at least one of the patterns described above. Thus we have that 
    \begin{align}\label{anop}
       \mathbb E\big[ \#\mathcal A_{\text{no 1-path}}\big] \le \sum_{l=3}^{2K_\gamma+1}\sum_{k=1}^{l-2}\mathbb E\big[X_{l,k}^{\vee}+X_{l,k}^{\wedge}+X_{l,k}^{\downarrow}+X_{l,k}^{\uparrow}\big]+\mathbb E\big[Y_l\big]+o(sn)\,.
    \end{align}
    Note that $X_{l,k}^{\vee},X_{l,k}^{\wedge},X_{l,k}^{\downarrow}$ and $X_{l,k}^{\uparrow}$ coincide with $X_l$ from the proof of Lemma \ref{pathlength}, for $\tilde sn=m$, up until step $k$.  
    Thus we can use our recursion (\ref{recurmu}) up to step $k$. We start with $X_{l,k}^{\vee}$, for which we may assume that $x_{k+1}>sn$ and $x_k,x_{k+2}$ arbitrary. Then
    \begin{align*}
       \mathbb E\big[X_{l,k}^{\vee}\big]\le &\sum_{0<x_l\le m} \cdots \sum_{x_k,x_{k+1},x_{k+2}>sn} \mu_k(x_k)\beta(n)^2x_k^{\gamma-1}x_{k+1}^{-2\gamma}x_{k+2}^{\gamma-1}\prod_{i=k+2}^{l-1} \mathbb P (x_i\sim x_{i+1})
    \end{align*}
    Using Lemma \ref{1.3} and estimating the sum by an integral we get for the middle term
    \begin{align*}
        \sum_{x_k,x_{k+1}>sn} & \mu_k(x_k)\beta(n)^2x_k^{\gamma-1}x_{k+1}^{-2\gamma}x_{k+2}^{\gamma-1}
        \\ & \le  \sum_{x_k,x_{k+1}>sn} \big(\mathbbm{1}_{\{x_k>sn\}}\psi_kx_k^{2(\gamma-1)}+\phi_kx_k^{-1}\big)\beta(n)^2x_{k+1}^{-2\gamma}x_{k+2}^{\gamma-1}
        \\ & \le c\mathbbm{1}_{\{x_{k+2}>sn\}}\big( \psi_k \beta(n)+\phi_k \beta(n)^2\log(\tfrac1s)(sn)^{1-2\gamma}\big)x_{k+2}^{\gamma-1}
         \\ & \le    c'\mathbbm{1}_{\{x_{k+2}>sn\}} \psi_{k} \beta(n) x_{k+2}^{\gamma-1},
    \end{align*}
    where we used (\ref{psiphi}) and (\ref{15}) in the last step with $\tilde s=\beta(n)^\alpha$. By the same set of equations one can also see that $\beta(n)\psi_k=o(\psi_{k+1})$.
    For $X_{l,k}^{\wedge}$ we may assume without loss of generality that $x_k>sn$, then
    \begin{align*}
        \mathbb E\big[X_{l,k}^{\wedge}\big]\le &\sum_{0<x_l\le m} \cdots \sum_{x_k>sn\atop x_{k+1},x_{k+2}>m} \mu_k(x_k)\beta(n)^2x_k^{-\gamma}x_{k+1}^{2(\gamma-1)}x_{k+2}^{-\gamma}\prod_{i=k+2}^{l-1} \mathbb P (x_i\sim x_{i+1})
    \end{align*}
    and for the middle term
    \begin{align*}
        \sum_{x_k>sn\atop x_{k+1}>m} \mu_k(x_k)\beta(n)^2x_k^{-\gamma}x_{k+1}^{2(\gamma-1)}x_{k+2}^{-\gamma}
    \le &\,c\big(\psi_k \log(\tfrac1s)+\phi_k(sn)^{1-2\gamma}\big)\beta(n)^2n^{2\gamma-1}x_{k+2}^{-\gamma}
         \\ \le &\,c'\phi_{k+1}\beta(n)\log(\tfrac1s)x_{k+2}^{-\gamma}\,.
    \end{align*}
    For the other paths we have
    \begin{align*}
         \mathbb E\big[X_{l,k}^{\downarrow}\big]\le &\sum_{0<x_l\le m} \cdots \sum_{x_k,x_{k+1},x_{k+2}>m} \mu_k(x_k)\beta(n)^2x_k^{\gamma-1}x_{k+1}^{-1}x_{k+2}^{-\gamma}\prod_{i=k+2}^{l-1} \mathbb P (x_i\sim x_{i+1})\,.
    \end{align*}
    Again, by our estimates (\ref{15}) we have for the middle term 
    \begin{align*}
         \sum_{x_k,x_{k+1}>m} & \mu_k(x_k)\beta(n)^2x_k^{\gamma-1}x_{k+1}^{-1}x_{k+2}^{-\gamma}
        \\\le & \sum_{x_k,x_{k+1}>m} \big(\mathbbm{1}_{\{x_k>m\}}\psi_kx_k^{2(\gamma-1)}+\phi_kx_k^{-1}\big)\beta(n)^2x_{k+1}^{-1}x_{k+2}^{-\gamma}
        \\ \le & c\big( \psi_k n^{2\gamma-1}\log(\tfrac1s)\beta(n)^2+\phi_k \beta(n)^2\log(\tfrac1s)^2\big)x_{k+2}^{-\gamma}
         \\ \le & c'\beta(n)\log(\tfrac1s) \phi_{k+1}  x_{k+2}^{-\gamma} \;.
    \end{align*}
    Similarly one can see, that 
     \begin{align*}
         \mathbb E\big[X_{l,k}^{\uparrow}\big]\le &\sum_{0<x_l\le m} \cdots \sum_{x_{k+2}>m} c'\mathbbm{1}_{\{x_{k+2}>m\}}\beta(n)\log(\tfrac1s)\psi_{k+1}x_{k+2}^{\gamma-1}\prod_{i=k+2}^{l-1} \mathbb P (x_i\sim x_{i+1})\,.
    \end{align*}
    Thus in any of the four paths we can factor out the $c'\beta(n)\log(\tfrac1s)$ term and continue the recursion. Hence
    \begin{align*}
         \mathbb E\big[X_{l,k}^{\vee}+X_{l,k}^{\wedge}+X_{l,k}^{\uparrow}+X_{l,k}^{\downarrow}\big]\le c'\beta(n)\log(\tfrac1s) \mathbb E\big[X_{l}\big]\,,
    \end{align*}
    for any $k$. Moreover for the paths of uneven length, we get by choosing $\tilde s n=m=\beta(n)^{2/(2\gamma-1)}n$ in (\ref{15}),   that
    $$\phi_{2l+1}\le c \beta(n)^{2l+1}s^{l(2\gamma-1)}n^\gamma = c \beta(n) n^\gamma \,.$$
    Thus by (\ref{14}) and the above we get
    \begin{align*}
           \sum_{l=1}^{K_\gamma} \beta(n)\log(\tfrac1s) \mathbb E\big[X_{2l+1}\big] + o(sn)
          \le &  \sum_{l=1}^{K_\gamma} C\beta(n)\log(\tfrac1s)\phi_{2l+1}m^{1-\gamma} + o(sn) \\
          \le & C\beta(n)^{2+(1-\gamma)\alpha}\log(\tfrac1s) n +o(sn)=o(sn)\,.
    \end{align*}
    It remains to estimate the sum over the $Y$-paths in (\ref{anop}). By the definition of $Y_l$ we have, for $l\ge 3$ odd, that 
    \begin{align*}
          \mathbb E\big[Y_l\big]\le &\sum_{x_l\le m}\sum_{m<x_0,x_2,..,x_{l-1}\le sn }\prod_{i=0}^{l-3}\mathbb P(x_i\overset{1}{\longleftrightarrow}x_{i+2})\beta(n)x_{l-1}^{\gamma-1}x_l^{-\gamma}
          \\\le &\sum_{x_l\le m}\sum_{m<x_0,x_{l-1}\le sn}\frac{\beta(n)^3n^{2\gamma-1}}{2\gamma-1}x_0^{-\gamma}x_{l-1}^{-1}x_l^{-\gamma}\Big(\sum_{m<x\le sn}\frac{\beta(n)^2n^{2\gamma-1}}{2\gamma-1}x^{-2\gamma}\Big)^{\tfrac{l-3}{2}}
           \\\le &\,c_1\beta(n)^3n^{2\gamma-1}(sn)^{1-\gamma}\log(\tfrac1s)m^{1-\gamma} c^l_2 = \,c_1 c_2^l \beta(n)^{\tfrac{3\gamma}{2\gamma-1}}n =o(sn)\,.
    \end{align*}
    Since we only sum over finitely many $l$ we are done.
\end{proof}

At this point we know that the size of the component of $C_1(\mathscr G_{n,\mu}^{_{(2)}})$ is asymptotically the same as the number of vertices connected to it either by an edge or by a one-path. We continue by systematically ruling out further sets of vertices, first those
with index smaller than $sn$. %\beta(n)^{\alpha/2}$.}

\begin{lemma} \label{3.12}
We have
    \begin{align*}
        \frac{1}{n\beta(n)^{\alpha/2}}\#
        \big\{ j \in \{1,\ldots, \lfloor sn
        %n\beta(n)^{\alpha/2} 
        \rfloor \}  \colon j \sim \mathcal G^{(2)}_{n,\mu} \text{ or }
        j \overset{1\text{\emph{-path}}}{\longleftrightarrow} \mathcal G^{(2)}_{n,\mu} \big\}
        {\longrightarrow} \, 0\quad \text{ in probability.}
    \end{align*}
\end{lemma}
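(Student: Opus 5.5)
We bound the expected number of such vertices $j\le sn$ and then apply Markov's inequality. The vertices $j\le m$ number $m=O(n\beta(n)^\alpha)=o(sn)$, so we only consider $j\in\{m+1,\dots,\lfloor sn\rfloor\}$.

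\emph{Direct neighbours.} The expected number of $j\in\{m+1,\dots,sn\}$ with $j\sim\{1,\dots,m\}$ is at most
\[
\sum_{i\le m}\sum_{m<j\le sn}\beta(n)i^{-\gamma}j^{\gamma-1}\le C\beta(n)m^{1-\gamma}(sn)^{\gamma}.
\]
Since $m^{1-\gamma}(sn)^\gamma=n\beta(n)^{\alpha(1-\gamma)+\alpha\gamma/2}$, this equals $C n\beta(n)^{1+\alpha(1-\gamma/2)}$. Dividing by $sn=n\beta(n)^{\alpha/2}$ leaves $\beta(n)^{1+\alpha(1-\gamma)/2}\to0$.

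\emph{One-path connections.} Such a connection is a path $i\sim x_1\sim y_1\sim x_2\sim\cdots\sim x_l\sim j$ with $i\le m$, $y_k,j\in\{m+1,\dots,sn\}$, $x_k>sn$ and $l\le K_\gamma$. It is a concatenation of $l$ one-connections in the sense of Definition~\ref{def1}. We bound the expectation by a union bound over the intermediate vertices, using the rank-one estimate from the proof of Lemma~\ref{lem1}:
\[
\mathbb P(u\overset{1}{\longleftrightarrow}v)\le \tfrac{1}{2\gamma-1}\beta(n)^2n^{2\gamma-1}u^{-\gamma}v^{-\gamma}(1+o(1)).
\]
One subtlety is that the path events are not independent. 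Since the path uses distinct vertices, the edge sets are disjoint, so the probability of the whole path is at most the product of the edge probabilities. Summing over the $x_k>sn$ reproduces the above factor for each link. Writing $a:=\beta(n)^2n^{2\gamma-1}/(2\gamma-1)$, the expected count for fixed $l$ is bounded by
\[
a^l\Big(\sum_{i\le m}i^{-\gamma}\Big)\Big(\sum_{m<y\le sn}y^{-2\gamma}\Big)^{l-1}\sum_{m<j\le sn}j^{-\gamma}\le C a^l m^{1-\gamma}\big(m^{1-2\gamma}\big)^{l-1}(sn)^{1-\gamma}.
\]
By the choice \eqref{mmudef}, $a\,m^{1-2\gamma}=\beta(n)^2n^{2\gamma-1}(\mu n\beta(n)^\alpha)^{1-2\gamma}/(2\gamma-1)=\mu^{1-2\gamma}/(2\gamma-1)$ is a constant, since $\alpha(2\gamma-1)=2$. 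Hence for each $l\le K_\gamma$ the bound is $C_\mu\, a\, m^{1-\gamma}(sn)^{1-\gamma}$, a constant times the $l=1$ term. This term equals $C\beta(n)^{2+\alpha(1-\gamma)+\alpha(1-\gamma)/2}\,n$, using $n^{2\gamma-1}n^{2(1-\gamma)}=n$. Divided by $sn$, it leaves $\beta(n)^{2+\alpha(1-\gamma)\cdot\frac32-\frac\alpha2}$. The exponent equals $(2\gamma-1)^{-1}\bigl(2(2\gamma-1)+3(1-\gamma)-1\bigr)=\gamma/(2\gamma-1)>0$, so the term tends to $0$. Since $l$ ranges over finitely many values, the total is $o(sn)$.

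\emph{Main obstacle.} The delicate part is the second step: checking that the constant-order geometric factor $a\,m^{1-2\gamma}$ does not accumulate, which is guaranteed because $l\le K_\gamma$ is bounded. We also have to ensure that the union bound over non-distinct or overlapping intermediate vertices is harmless, which is handled by the restriction to distinct vertices in Definition~\ref{1path}. Markov's inequality then concludes the proof.
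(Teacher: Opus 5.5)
Your proof is correct. The direct-neighbour estimate is the same as the paper's, since $C\beta(n)m^{1-\gamma}(sn)^{\gamma}=C\beta(n)s^{2-\gamma}n$.

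For the one-path part you take a different route. The paper does not count one-paths. It argues that a one-path-connected $j$ lies at even distance from the core and then invokes Lemma~\ref{pathlength}. You instead do a self-contained first-moment count of the alternating paths. You sum out the connectors $x_k>sn$ link by link to get a factor $a$ per link, and you observe that $a\,m^{1-2\gamma}$ is a constant, so the bounded range $l\le K_\gamma$ costs only a constant.

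The paper's route is shorter because it reuses an estimate already proved. Yours avoids a looseness in that reduction. A one-path has even length, but the graph distance from $j$ to $\{1,\dots,m\}$ need not be even; it could be $3$, for instance. When $K_\gamma\ge1$, odd distances $3,\dots,2K_\gamma+1$ are not covered by the statement of Lemma~\ref{pathlength}. They are covered only implicitly, by the even-length path counts $\mathbb E[X_2]$ and $\mathbb E[X_{2l}]$ (with $\tilde s n=m$) inside its proof.

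Your count also uses the restriction $j\le sn$ on the endpoint. Relative to $sn$ it gives the sharper rate $\beta(n)^{\gamma/(2\gamma-1)}$, whereas $\mathbb E[X_2]$ gives only $\beta(n)\log(1/\beta(n))$, because $X_2$ also counts endpoints beyond $sn$.

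Your remark on distinctness is adequate. Loop-erasing an alternating walk preserves the alternation between vertices above and below $sn$, so a minimal one-path is self-avoiding and the product of edge probabilities is exact.
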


\begin{proof} Note first that we can restrict attention to
    vertices $j\in \{m+1,\dots, \lfloor sn \rfloor \}$. If $j$ is  connected by a one-path to some $i \in \mathcal G^{_{(2)}}_{n,\mu}$ then it is by definition at an even distance to $i$ and by Lemma~\ref{pathlength} this set is negligible. Moreover, we have
    \begin{align*}
   \mathbb E \big[\#\{j\in\{m+1,\dots,\lfloor sn \rfloor\}\,:\,j\sim \mathcal G^{(2)}_{n, \mu} \big]\le &\sum_{i\le m \atop j\in\{m+1,\dots,\lfloor sn \rfloor\}} \beta(n) i^{-\gamma}j^{\gamma-1}   \\ \le&\, C\beta(n)s^{2-\gamma}n =o(sn),
   \end{align*}
   showing that nearest neighbours in this set are also negligible.
\end{proof}

Our aim is now to show that, for every $\varepsilon, \delta>0$, an $M$ can be chosen so large that with probability exceeding $1-\delta$ all but $\varepsilon n\beta(n)^{\alpha/2}$ vertices, which are connected by a path to the giant in $\mathscr G_{n,\mu}^{_{(2)}}$, are connected by an edge to a unique vertex in $\mathscr G_{n,M}^{_{(2)}}$, see Lemma~\ref{uppersuper}. We prepare this step with a further lemma, comparing connections by one-paths and by one-connectors.

\begin{lemma}\label{1con}
    There exist $c, c'>1$, depending on $\mu$, such that, for any $i,j\in\{1,\dots,\lfloor sn\rfloor\}$ and $x\in\{\lfloor sn\rfloor+1,\dots,n\}$, we have
    \begin{align*}
        \mathbb P(j\overset{1\text{\emph{-path}}}{\longleftrightarrow}i) \le c \,\mathbb P(j\overset{1}{\longleftrightarrow}i)\quad\text{ and }\quad\mathbb P(x\overset{1\text{\emph{-path}}}{\longleftrightarrow}i) \le c' \,\mathbb P(x\sim i)\,.
    \end{align*}
    Here %, by slight abuse of notation, 
    $j\overset{1}{\longleftrightarrow}i$ stands for the event that $i$ and $j$ are connected via a single middle vertex in $\{\lfloor sn\rfloor+1,\dots,n\}$, in accordance with Definition~\ref{1path}. 
\end{lemma}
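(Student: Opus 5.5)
We bound the probability of a one-path by a union bound over its possible shapes and compare the resulting sums with the explicit one-connector probability from \eqref{rankone}. The starting point is the estimate
$$\mathbb P\big(i\overset{1}{\longleftrightarrow}j\big)\ge c_0\,\beta(n)^2n^{2\gamma-1}i^{-\gamma}j^{-\gamma}\wedge c_0 .$$
It follows from the computation in the proof of Lemma~\ref{lem1}, using $1-e^{-t}\ge (1-e^{-1})(t\wedge 1)$. The sum over middle vertices is now over $\{\lfloor sn\rfloor+1,\dots,n\}$ with $s\to0$, but the integral still gives order $n^{2\gamma-1}$.

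\textbf{First inequality.} By definition a one-path of length parameter $l\le K_\gamma$ consists of one-connections $i\overset{1}{\longleftrightarrow}y_1\overset{1}{\longleftrightarrow}\cdots\overset{1}{\longleftrightarrow}y_{l-1}\overset{1}{\longleftrightarrow}j$ with distinct intermediate $y_k\in\{m+1,\dots,\lfloor sn\rfloor\}$. These use disjoint edge sets, since the middle vertices are distinct and lie above $sn$. Summing the probability of each concrete path over all vertices, as in Lemma~\ref{onecon}, gives
$$\mathbb P\big(j\overset{1\text{-path}}{\longleftrightarrow}i\big)\le \sum_{l=1}^{K_\gamma}\sum_{y_1,\dots,y_{l-1}}\ \prod_{k=0}^{l-1} C\beta(n)^2n^{2\gamma-1}y_k^{-\gamma}y_{k+1}^{-\gamma},$$
where $y_0=i$ and $y_l=j$. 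Each $y_k$ with $1\le k\le l-1$ appears twice, contributing $\sum_{y>m}C\beta(n)^2n^{2\gamma-1}y^{-2\gamma}\le C'\beta(n)^2n^{2\gamma-1}m^{1-2\gamma}$. By \eqref{mmudef} this equals $C'\mu^{1-2\gamma}$, a constant depending only on $\mu$. Hence the $l$-th term is at most $(C'\mu^{1-2\gamma})^{l-1}C\beta(n)^2n^{2\gamma-1}i^{-\gamma}j^{-\gamma}$, and summing over the finitely many $l\le K_\gamma$ gives $C''\beta(n)^2n^{2\gamma-1}i^{-\gamma}j^{-\gamma}$.

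If this quantity is at most $1$, it is comparable to $\mathbb P(i\overset{1}{\longleftrightarrow}j)$ by the lower bound above. If it exceeds $1$, then $\mathbb P(i\overset{1}{\longleftrightarrow}j)\ge c_0$, and the trivial bound $\mathbb P\le 1\le c_0^{-1}\mathbb P(i\overset{1}{\longleftrightarrow}j)$ suffices. This yields the constant $c$.

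\textbf{Second inequality.} Write $x\overset{1\text{-path}}{\longleftrightarrow}i$ as the union over $j\in\{m+1,\dots,\lfloor sn\rfloor\}$ of $\{x\sim j\}\cap\{j\overset{1\text{-path}}{\longleftrightarrow}i\}$. We want to use a BK/independence-type bound: the edge $x\sim j$ is either disjoint from the edges of the one-path, or the union bound over concrete paths handles it directly, with $x$ distinct from the $x_k$ up to harmless extra terms. Then the first part gives
$$\mathbb P\big(x\overset{1\text{-path}}{\longleftrightarrow}i\big)\le \sum_{m<j\le sn}\beta(n)x^{\gamma-1}j^{-\gamma}\cdot C''\beta(n)^2n^{2\gamma-1}i^{-\gamma}j^{-\gamma}.$$
Summing over $j>m$ again produces the factor $\beta(n)^2n^{2\gamma-1}m^{1-2\gamma}=O(\mu^{1-2\gamma})$. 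The result is $O(1)\,\beta(n)i^{-\gamma}x^{\gamma-1}=O(1)\,\mathbb P(x\sim i)$, since $p_{ix}=\beta(n)i^{-\gamma}x^{\gamma-1}$ for $\beta(n)<1$.

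\textbf{Main obstacle.} The delicate step is making the union bound rigorous. Concrete paths are not independent events across different choices of the $x_k$, so we cannot simply multiply one-connector probabilities. Instead, we bound the probability of a fixed vertex sequence by the product of the edge probabilities, which is valid because all vertices are distinct. We then sum over the connectors $x_k>sn$ in closed form, which reproduces exactly the factor $\beta(n)^2n^{2\gamma-1}y_k^{-\gamma}y_{k+1}^{-\gamma}/(2\gamma-1)$.

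The remaining care is the saturation case, where $i$ and $j$ are so small that $\beta(n)^2n^{2\gamma-1}i^{-\gamma}j^{-\gamma}$ is large. As explained above, the capped lower bound handles this case and absorbs everything into a constant depending only on $\mu$ and $\gamma$.
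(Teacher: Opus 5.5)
Your argument is correct and follows the paper's proof. It uses a union bound over one-path shapes in which each intermediate $y_k\in\{m+1,\dots,\lfloor sn\rfloor\}$ contributes $\beta(n)^2n^{2\gamma-1}\sum_{y>m}y^{-2\gamma}=O(\mu^{1-2\gamma})$, then compares with the linearised one-connector probability, and finishes with a sum over $j$ against $\mathbb P(x\sim j)$ for the second inequality. The shared-edge case $x=x_l$, which the paper passes over, indeed costs only an extra factor $\sum_{m<j\le sn}p_{xj}=O(\beta(n))$.

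Your capped lower bound $\mathbb P(i\overset{1}{\longleftrightarrow}j)\ge c_0(t\wedge 1)$ is a genuine small refinement. The paper uses $\mathbb P(j\overset{1}{\longleftrightarrow}i)\sim\frac{\beta(n)^2n^{2\gamma-1}}{2\gamma-1}i^{-\gamma}j^{-\gamma}$, which needs $i\vee j\ge m$ (true in every application). Your version also covers $i,j\le m$, where this probability saturates.
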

\begin{proof}
From (\ref{rankone}) we get
\begin{equation}\label{simpleform}
\begin{aligned}
    \mathbb P(j\overset{1}{\longleftrightarrow}i) & = 1- \exp\Big(- \frac{1}{m}\, \kappa_\mu\Big( \frac{i}{m}, \frac{j}{m}\Big)\big(1+o(1)\big)\Big) \\ &  = \tfrac{\beta(n)^2n^{2\gamma-1}}{2\gamma-1} i^{-\gamma} j^{-\gamma} (1+o(1)),
    \end{aligned}
    \end{equation}
     using that $i \vee j \ge m$ and hence
    $\beta(n)^2n^{2\gamma-1}(i \vee j)^{-\gamma} \le \beta(n)^{2-\gamma\alpha}n^{\gamma-1} 
    \to 0\,.$
By the definition of a one-path the maximum path length is bounded by some~$C=2K_\gamma+1$. Therefore, by Markov’s inequality,
\begin{align*}
     \mathbb P(j\overset{1\text{-path}}{\longleftrightarrow}i) & \le  \sum_{k=0}^C \sum_{m<y_1,..,y_{k}\le sn}\mathbb P(i\overset{1}{\longleftrightarrow}y_1)\mathbb P(y_{k}\overset{1}{\longleftrightarrow}j)\prod_{i=1}^{k-1}\mathbb P(y_i\overset{1}{\longleftrightarrow}y_{i+1})
     \\ & \le   i^{-\gamma} j^{-\gamma}\sum_{k=0}^C \sum_{m<y_1,..,y_{k}\le sn}\Big( \frac{\beta(n)^2n^{2\gamma-1}}{2\gamma-1}\Big)^{k+1} y_1^{-2\gamma}\cdots y_k^{-2\gamma}
     \\ 
     & \le   i^{-\gamma} j^{-\gamma} \frac{\beta(n)^2n^{2\gamma-1}}{2\gamma-1} \sum_{k=0}^C
     \Big( \frac{\beta(n)^2n^{2\gamma-1}}{2\gamma-1} \sum_{y>m} y^{-2\gamma} \Big)^{k}   \\
     & \le    c \, i^{-\gamma} j^{-\gamma} \frac{\beta(n)^2n^{2\gamma-1}}{2\gamma-1} \,,
     %\sum_{k=0}^C \Big( \frac{1}{2\gamma-1}\Big)^{2k}
\end{align*}
for some $c>1$ depending on $\mu$. 
\pagebreak[3]\smallskip

Now let $x>sn$, then by Definition \ref{1path} and the above we have
\begin{align*}
     \mathbb P(x\overset{1\text{-path}}{\longleftrightarrow}i)= &\,\mathbb P(\exists j\in\{m+1,\dots,sn\}\,:\,x\sim j \text{ and }j\overset{1\text{-path}}{\longleftrightarrow}i)
     \\ \le &\, \sum_{m< j \le sn}\beta(n)x^{\gamma-1}j^{-\gamma} c \mathbb P(j\overset{1}{\longleftrightarrow}i)
     \\ \le &\, c' \beta(n)^3 m^{1-2\gamma}n^{2\gamma-1}x^{\gamma-1}i^{-\gamma}=c' \beta(n) x^{\gamma-1}i^{-\gamma}\,,
\end{align*}
for a suitable $c'>1$  depending on $\mu$, which completes the proof.
\end{proof}

\pagebreak[3]
%\noindent Next we show that the vertices in $\{sn+1,\dots,n\}$, with the largest contribution to $C_1$, have a unique  parent of order $m$, however this parent is not necessarily in $\{1,\dots,m\}$.
%{\color{red}Every vertex contributes one to the size of the component if it is in the component and zero otherwise. So what exactly do you count if you say the vertex with the largest contribution?  Also: Is $C_1$ the largest component in $\mathscr G_n$ or the component of $\mathcal G_n^{_{(2)}}$ but then for which $\mu$? We have not shown that this is the same. How do you define parent of a vertex? I would have thought it is defined as any element in $\{1,\ldots,m\}$ which is at minimal distance to the vertex, but this you say it isn't. }
%{\color{blue}The largest component in $\mathcal G_{n,\mu}^{_{(2)}}$ induces a component in $\mathscr G_n$ with size of order $sn$. What we want to show is that, for some large $M>1$,  $$\#\{x>sn\,:\,x\leftrightarrow\{1,\dots,m\}\}\approx \#\Big\{x>sn\,:\,{\exists j \le mM  \,:j\sim x \atop j\overset{1\text{-path}}{\longleftrightarrow} \{1,\dots,m\}}\Big\}\,.$$ In fact we can make $M$ large enough, such that the  ratio of the left hand side and the right hand side becomes arbitrarily close to one.}

\begin{lemma}\label{orderm}
%Let $O_M:=\{m+1,\dots, \lceil M \beta(n)^{\alpha}n\rceil \}$ for some  fixed $M> \mu$. 
%In other words $O_M$ is the set of all vertices of order $m$, which are not included in $\{1,\dots,m\}$. Then
\ \\[-5mm]
\begin{itemize}
    \item[(a)] $\displaystyle
   \frac{1}{sn}  \#\big\{x>sn\,:\,\exists i_1,i_2\le sn \text{ \emph{with} }i_1\sim x\sim i_2\big\} \longrightarrow \,0$ in probability.
\item[(b)] For every $\mu>0$ there exists $C=C(\mu)>0$ such that, for any $\varepsilon>0$ and $M>\mu$ we have,
   \begin{align*}
    \mathbb P \Big(
      \#\big\{x>sn\,:\, & x\overset{1\text{\emph{-path}}}{\longleftrightarrow}\{1,\dots,m\} \text{\emph{ and }} \\  & x\not\sim \{1,\dots, \lfloor M \beta(n)^{\alpha}n\rfloor\}\big\} > \varepsilon sn\Big) \le C  \varepsilon^{-1} M^{1-2\gamma}\,.
\end{align*}
\end{itemize}
\end{lemma}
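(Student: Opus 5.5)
Both parts are first-moment estimates followed by Markov's inequality, using the explicit connection probabilities $p_{ij}=\beta(n)(i\wedge j)^{-\gamma}(i\vee j)^{\gamma-1}$ together with Lemma~\ref{1con}.

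\emph{Part (a).} We bound the expected number of $x>sn$ with two neighbours $i_1,i_2\le sn$ by a sum over triples:
\begin{align*}
\mathbb E\big[\#\{x>sn:\exists i_1,i_2\le sn,\ i_1\sim x\sim i_2\}\big]
\le \sum_{x>sn}\sum_{i_1,i_2\le sn}\beta(n)^2 i_1^{-\gamma}i_2^{-\gamma}x^{2(\gamma-1)}
\le C\beta(n)^2 (sn)^{2-2\gamma}\, n^{2\gamma-1}.
\end{align*}
Since $s=\beta(n)^{\alpha/2}$ and $\alpha(1-\gamma)=\alpha/2-1$, this equals $C\,n\,\beta(n)^{2+\alpha(1-\gamma)}=C\,sn\,\beta(n)$. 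Hence it is $o(sn)$, because $\beta(n)\to0$, and Markov's inequality gives (a).

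\emph{Part (b).} Write $M_n:=\lfloor M\beta(n)^\alpha n\rfloor$. Unfolding Definition~\ref{1path}, the event that $x\overset{1\text{-path}}{\longleftrightarrow}\{1,\dots,m\}$ while $x\not\sim\{1,\dots,M_n\}$ requires a vertex $j\in\{m+1,\dots,\lfloor sn\rfloor\}$ with $x\sim j$ and $j\overset{1\text{-path}}{\longleftrightarrow} i$ for some $i\le m$. On our event $x\not\sim j$ for every $j\le M_n$, so necessarily $j>M_n$. By the union bound, independence of the edge $x\sim j$ from the one-path event (which uses edges not incident to $x$, up to negligible overlaps that only help), and the first estimate of Lemma~\ref{1con} combined with \eqref{simpleform},
\begin{align*}
\mathbb E[\#\cdots]\le \sum_{x>sn}\sum_{M_n<j\le sn}\sum_{i\le m}\beta(n)x^{\gamma-1}j^{-\gamma}\, c\,\tfrac{\beta(n)^2n^{2\gamma-1}}{2\gamma-1}\, i^{-\gamma}j^{-\gamma}.
\end{align*}
We then evaluate the three sums separately:
\begin{itemize}
\item the sum over $x$ gives at most $C n^\gamma$;
\item the sum over $i$ gives $C m^{1-\gamma}$;
\item the sum over $j>M_n$ of $j^{-2\gamma}$ gives $C M_n^{1-2\gamma}=C M^{1-2\gamma}(\beta(n)^\alpha n)^{1-2\gamma}$.
\end{itemize}
Collecting powers, $m^{1-\gamma}=\mu^{1-\gamma}(\beta(n)^\alpha n)^{1-\gamma}$, so the total is at most
$$C(\mu)\,M^{1-2\gamma}\,\beta(n)^{3}n^{3\gamma-1}(\beta(n)^\alpha n)^{2-3\gamma}=C(\mu)M^{1-2\gamma}\,n\,\beta(n)^{3+\alpha(2-3\gamma)}.$$
Since $\alpha(2\gamma-1)=2$, the exponent is $3+\alpha(2-3\gamma)=1+\alpha(1-\gamma)=\alpha/2$. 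The bound is therefore $C(\mu)M^{1-2\gamma}sn$, and Markov's inequality yields the probability bound $C\varepsilon^{-1}M^{1-2\gamma}$.

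The main obstacle is the independence and overlap step in part (b). The event $x\sim j$ must be separated from the one-path event $j\overset{1\text{-path}}{\longleftrightarrow}i$, whose path could pass through $x$ itself. To handle this, I would count the full path $x\,j\,x_l\,y_{l-1}\cdots x_1\,i$ as a self-avoiding path and apply the BK/van den Berg–Kesten style union bound over distinct vertices. This reproduces exactly the product bound displayed above, so no separate treatment of the overlapping case is needed.
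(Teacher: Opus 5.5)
Part (a) is the paper's argument, and writing the bound as $C\,sn\,\beta(n)$ makes the rate explicit. In part (b) your computation and the final bound $C(\mu)M^{1-2\gamma}sn$ also match the paper. You are correct, but you handle the dependence between $\{x\sim j\}$ and $\{j\overset{1\text{-path}}{\longleftrightarrow}i\}$ differently, and one step needs to be made explicit.

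\textbf{How the two routes differ.} The paper splits according to whether the one-path passes through $x$.
\begin{itemize}
\item If it does, $x$ has two neighbours in $\{1,\dots,\lfloor sn\rfloor\}$. It is then counted in part (a) and contributes $o(sn)$.
\item If it does not, the two events are independent, and Lemma~\ref{1con} is used as a black-box probability bound.
\end{itemize}
You instead take a first moment over self-avoiding witness paths $x\,j\,x_l\,y_{l-1}\cdots x_1\,i$. Their edges are distinct, hence independent, so the expectation is directly a sum of products of edge probabilities; no BK inequality is needed. This uses the path-sum estimate inside the proof of Lemma~\ref{1con} rather than its statement. In return it avoids invoking part (a) and the extra $o(sn)$ term.

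\textbf{The step to state explicitly.} You need to say why a self-avoiding witness always exists. Your earlier remark that $\{x\sim j\}$ is independent of the one-path event is false as written: if $x=x_l$, the one-path uses the edge $\{x,j\}$ itself, and the two increasing events are positively correlated, so the product is not an upper bound by itself. The reduction goes as follows.
\begin{itemize}
\item Take a one-path from $j$ of minimal length. By the index ranges, the only possible repeated vertex in $x\,j\,x_l\cdots x_1\,i$ is $x=x_k$ for some $k$.
\item The case $k=1$ is impossible, because $x\not\sim i$ and $i\le m\le\lfloor M\beta(n)^\alpha n\rfloor$.
\item For $k\ge 2$ you have $x\sim y_{k-1}$. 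The defining event then forces $y_{k-1}>\lfloor M\beta(n)^\alpha n\rfloor$, and $x\,y_{k-1}\,x_{k-1}\cdots x_1\,i$ is a shorter self-avoiding witness.
\end{itemize}
With this reduction spelled out, your argument is complete.
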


\begin{proof}
    We have 
    \begin{align*}
        \mathbb E\big[\#\{x>sn\,:\,\exists i_1,i_2\le sn \text{ with }i_1\sim x\sim i_2\}\big]\le & \sum_{x>sn}\sum_{i_1,i_2\le sn}\beta(n)^2 i_1^{-\gamma}i_2^{-\gamma}x^{2(\gamma-1)}
        \\ \le & C \beta(n)^{2+\tfrac{2(1-\gamma)}{2\gamma-1}}n =o(sn)\,.
    \end{align*}
    By Markov's inequality, this already yields statement $(a)$ of the lemma.\smallskip \\
    \noindent    For the set of vertices in $(b)$ we have by definition
    \begin{align*}
        \#\big\{x>sn\,:\, & x\overset{1\text{-path}}{\longleftrightarrow}\{1,\dots,m\} \text{ and } x\not\sim \{1,\dots, \lfloor M \beta(n)^{\alpha}n\rfloor\}\big\}
       \\ &  \le  \#\bigg\{x>sn\,:\,{\exists y\in\{\lfloor M \beta(n)^{\alpha}n\rfloor+1,\dots,\lfloor sn \rfloor\} \atop x\sim y, \;y\overset{_{1\text{-path}}}{\longleftrightarrow}\{1,\dots,m\}} \bigg\} \,.
    \end{align*}
    Taking expectations, for fixed $x$ the event \smash{$\{ \exists y\in\{\lfloor M \beta(n)^{\alpha}n\rfloor+1,\dots,\lfloor sn \rfloor \} \colon x\sim y,$} \smash{$y\overset{_{1\text{-path}}}{\longleftrightarrow}\{1,\dots,m\}\}$}
is contained in the union of the  events that there is a one-path from
    $\{\lfloor M \beta(n)^{\alpha}n\rfloor+1,\dots,\lfloor sn \rfloor \}$ to $\{1,\dots,m\}$ using the vertex $x$ and the event that there is such a one-path not using the vertex $x$. If a one-path uses vertex~$x$, then $x$ is a one-connector, the number of which is $o(sn)$ in probability by $(a)$. Otherwise, we use a union bound in $y$ and observe that for fixed $y$ the events $\{x\sim y \}$ and $\{y\overset{_{1\text{-path}}}{\longleftrightarrow}\{1,\dots,m\}$ not over $x\}$ are independent.
    Thus, by Lemma~\ref{1con}, the expectation of the right hand side can be bounded from above by 
    \begin{align*}
        \sum_{x>sn}  & \sum_{y=\lfloor M \beta(n)^{\alpha}n\rfloor+1}^{\lfloor sn \rfloor}\sum_{z\le m} \mathbb P(x\sim y)\mathbb P(y\overset{_{1\text{-path}}}{\longleftrightarrow}z) + o(sn)
      \\ & \le  c\sum_{x>sn} \sum_{y=\lfloor M \beta(n)^{\alpha}n\rfloor+1}^{\lfloor sn \rfloor}\sum_{z\le m}\beta(n)^3n^{2\gamma-1}x^{\gamma-1}y^{-2\gamma}z^{-\gamma} +o(sn)
      \\ & \le C\beta(n)^{\tfrac{4\gamma-1}{2\gamma-1}}n^{2\gamma}\big(M  \beta(n)^{\tfrac{2}{2\gamma-1}}n\big)^{1-2\gamma}+o(sn)\,,
    \end{align*}
    for some $C>0$ depending only on  $\mu$ and~$\gamma$.
    As
    $\beta(n)^{\tfrac{4\gamma-1}{2\gamma-1}}n^{2\gamma} \beta(n)^{-2}n^{1-2\gamma}=sn$,
    the statement~$(b)$ follows by Markov's inequality.
\end{proof}
%\pmar{{\color{blue} \noindent Die "Konstante" verhält sich wie $C\asymp \mu^{1-\gamma}$. Daher brauchen wir nur $M>> \mu^{\frac{1-\gamma}{2\gamma-1}}$.}{\color{red} Ich dachte $\mu$ sei jetzt festgehalten?}{\color{blue}genau, bei festgehaltenem $\mu$ kannst du $M$ so groß machen wie es das Lemma behauptet ( und es muss mindestens so groß sein wie $\mu^{\frac{1-\gamma}{2\gamma-1}}$ .} }

Let $K$ be a component of $\mathcal G^{(2)}_{n,\mu}$ and denote by $\mathcal C(K)$ its component in $\mathscr G_n$. 
%The following lemma enables us to bound the size of the component of the giant in $\mathcal G^{(2)}_{n,\mu}$.

\begin{lemma}\label{uppersuper}
    For any $\varepsilon,\delta\in(0,1)$ there exists $M_0$  such that, for all $M\ge M_0$,
    $$\liminf_{n\to\infty}\mathbb P\Big(\#\mathcal C(C_1(\mathcal G^{(2)}_{n,\mu})) \le  \sum_{i\in C_1(\mathcal G^{(2)}_{n,M})} {\mathrm{outdeg}}(i) \,+\varepsilon sn\Big) \ge 1-\delta,$$
    and 
    $$\lim_{n\to\infty} \frac{1}{sn}\sum_{i\in C_1(\mathcal G^{(2)}_{n,M})} {\mathrm{outdeg}}(i)  =
     M^{1-\gamma} \int_0^1 f(x) \rho(\kappa_M , x)\,\text{d}x\,,$$
     in probability.
\end{lemma}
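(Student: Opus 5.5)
The second statement is Lemma~\ref{neigh_asymp} with $\mu$ replaced by $M$. Lemma~\ref{nnc} gives $\frac1{sn}\sum_{i\in C_1(\mathcal G^{(2)}_{n,M})}\mathrm{outdeg}(i)=\frac{M^{1-\gamma}}{m_M}\sum_{i\in C_1}f(i/m_M)+o(1)$, with $m_M=\lfloor Mn\beta(n)^\alpha\rfloor$. Then \cite[Theorem 9.10]{rjb} identifies the limit exactly as in the proof of Lemma~\ref{neigh_asymp}. So all the work is in the first inequality.

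For the first inequality I would decompose $\mathcal C(C_1(\mathcal G^{(2)}_{n,\mu}))$ into negligible pieces and one main piece. By Lemma~\ref{compofm}, up to $o(sn)$ vertices, every vertex of this component is either in $\{1,\dots,m\}$, a direct neighbour of $\{1,\dots,m\}$, or joined to $\{1,\dots,m\}$ by a one-path. Lemma~\ref{3.12} shows that those with index at most $sn$ are negligible. By Lemma~\ref{orderm}(a) the one-connectors are negligible, and $m=o(sn)$. By Lemma~\ref{orderm}(b), outside an event of probability at most $C\varepsilon^{-1}M^{1-2\gamma}$, which is below $\delta$ once $M\ge M_0$ because $\gamma>\frac12$, at most $\varepsilon sn$ of the remaining vertices $x>sn$ fail to be adjacent to $\{1,\dots,\lfloor M\beta(n)^\alpha n\rfloor\}$. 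Hence, with probability at least $1-\delta$ for large $n$, all but $3\varepsilon sn$ vertices of the component are direct neighbours of some $j\le m_M$ with $j$ itself in $\mathcal C(C_1(\mathcal G^{(2)}_{n,\mu}))$.

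It remains to show that such a $j$ lies in $C_1(\mathcal G^{(2)}_{n,M})$, apart from negligibly many neighbours. Since $m_\mu\ll sn$, the core graphs for $\mu<M$ use the same one-connector threshold $sn$, so $\mathcal G^{(2)}_{n,\mu}$ is the subgraph of $\mathcal G^{(2)}_{n,M}$ induced on $\{1,\dots,m_\mu\}$. Thus $C_1(\mathcal G^{(2)}_{n,\mu})$ lies in one component $K$ of $\mathcal G^{(2)}_{n,M}$. Because $\#C_1(\mathcal G^{(2)}_{n,\mu})$ is of order $m_\mu$ while $\#C_2(\mathcal G^{(2)}_{n,M})=o(m_M)$ by Lemma~\ref{lem1}, we get $K=C_1(\mathcal G^{(2)}_{n,M})$ with high probability.

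This leaves the vertices $j\le m_M$ that are connected to $C_1(\mathcal G^{(2)}_{n,\mu})$ in $\mathscr G_n$ but not in $\mathcal G^{(2)}_{n,M}$. Their connection must use a path outside the core structure. I expect this to be the main obstacle. I would first rerun the path-counting of Lemmas~\ref{pathlength} and~\ref{compofm} with $m_M$ in place of $m_\mu$. That gives $\sum_{j}\mathrm{outdeg}(j)=o(sn)$ in expectation over such $j$, because the connecting paths then have an additional factor of order $\beta(n)\log(1/s)$, exactly as in the $X^{\vee},X^{\wedge},X^{\downarrow},X^{\uparrow}$ estimates. A cheaper alternative bounds the outdegrees of the $o(m_M)$ vertices in the other core components by the total outdegree of the non-giant part, using Lemma~\ref{nnc} restricted to complements. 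Combining the pieces and rescaling $\varepsilon$ gives the claim.
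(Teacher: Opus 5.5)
Your treatment of the second claim is fine, and so are the reduction via Lemmas~\ref{compofm}, \ref{3.12} and~\ref{orderm} to direct neighbours of vertices $j\le m_M$ lying in $\mathcal C(C_1(\mathcal G^{(2)}_{n,\mu}))$ and the inclusion $C_1(\mathcal G^{(2)}_{n,\mu})\subset C_1(\mathcal G^{(2)}_{n,M})$. The gap is the step you flag as the main obstacle. You must bound by $\varepsilon sn$ the outdegree mass of the $j\le m_M$ in $\mathcal C(C_1(\mathcal G^{(2)}_{n,\mu}))\setminus C_1(\mathcal G^{(2)}_{n,M})$. That is the whole content of the first claim, and neither of your suggestions proves it. The ``cheaper alternative'' rests on a false premise. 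Lemma~\ref{lem1} makes each non-giant component of $\mathcal G^{(2)}_{n,M}$ of size $o(m_M)$, but their union has about $(1-\int_0^1\rho(\kappa_M,x)\,dx)\,m_M$ vertices. By the argument of Lemma~\ref{nnc}, its outdegree mass is about $M^{1-\gamma}\int_0^1 f(x)(1-\rho(\kappa_M,x))\,dx\cdot sn\ge\big(\frac{M^{1-\gamma}}{\gamma(1-\gamma)}-\theta\big)sn$, which grows with $M$.

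Your first suggestion misplaces the cost.
\begin{itemize}
\item Among the bad $j$ are vertices of $O_M=\{m+1,\dots,m_M\}$ joined to $C_1(\mathcal G^{(2)}_{n,\mu})$ by a one-path with some even step $y\in(m_M,sn]$. Such a path is a chain of one-connectors throughout, so it contains none of the $X^\vee,X^\wedge,X^\downarrow,X^\uparrow$ defects. Rerunning Lemma~\ref{compofm} with $m_M$ does not remove it, and no factor $\beta(n)\log(1/s)$ appears.
\item The only gain comes from the detour through $y>m_M$, namely $\beta(n)^2n^{2\gamma-1}\sum_{y>m_M}y^{-2\gamma}\asymp M^{1-2\gamma}$. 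So this part contributes order $M^{1-2\gamma}sn$ rather than $o(sn)$, which is why the lemma needs $M\ge M_0$ and holds only with probability $1-\delta$.
\item $\mathrm{outdeg}(j)$ is correlated with the connecting path: its first edge $\{j,x\}$, $x>sn$, is counted in $\mathrm{outdeg}(j)$.
\item Your residual set is larger than the one the paper bounds after \eqref{bipartle}. It also contains core vertices whose connection to $C_1(\mathcal G^{(2)}_{n,\mu})$ runs through other components of $\mathcal G^{(2)}_{n,M}$. There, path counts along chains of core edges cannot be summed, since the expected number of core paths of length $k$ grows like $(c\,\beta(n)^2n^{2\gamma-1})^k$ with $\beta(n)^2n^{2\gamma-1}\to\infty$.
\end{itemize}

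The ingredient you are missing is the paper's estimate \eqref{3.17}--\eqref{3.18}.
\begin{itemize}
\item For $j\in O_M\setminus O_M'$ it writes the connection as the disjoint occurrence $\{j\overset{1\text{-path}}{\longleftrightarrow}y\}\circ\{y\overset{1\text{-path}}{\longleftrightarrow}C_1(\mathcal G^{(2)}_{n,\mu})\}$ with $y\in(M\beta(n)^\alpha n,sn]$.
\item It splits $\mathrm{outdeg}(j)=\sum_{x>j}\mathbbm{1}_{\{x\sim j\}}$ according to whether either path uses the edge $\{x,j\}$, as in \eqref{(3.20)}--\eqref{(3.22)}.
\item It applies the BK inequality and Lemma~\ref{1con} to get an expectation of at most $cM^{1-2\gamma}sn+o(sn)$, after which Markov's inequality gives \eqref{3.17}.
\end{itemize}
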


\begin{proof}
Let $O_M:=\{m+1,\dots, \lfloor M \beta(n)^{\alpha}n\rfloor \}$, roughly speaking $O_M$ is the set of all vertices of order $m$, which are not included in $\{1,\dots,m\}$. 
By Lemma \ref{3.12} and \ref{orderm}, for any $\varepsilon,\delta\in(0,1)$ we can choose $M$ large enough, such that, for all sufficiently large $n$,
\begin{align}\label{bipartle}
    \#\mathcal C(C_1(\mathcal G^{(2)}_{n,\mu})) \le  \sum_{i\in C_1(\mathcal G^{(2)}_{n,\mu})} \text{outdeg}(i) \,+\sum_{j\in O_M \atop j\overset{1\text{-path}}{\longleftrightarrow}C_1(\mathcal G^{(2)}_{n,\mu})}\text{outdeg}(j)+\varepsilon sn
\end{align}
holds with probability at least $1-\delta$.
%Also \begin{align}\label{bipartge}
%    \#\mathcal C(C_1(\mathcal G^{(2)}_n(m))) \ge  \sum_{i\in C_1(\mathcal G^{(2)}_n(m))} \text{outdeg}(i) +\sum_{j\in O_M \atop j\overset{1\text{-path}}{\longleftrightarrow}C_1(\mathcal G^{(2)}_n(m))}\text{outdeg}(j)+o( sn)
%\end{align}
%with high probability {\color{cyan}as $n\to\infty$.} 
%{\color{red}w.h.p. ist bereits als Grenzwertaussage definiert, "as $n\to\infty$" ist also redundant.} {\color{blue}Ich möchte damit sagen, dass "with high probability as $n\to\infty$" gemeint ist und nicht "with high probability as $\mu\to\infty$" oder sowas. Das ist zwar redundant aber doch hilfreich.}
%\pagebreak[3]
\smallskip

%\noindent 
%Recall that $m=%\lfloor \mu \beta(n)^{2/(2\gamma-1)}n\rfloor = 
%\lfloor\mu \beta(n)^{\alpha}n\rfloor$. 
Recall the definition of one-paths from Definition~\ref{1path} and
in particular the implicitly defined even steps $y_1,\ldots, y_{l-1}
\in\{m+1,\ldots, \lfloor sn \rfloor\}$. Let
$$O_M':=\{ j\in O_M \colon j\overset{1\text{-path}}{\longleftrightarrow}C_1(\mathcal G^{(2)}_{n,\mu}) \text{ with }
y_1,\ldots, y_{l-1} \in O_M\}$$
be the vertices in $O_M$ connected to $\mathcal G^{(2)}_{n,\mu}$
by one-paths using only vertices in $O_M$ at the even steps.
If for all $\varepsilon,\delta \in (0,1)$ we can make $M$ large enough, such that
\begin{align}\label{3.17}
    \mathbb P\bigg(\sum_{{j\in O_M\setminus O_M' \colon}\atop{ j\overset{1\text{-path}}{\longleftrightarrow}C_1(\mathcal G^{(2)}_{n,\mu})}} \text{outdeg}(j) > \varepsilon sn\bigg) < \delta,
\end{align}
then the second sum in (\ref{bipartle}) can be reduced to $j\in O_M'$ with 
$j\overset{1\text{-path}}{\longleftrightarrow}C_1(\mathcal G^{_{(2)}}_{n,\mu})$
and these $j$ are  in the graph
$\mathcal G^{_{(2)}}_{n,M}$ connected to 
$C_1(\mathcal G^{_{(2)}}_{n,\mu})$. As $C_1(\mathcal G^{_{(2)}}_{n,\mu})\subset C_1(\mathcal G^{_{(2)}}_{n,M})$ by the uniqueness of the giant in supercritical inhomogeneous random graphs,
we can replace the sums in (\ref{bipartle}) by a single sum over vertices $i\in C_1(\mathcal G^{_{(2)}}_{n,M})$. \pagebreak[3]
\smallskip

 It remains to show \eqref{3.17}. Let $\varepsilon,\delta \in (0,1)$. For $j\in O_M\setminus O_M'$ with $j\overset{_{1\text{-path}}}{\longleftrightarrow}C_1(\mathcal G^{_{(2)}}_{n,\mu})$. 
 %We may assume that the shortest path from $j$ to $C_1(\mathcal G^{(2)}_{n,\mu})$ has length greater than two and uses only vertices with index greater than $M\beta(n)^\alpha n$. \pmar{So nicht richtig.} 
 there exists \smash{$y\in\{\lfloor M \beta(n)^{\alpha}n\rfloor+1,\dots,\lfloor sn\rfloor\}$} such that 
 $$\{j\overset{1\text{-path}}{\longleftrightarrow}y\} \circ \{y\overset{{1\text{-path}}}{\longleftrightarrow}C_1(\mathcal G^{(2)}_{n, \mu}) \},$$ where $\circ$ denotes the disjoint occurrence 
 of events. It therefore suffices to show that, for large enough $M$, 
\begin{align}\label{3.18}
    \sum_{j\in O_M } \sum_{y=\lfloor M \beta(n)^{\alpha}n\rfloor+1}^{\lfloor sn\rfloor} \mathbbm{1}_{\{j\overset{_{1\text{-path}}}{\longleftrightarrow}y \} \,\circ \,\{y\overset{_{1\text{-path}}}{\longleftrightarrow}C_1(\mathcal G^{(2)}_{n,\mu})\}}\text{outdeg}(j)\le \varepsilon s n
\end{align}
with probability $1-\delta$, for all sufficiently large $n$.
\pagebreak[3]\smallskip

Recall that $\text{outdeg}(j)=\sum_{x>j}\mathbbm{1}_{\{x\sim j\}}$. If 
$x \le sn $ the edge $\{x,j\}$ cannot be used in a one-path and so the events $\{x\sim j\}$ and \smash{$\{j\overset{_{1\text{-path}}}{\longleftrightarrow}y\}$}, as well as $\{x\sim j\}$ and \smash{$\{y\overset{{1\text{-path}}}
{\longleftrightarrow}C_1(\mathcal G^{(2)}_{n, \mu}) \}$}  are independent. 
If $x>sn$ we have 
\begin{align*}
    \mathbb P & \big( (\{j\overset{_{1\text{-path}}}{\longleftrightarrow}y\}  \,\circ \, \{y\overset{_{1\text{-path}}}{\longleftrightarrow}C_1(\mathcal G^{(2)}_{n,\mu})\}) \cap \{ x \sim j\}  \big)
    \\ & = \mathbb P \big(x \sim j \big) \mathbb P \big( \{j\overset{_{1\text{-path}}}{\longleftrightarrow}y \} \,\circ \, \{y\overset{_{1\text{-path}}}{\longleftrightarrow}C_1(\mathcal G^{(2)}_{n,\mu}) \} \,\big|\,x \sim j \big),
\end{align*}  
and we can decompose the event in the conditional probability
\begin{align}
     \{j  \overset{_{1\text{-path}}}{\longleftrightarrow} & y \} \,\circ \, \{y\overset{_{1\text{-path}}}{\longleftrightarrow}C_1(\mathcal G^{(2)}_{n,\mu}) \} \notag
   \\  \subseteq & \big( \{j\overset{_{1\text{-path}}}{\longleftrightarrow}y \text{ not using } x\sim j \} \,\circ \, \{y\overset{_{1\text{-path}}}{\longleftrightarrow}C_1(\mathcal G^{(2)}_{n,\mu})\text{ not using } x\sim j \} \big) \label{(3.20)}
   \\ & \cup \big( \{j\overset{_{1\text{-path}}}{\longleftrightarrow}y \text{ using } x\sim j \} \,\circ \, \{y\overset{_{1\text{-path}}}{\longleftrightarrow}C_1(\mathcal G^{(2)}_{n,\mu})\text{ not using } x\sim j \} \big) \label{(3.21)}
    \\ & \cup \big( \{j\overset{_{1\text{-path}}}{\longleftrightarrow}y \text{ not using } x\sim j \} \,\circ \, \{y\overset{_{1\text{-path}}}{\longleftrightarrow}C_1(\mathcal G^{(2)}_{n,\mu})\text{ using } x\sim j \} \big),\label{(3.22)}
\end{align}
as not both paths can use the edge $\{x, j\}$ for the disjoint occurrence. Conditioning on the event $x\sim j$, the event in line \eqref{(3.20)} is unaffected, the first event in line \eqref{(3.21)} becomes
\smash{$\{x\overset{_{1\text{-path}}}{\longleftrightarrow}y$ not using $x\sim j$ $\}$} and the second event in line \eqref{(3.22)}  
%implies the existence of a one-path from $y$ to $j$ using $x\sim j$ 
implies the existence of a one-path from $y$ to $x$ or from  $y$ to $j$ not using $x\sim j$. Hence, 
\begin{align*}    
     \mathbb P \big( & \{j\overset{_{1\text{-path}}}{\longleftrightarrow}y \} \,\circ \, \{y\overset{_{1\text{-path}}}{\longleftrightarrow}C_1(\mathcal G^{(2)}_{n,\mu}) \} \,\big|\,x \sim j \big)
    \\ & \leq \,  \mathbb P \big(\{j\overset{_{1\text{-path}}}{\longleftrightarrow}y\text{ not using } x\sim j \} \circ \{y\overset{_{1\text{-path}}}{\longleftrightarrow}C_1(\mathcal G^{(2)}_{n,\mu})\text{ not using } x\sim j \} \big)
    \\ & \qquad +\mathbb P \big( \{x \overset{_{1\text{-path}}}{\longleftrightarrow} y \text{ not using } x\sim j \} \,\circ \, \{y\overset{_{1\text{-path}}}{\longleftrightarrow}C_1(\mathcal G^{(2)}_{n,\mu})\text{ not using } x\sim j  \}\big) 
    \\ & \qquad + \mathbb P \big(\{j\overset{_{1\text{-path}}}{\longleftrightarrow}y \text{ not using } x\sim j \} \circ \{y \overset{_{1\text{-path}}}{\longleftrightarrow} x \text{ not using } x\sim j \} \big)
    \\ & \qquad + \mathbb P \big(\{j\overset{_{1\text{-path}}}{\longleftrightarrow}y \text{ not using } x\sim j \} \circ \{j\overset{_{1\text{-path}}}{\longleftrightarrow}y \text{ not using } x\sim j \} \big) 
    \\ & \le  \Big( \mathbb P  \big(j\overset{_{1\text{-path}}}{\longleftrightarrow}y \text{ not using } x\sim j  \big)  +  \mathbb P \big( y \overset{_{1\text{-path}}}{\longleftrightarrow} x \big) \Big)\mathbb P \big( y\overset{_{1\text{-path}}}{\longleftrightarrow}C_1(\mathcal G^{(2)}_{n,\mu})\big) 
    \\ & \qquad + \mathbb P \big(j\overset{_{1\text{-path}}}{\longleftrightarrow}y \text{ not using } x\sim j \big)\Big(\mathbb P \big( y \overset{_{1\text{-path}}}{\longleftrightarrow} x  \big)+\mathbb P \big(j\overset{_{1\text{-path}}}{\longleftrightarrow}y \text{ not using } x\sim j \big)\Big),
\end{align*}
where we used the BK-inequality in the last step.
% Continuing  
%\begin{align*}    
%    & \mathbb P \big(x \sim j \big) \mathbb P \big( \{j\overset{_{1\text{-path}}}{\longleftrightarrow}y \} \,\circ \, \{y\overset{_{1\text{-path}}}{\longleftrightarrow}C_1(\mathcal G^{(2)}_{n,\mu}) \} \,\big|\,x \sim j \big)
%    \\ & \leq \, \mathbb P \big( x \sim j \big) \mathbb P \big(\{j\overset{_{1\text{-path}}}{\longleftrightarrow}y\text{ not using } x\sim j \} \circ \{y\overset{_{1\text{-path}}}{\longleftrightarrow}C_1(\mathcal G^{(2)}_{n,\mu})\} \big)
%    \\ & \qquad +\mathbb P \big( x \sim j \big) \mathbb P \big( \{y \overset{_{1\text{-path}}}{\longleftrightarrow} x \} \,\circ \, \{y\overset{_{1\text{-path}}}{\longleftrightarrow}C_1(\mathcal G^{(2)}_{n,\mu}) \}\big) 
%    \\ & \qquad {\color{blue}+ \mathbb P \big( x \sim j \big)\mathbb P \big(\{j\overset{_{1\text{-path}}}{\longleftrightarrow}y \text{ not using } x\sim j \} \circ \{y \overset{_{1\text{-path}}}{\longleftrightarrow} x \} \big) }
%    \\ & \le \mathbb P \big( x \sim j \big) \Big( \mathbb P  \big(j\overset{_{1\text{-path}}}{\longleftrightarrow}y \text{ not using } x\sim j  \big)  +  \mathbb P \big( y \overset{_{1\text{-path}}}{\longleftrightarrow} x \big) \Big)\mathbb P \big( y\overset{_{1\text{-path}}}{\longleftrightarrow}C_1(\mathcal G^{(2)}_{n,\mu})\big) 
%    \\ & \qquad {\color{blue}+ \mathbb P \big( x \sim j \big)\mathbb P \big(j\overset{_{1\text{-path}}}{\longleftrightarrow}y \text{ not using } x\sim j \big)\mathbb P \big( y \overset{_{1\text{-path}}}{\longleftrightarrow} x  \big) },
%\end{align*} where we used the BK-inequality in the last step.\smallskip
By Lemma~\ref{1con} and the above there exists  $c>0$ such that the expectation of the left-hand side in (\ref{3.18}) is bounded from above by 
\begin{align*}
    \sum_{j\in O_M} & \sum_{y=\lfloor M \beta(n)^{\alpha}n\rfloor}^{\lfloor sn\rfloor}\sum_{x=j+1}^n \sum_{i=1}^m c  \mathbb P \big( x \sim j \big) \Big(\mathbb P(j\overset{_{1\text{-path}}}{\longleftrightarrow}y) +  \mathbbm{1}_{\{x>sn\}}\mathbb P \big( y \sim x \big)\Big)\mathbb P(y\overset{_{1\text{-path}}}{\longleftrightarrow}i)
    \\ & \qquad+  \sum_{j\in O_M}  \sum_{y=\lfloor M \beta(n)^{\alpha}n\rfloor}^{\lfloor sn\rfloor}\sum_{x=j+1}^n\mathbb P \big( x \sim j \big)\mathbb P \big(j\overset{_{1\text{-path}}}{\longleftrightarrow}y \big)\Big(\mathbb P \big( y \sim  x  \big)+ \mathbb P \big(j\overset{_{1\text{-path}}}{\longleftrightarrow}y \big)\Big)
    \\ & \le \sum_{j\in O_M} \sum_{y=\lfloor M \beta(n)^{\alpha}n\rfloor}^{\lfloor sn\rfloor}\sum_{i=1}^m c \beta(n)^4 n^{4\gamma-2}j^{-\gamma}y^{-2\gamma}i^{-\gamma}\Big(\beta(n)n^\gamma j^{-\gamma}+1 \Big)
      \\ & \qquad +  \sum_{j\in O_M}  \sum_{y=\lfloor M \beta(n)^{\alpha}n\rfloor}^{\lfloor sn\rfloor}\sum_{x=j+1}^n c \beta(n)^4 n^{2\gamma-1} x^{\gamma-1} j^{-2\gamma} y^{-2\gamma}\Big( x^{\gamma-1} + \beta(n)n^{2\gamma-1}j^{-\gamma}\Big)
      \end{align*}
      \begin{align*}
    \\ &\le   c \beta(n)^{5}n^{5\gamma-2}
    m^{2-3\gamma} 
    \big(M  \beta(n)^{\tfrac{2}{2\gamma-1}}n\big)^{1-2\gamma} + c \beta(n)^4 n^{4\gamma-2} m^{1-\gamma } \big(M  \beta(n)^{\tfrac{2}{2\gamma-1}}n\big)^{2-3\gamma} 
    \\ & \qquad + c\beta(n)^4 n^{4\gamma-2}m^{1-2\gamma} \big(M  \beta(n)^{\tfrac{2}{2\gamma-1}}n\big)^{1-2\gamma} + c \beta(n)^{5}n^{5\gamma-2}
    m^{1-3\gamma} 
    \big(M  \beta(n)^{\tfrac{2}{2\gamma-1}}n\big)^{1-2\gamma}
    \\  & \le c\beta(n)^{\tfrac{4\gamma-1}{2\gamma-1}}n^{2\gamma}\big(M  \beta(n)^{\tfrac{2}{2\gamma-1}}n\big)^{1-2\gamma} + c \beta(n)^{\tfrac{6\gamma-2}{2\gamma-1}}n^{3\gamma-1}\big(M  \beta(n)^{\tfrac{2}{2\gamma-1}}n\big)^{2-3\gamma} 
    \\ &  \qquad + c M^{1-2\gamma}+ c M^{1-2\gamma} s^{-1}
    \\ &=cM^{1-2\gamma}sn + o(sn)\,.
\end{align*}
%Now let $i,j\in\{1,\dots,m\}\cup O_M$ and $K_\gamma:=\lfloor\tfrac{1-\gamma}{2\gamma-1}\big\rfloor$. We write $i\overset{O_M}{\longleftrightarrow}j$ for the event, that there exists an $l\in\{1,\dots, K_\gamma\}$ and distinct vertices $y_1,\dots,y_{l-1}\in O_M$ and $x_1,\dots,x_{l}\in\{sn+1,\dots,n\}$ such that $i \sim x_0$, $j\sim x_{l}$ and $x_{k} \sim y_{k} \sim x_{k+1}$ for all $k\in\{1,\dots,l-1\}$. 
Having thus proved \eqref{3.17} we get, for any $\varepsilon,\delta\in(0,1)$ an $M_0=M_0(\varepsilon,\delta)$ such that for $M\ge M_0$ and all $n$ large enough,
$$\#\mathcal C(C_1(\mathcal G^{(2)}_{n, \mu})) \le  \sum_{i\in C_1(\mathcal G^{(2)}_{n,M})} \text{outdeg}(i) + \varepsilon sn .$$
The second claim follows from Lemma~\ref{neigh_asymp}.
\end{proof}

By Lemma~\ref{uppersuper} we have, for every  $\varepsilon,\delta\in(0,1)$ and all $M$ large enough
\begin{align*}
     \limsup_{n\to\infty} \frac{1}{sn}\#\big\{x\in\{1,\dots,n\}\,:\, x \sim C_1(\mathcal G_{n,M}^{(2)}) \}  
     \leq
     M^{1-\gamma} \int_0^1 f(x) \rho(\kappa_M , x)\,\text{d}x
     +\varepsilon\,,
\end{align*} 
with probability at least $1-\delta$. By Lemma \ref{mu} the right-hand side converges to $\theta$ as $M\to\infty$. We have thus seen that the size of this component matches the lower bound arbitrarily closely. Moreover, asymptotically almost all the vertices in this component are direct neighbours of the giant in~$\mathcal G_{n,M}^{_{(2)}}$. \smallskip

 It only remains to  prove that the thus constructed component of $\mathscr G_n$
 is indeed the largest. To this end, let us first establish that any sufficiently large component has to intersect the set $\{1,\dots,m\}$.
\begin{lemma}\label{giant}
    Let $(m_n)_{n\in\mathbb N}$ be a sequence of integers such that $m_n\le n$ for all $n$ and $m_n\to\infty$. Then, with high probability,  the largest connected component which does not intersect $\{1,\ldots,m_n\}$ has size at most $\beta(n)^{-1}(n/m_n)^{1-\gamma}$.
\end{lemma}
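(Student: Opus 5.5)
The plan is a susceptibility (second moment) argument on the graph $\mathscr G_n^{>m}$ obtained by deleting the vertices $\{1,\dots,m_n\}$, with path counting done by the recursion of Lemma~\ref{1.3}. Write $m=m_n$. Let $\mathcal C^{>m}(x)$ be the component of $x$ in $\mathscr G_n^{>m}$. The components of $\mathscr G_n$ that avoid $\{1,\dots,m\}$ are components of $\mathscr G_n^{>m}$, so it suffices to bound its largest component. Set $K:=\beta(n)^{-1}(n/m)^{1-\gamma}$. Since
$$\max_{\mathcal C}\#\mathcal C^2\le\sum_{\mathcal C}\#\mathcal C^2=\sum_{x>m}\#\mathcal C^{>m}(x)=\sum_{x,y>m}\mathbbm 1_{\{x\leftrightarrow y \text{ in } \mathscr G_n^{>m}\}},$$
Markov's inequality gives
$$\mathbb P\big(\max\#\mathcal C>K\big)\le K^{-2}\sum_{x,y>m}\mathbb P(x\leftrightarrow y\text{ in }\mathscr G_n^{>m}).$$
It therefore suffices to show that the double sum is $o(K^2)$. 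If the second moment turns out to be too crude, I will pass to a higher moment $K^{-q}\,\mathbb E\sum_x\#\mathcal C^{>m}(x)^{q-1}$; this is again a path-counting quantity and can be bounded the same way via tree counting and the BK inequality.

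To bound $\mathbb P(x\leftrightarrow y)$, I will sum over self-avoiding paths. I will use exactly the functions $\mu_k$ from the proof of Lemma~\ref{pathlength}, now with all intermediate vertices restricted to $\{m+1,\dots,n\}$, i.e.\ with $s_1=s_2=\tilde s:=m/n$. The start is $\mu_1(x)\le\psi_1x^{\gamma-1}+\phi_1x^{-\gamma}$ with $\psi_1\asymp\beta(n)n^{1-\gamma}$ and $\phi_1\asymp\beta(n)n^\gamma$. The recursion \eqref{psiphi} then gives the analogue of \eqref{15}: each two steps cost a factor of order
$$\lambda_n:=\beta(n)^2\,\tilde s^{\,1-2\gamma}=\beta(n)^2(n/m)^{2\gamma-1},$$
up to logarithmic factors $\beta(n)\log(n/m)$ at odd steps. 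Summing $\mu_l(y)$ over $y>m$, the term $\psi_l$ contributes $\psi_l n^\gamma$ and the term $\phi_l$ contributes $\phi_l n^{1-\gamma}$. Hence
$$\sum_{x,y>m}\mathbb P(x\leftrightarrow y)\le n+C\sum_{l\ge1}\big(\psi_ln^\gamma+\phi_ln^{1-\gamma}\big),$$
whose leading terms are of order $n\beta(n)\big(1+\beta(n)\log(n/m)\big)$ times a geometric series in $\lambda_n$. Comparing with $K^2=\beta(n)^{-2}(n/m)^{2-2\gamma}$ will require checking that this ratio tends to $0$ whenever $m\to\infty$, using $\beta(n)\to0$.

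I expect the main obstacle to be the convergence of the geometric series, i.e.\ controlling $\lambda_n$, which is not automatically small when $m$ is small. In every application $m$ is chosen so that $\lambda_n\to0$ (for instance $m\gg n\beta(n)^\alpha$ in the critical and subcritical windows). If this is not the case, I will handle it by the same device as in Lemma~\ref{pathlength}: split paths at their first visit to $\{m+1,\dots,\tilde sn\}$ for a larger threshold $\tilde s$. Vertices below $\tilde sn$ are then treated crudely, through their degrees, which concentrate at $\beta(n)n^\gamma j^{-\gamma}\le\beta(n)(n/m)^\gamma$ by the variance bound \eqref{varj}. This explains the shape of the bound, namely a maximal degree $\beta(n)(n/m)^{\gamma}$ corrected by path factors. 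The delicate bookkeeping is to match the exponent $(n/m)^{1-\gamma}\beta(n)^{-1}$ exactly; I would first verify it in the regime $\lambda_n\to0$ before attempting the general case.
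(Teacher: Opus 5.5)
Your proposal has a genuine gap: it discards the only mechanism that makes the lemma true. You never use the edges between $\{1,\dots,m_n\}$ and $\{m_n+1,\dots,n\}$. Instead you try to show that $\mathscr G_n^{>m_n}$ itself has no component larger than $K=\beta(n)^{-1}(n/m_n)^{1-\gamma}$, and that intermediate claim is false in general.

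Take $m_n\to\infty$ with $m_n=o(n\beta(n)^{\alpha})$. The core graph on $\{m_n+1,\dots,\lfloor \mu n\beta(n)^\alpha\rfloor\}$ is then still supercritical, so $\mathscr G_n^{>m_n}$ still contains a component of size of order $n\beta(n)^{\alpha/2}$. But $n\beta(n)^{\alpha/2}/K=m_n^{1-\gamma}(n\beta(n)^{\alpha})^{\gamma}\to\infty$. Hence no moment bound for $\mathscr G_n^{>m_n}$ alone, second or higher, can succeed.

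The second-moment step already fails on the diagonal. You have $\sum_{\mathcal C}\#\mathcal C^2\ge\sum_{\mathcal C}\#\mathcal C=n-m_n$. In the regime $\beta(n)n^{\gamma-1/2}\to\infty$ one has $K^2=\beta(n)^{-2}n^{2-2\gamma}m_n^{2\gamma-2}=o(n)$. So your bound is at least $(n-m_n)/K^2\to\infty$ whenever, say, $m_n\le n/2$, and your claim that this ratio tends to $0$ whenever $m_n\to\infty$ is wrong. A susceptibility bound can never certify anything below $\sqrt{n-m_n}$, which is why the paper uses it only in the critical window (Lemma~\ref{lcc}). Also, the paper applies this lemma with $m_n=\lceil M\beta(n)^\alpha n\rceil$, where your $\lambda_n=M^{1-2\gamma}$ does not tend to $0$.

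The missing idea is the paper's.
\begin{itemize}
\item Condition on the $\sigma$-algebra $\mathscr F(m_n,n)$ of the graph on $\{m_n+1,\dots,n\}$. The edges into $\{1,\dots,m_n\}$ are independent of it.
\item Each $y>m_n$ independently avoids $\{1,\dots,m_n\}$ with probability at most $\exp\big(-\beta(n)n^{\gamma-1}(m_n^{1-\gamma}-1)/(1-\gamma)\big)$.
\item So a given component of size $k$ of $\mathscr G_n^{>m_n}$ avoids the core with probability at most $\exp(-c\,k\,\beta(n)(m_n/n)^{1-\gamma})$.
\item Finish with a union bound over the at most $n/k$ components of size $\ge k$.
\end{itemize}
No path counting is needed: large components of the restricted graph do exist, but they get absorbed into the core. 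The scale $K$ is just the reciprocal of the probability $\asymp\beta(n)(m_n/n)^{1-\gamma}$ that a vertex of index $\asymp n$ attaches to $\{1,\dots,m_n\}$. It is not a maximal degree corrected by path factors.

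One caveat so you aim at the right target. The union bound $\frac nk\exp(-c\,k\beta(n)(m_n/n)^{1-\gamma})$ is small only once $k\beta(n)(m_n/n)^{1-\gamma}$ exceeds a multiple of $\log(n/k)$. At $k=K$ the exponent stays bounded; the paper's write-up glosses over the prefactor $n/k$. What the argument actually yields is the bound $K\log n$.

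Without the logarithm the statement can fail. Take $m_n=\lfloor n/2\rfloor$ and $\beta(n)=1/\log\log n$. The graph on $\{m_n+1,\dots,n\}$ has all edge probabilities in $[\beta(n)/n,2\beta(n)/n]$, so it has $n^{1-o(1)}$ tree components of size $\lceil 10K\rceil$. Each of them independently avoids $\{1,\dots,m_n\}$ with probability bounded away from zero.
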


\begin{proof}
We have for $y>m_n$
\begin{align*}
    \log\mathbb{P}(y \not\sim\{1,\dots,m_n\})\le \sum_{x=1}^{m_n} -\beta(n)x^{-\gamma} n^{\gamma-1} \le \frac{\beta(n)n^{\gamma-1} -\beta(n)n^{\gamma-1}m_n^{1-\gamma}}{1-\gamma} \;.
\end{align*}
We now condition on the $\sigma$-algebra $\mathscr F(m_n,n)$ given by the graph restricted to
$\{m_n+1,\ldots,n\}$. Then, by a union bound over all components $\mathcal{C}$ in this graph,
\begin{align*}
\mathbb{P}\Big( \exists \mathcal{C}\subset\{m_n+1,\dots,n\}\,:\, &{|\mathcal{C}|\ge k\text{ and } \mathcal{C}\not\sim \,\{1,\dots,m_n\}}\Big) \\
& \le \mathbb E\bigg[ \sum_{\mathcal{C}\subset\{m_n+1,\dots,n\}\atop |\mathcal{C}|\ge k}  \mathbb{P}\left( \mathcal{C}\not\sim \,\{1,\dots,m_n\}\big|
\mathscr F(m_n,n)\right)\bigg].
\end{align*}
The summed probabilities are bounded uniformly in $\mathcal C$ and,
as there can be at most $n/k$ components of size at least $k$, we can continue, for some $c>0$, by
\begin{align*}
\phantom{\mathbb{P}\Big( \exists \mathcal{C}\subset} &
& \le \frac{n}{k}\,\exp\left(c\,k(\beta(n)n^{\gamma-1} -\beta(n)n^{\gamma-1}m_n^{1-\gamma}\right).
\end{align*}
The last term goes to 0 if $k\beta(n)n^{\gamma-1}m_n^{1-\gamma}\to \infty$ and this implies that 
$$ \mathbb{P}\Big( \exists \mathcal{C}\subset\{1,\dots,n\}\,:\, {|\mathcal{C}|\ge \beta(n)^{-1}\big( \frac{n}{m_n} \big)^{1-\gamma}\text{ and }\atop \mathcal{C}\not\sim \,\{1,\dots,m_n\}}\Big) \to 0 \,,$$
which completes the proof.
\end{proof}

Applying this lemma with $m_n=\lceil M\beta(n)^{\alpha}n\rceil $ yields that any component with vertices only in $\{m_n+1,\dots,n\}$ has size at most $\beta(n)^{-\alpha/2}$, which is smaller than $sn$ by the assumption that $\beta(n)\gg n^{1/2-\gamma}$. Thus any sufficiently large component has to intersect the set $\{1,\dots,m_n\}$ and therefore also has to contain a component~$K$ of~\smash{$\mathcal G^{_{(2)}}_{n,M}$}. 
We now complete the proof of (I) in Theorem~\ref{main} by showing that this component is small relative to $sn$ if
$K$ is not the largest component of~$\mathcal G^{_{(2)}}_{n,M}$.
\smallskip

\begin{lemma}\label{unique}
For all $\varepsilon, \delta>0$ there exists $M_0=M_0(\varepsilon, \delta)$ such that,
for all $M\ge M_0$,
$$
\liminf_{n\to\infty}
\mathbb P\Big( \#\mathcal C(K) < \varepsilon s n
\text{ for all components $K\not=C_1(\mathcal G^{(2)}_{n,M})$} \Big)>1-\delta.
$$
\end{lemma}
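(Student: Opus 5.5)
We reuse the machinery developed for the giant and apply it to the non-giant components of the core graph $\mathcal G^{(2)}_{n,M}$. By Lemmas~\ref{pathlength}, \ref{compofm}, \ref{3.12} and \ref{orderm}(a), with high probability all but $o(sn)$ vertices of any component $\mathcal C(K)$ with $K\subset\{1,\dots,m\}$ fall into one of two classes: direct neighbours of $K$, or vertices joined to $K$ by a one-path. These estimates were derived for $\{1,\dots,m\}$ as a whole, not for $C_1$ specifically, so they remain valid here.

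The sets $\mathcal C(K)$ for distinct components $K$ are either disjoint or merge through paths in $\mathscr G_n$. We therefore bound the total size of everything attached to the non-giant part, namely
$$\sum_{K\neq C_1(\mathcal G^{(2)}_{n,M})}\#\mathcal C(K)\;\le\;\sum_{i\in\{1,\dots,m_M\}\setminus C_1(\mathcal G^{(2)}_{n,M})}\mathrm{outdeg}(i)\;+\;R_n,$$
with $m_M=\lfloor M\beta(n)^\alpha n\rfloor$. Here $R_n$ collects the vertices reached by one-paths whose intermediate vertices $y$ lie in $\{m_M+1,\dots,\lfloor sn\rfloor\}$.

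The error term $R_n$ is handled exactly as in Lemma~\ref{orderm}(b) and in the proof of \eqref{3.17}. By Lemma~\ref{1con} and Markov's inequality,
$$\mathbb E[R_n]\le C M^{1-2\gamma}sn+o(sn).$$
Since $2\gamma>1$, this is smaller than $\varepsilon sn/2$ with probability $1-\delta/2$ once $M$ is large.

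For the main sum we use Lemma~\ref{nnc} with $\mu=M$, applied with the full vertex set $\{1,\dots,m_M\}$ in place of $C_1$; the proof only used a sum over $i\le m$. This gives
$$\frac1{sn}\sum_{i\le m_M}\mathrm{outdeg}(i)\to M^{1-\gamma}\int_0^1 f(x)\,dx .$$
Lemma~\ref{uppersuper} gives the corresponding limit for the sum over $C_1$. Taking the difference, the non-giant contribution converges in probability to
$$M^{1-\gamma}\int_0^1 f(x)\big(1-\rho(\kappa_M,x)\big)\,dx.$$

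The main obstacle is showing that this quantity tends to $0$ as $M\to\infty$. It does not follow from the asymptotics of Lemma~\ref{mu}, because $M^{1-\gamma}\int_0^1 f\to\infty$. A direct bound on $1-\rho$ is needed instead. Substitute $x=u/M$ with $u=i/(\beta(n)^\alpha n)$, the natural unscaled index. In the limit $M\to\infty$ the kernel $\kappa_M(x,y)\,dy$ becomes the fixed rank-one kernel $(2\gamma-1)^{-1}u^{-\gamma}v^{-\gamma}\,dv$ on $(0,\infty)$, and the survival probability converges to $\rho_\infty(u)$. The claim then reduces to showing
$$\int_0^\infty u^{-\gamma}\big(1-\rho_\infty(u)\big)\,du<\infty.$$

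Since the kernel is rank one, the extinction probability has the explicit form
$$1-\rho_\infty(u)=\exp\big(-c\,u^{-\gamma}\Lambda\big),\qquad \Lambda=\int_0^\infty v^{-\gamma}\rho_\infty(v)\,dv .$$
Here $\Lambda$ is finite (this is the finiteness of $\theta$) and positive (by supercriticality, which follows from $\|T\|=\infty$). For large $u$ the integrand behaves like $u^{-\gamma}$. This is only integrable for $\gamma>1$, so the integral in the reduction diverges and the limiting argument fails as stated. The fix is to truncate at $v\le M$, which is automatic because $x\le1$ corresponds to $u\le M$. The correct quantity is then
$$M^{-\gamma}\cdot\int_0^M u^{-\gamma}\,e^{-c\Lambda u^{-\gamma}}\,du\;\asymp\;M^{1-2\gamma}\to0,$$
where the factor $M^{-\gamma}$ comes from rescaling against $sn$. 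It vanishes because $\gamma>\tfrac12$.

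Making this truncation and the uniform convergence $\rho(\kappa_M,\cdot)\to\rho_\infty$ rigorous, using monotone coupling in $M$, is the step I expect to need the most care. Combining the two bounds and choosing $M_0$ large then gives Lemma~\ref{unique}.
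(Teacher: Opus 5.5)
\textbf{There is a genuine gap.} You try to prove that the \emph{total} size $\sum_{K\neq C_1(\mathcal G^{(2)}_{n,M})}\#\mathcal C(K)$ is $o(sn)$ for large $M$, and this is false.

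Your own computations already show it. By Lemma~\ref{nnc} applied to all of $\{1,\dots,m_M\}$, the full outdegree sum is $\frac{M^{1-\gamma}}{\gamma(1-\gamma)}\,sn\,(1+o(1))$. The giant's share tends to $M^{1-\gamma}\int_0^1 f\rho(\kappa_M,\cdot)\,dx$, which converges to the finite $\theta$ by Lemma~\ref{mu}. Hence the non-giant share $M^{1-\gamma}\int_0^1 f(x)\big(1-\rho(\kappa_M,x)\big)\,dx$ diverges like $M^{1-\gamma}$.

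Your truncation ``fix'' rests on an algebra slip. The substitution $x=u/M$ turns this quantity into exactly $\int_0^M \frac1\gamma u^{-\gamma}\big(1-\rho(\kappa_M,u/M)\big)\,du$, with no leftover factor $M^{-\gamma}$. Since $1-\rho\to1$ for large $u$, this is $\asymp M^{1-\gamma}$, not $M^{1-2\gamma}$.

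Concretely, core vertices $i\gg\beta(n)^\alpha n$ are mostly isolated in $\mathcal G^{(2)}_{n,M}$. Each carries its own star of about $\frac1\gamma\beta(n)n^\gamma i^{-\gamma}$ neighbours. Overlaps are $O(\beta(n)^\alpha n)=o(sn)$ by Lemma~\ref{onecon}. So $\sum_{K\neq C_1}\#\mathcal C(K)$ is, with high probability, at least of order $M^{1-\gamma}sn$, and no bound on this sum can yield the lemma.

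\textbf{What is missing is a per-component bound.} The lemma concerns the \emph{maximum} over non-giant components. The paper gets this bound from the structure of the core. $\mathcal G^{(2)}_{n,M}$ is a supercritical rank-one inhomogeneous random graph on $m'$ vertices with $\inf\kappa_M>0$. So by \cite[Theorem 3.12(ii)]{rjb}, every non-giant component has at most $\nu\log m'$ vertices with high probability.

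After the same reduction you make, namely $\#\mathcal C(K)\le\sum_{i\in K}\mathrm{outdeg}(i)+\varepsilon sn$ for all $K$, two ingredients finish the proof:
\begin{itemize}
\item a union bound over all $K\subset\{1,\dots,m'\}$ with $\#K\le\nu\log m'$, of which there are at most $e^{c\log^2 m'}$;
\item the exponential Markov bound $\mathbb P\big(\sum_{i\in K}\mathrm{outdeg}(i)>\varepsilon sn\big)\le\exp\big(-\varepsilon sn+c\,\#K\,\beta(n)n^\gamma\big)$.
\end{itemize}
This suffices because $sn/(\beta(n)n^\gamma)\asymp(n\beta(n)^\alpha)^{1-\gamma}\gg\log m'$ and $sn\gg\log^2 m'$. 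Even the cruder estimate $\#K\cdot\max_i\mathrm{outdeg}(i)=O(\beta(n)n^\gamma\log m')=o(sn)$ with high probability would do.

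\textbf{A secondary point.} Your estimate $\mathbb E[R_n]\le CM^{1-2\gamma}sn$ is not uniform in $M$. The constant in Lemma~\ref{orderm}(b) scales like $\mu^{1-\gamma}$ in the core parameter. Taking both core and cutoff at $m_M$ therefore gives order $M^{2-3\gamma}sn$, which does not vanish for $\gamma\le\frac23$. A second, larger cutoff would be needed.
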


\begin{proof} As before, Lemmas \ref{pathlength}, \ref{compofm} and \ref{orderm} 
imply that, for any $\varepsilon,\delta \in (0,1)$ and sufficiently large $M,n$  such that, with probability at least $1-\delta$, for all connected components $K\subset \mathcal G^{_{(2)}}_{n,M} $, 
\begin{align*}
     \#\mathcal C(K) \le &\,  \sum_{i\in K} \text{outdeg}(i) +\varepsilon sn.
\end{align*}
{Hence, for $\varepsilon_2>\varepsilon_1>0$, $\delta\in(0,1)$ we find $M_0$ such that, for any 
$M\ge M_0$,}
\begin{align}
 \mathbb P \Big(  \max_{{K\in\mathcal{G}^{(2)}_{n,M}:}\atop{K\neq C_1(\mathcal{G}^{(2)}_{n,M})}} & \#\mathcal C(K)>\varepsilon_2 sn\Big) 
 \notag\\
& \leq   \mathbb P \Big(  \max_{{K\in\mathcal{G}^{(2)}_{n,M}:}\atop{K\neq C_1(\mathcal{G}^{(2)}_{n,M})}}\sum_{i\in K} \text{outdeg}(i)> (\varepsilon_2-\varepsilon_1) sn\Big)  + \delta. \label{schritt1}
\end{align}
Recall that $\mathcal G^{(2)}_{n,M}$ is an inhomogeneous random graph with $m'=\lceil M\beta(n)^{\alpha}n\rceil $ vertices and kernel $\kappa_M$ given in (\ref{kappamu}). This kernel satisfies $||T_{\kappa_M}||>1$ and $\inf_{x,y}\kappa_M(x,y)>0$. Hence, by \cite[Theorem 3.12, (ii)]{rjb} there exists a  $\nu=\nu(M)>0$ such that $\#C_2(\mathcal G^{(2)}_{n,M})\le \nu \log m'$ with high probability. Hence
\begin{align}
 \mathbb P \Big(  & \max_{{K\in\mathcal{G}^{(2)}_{n,M}:}\atop{K\neq C_1(\mathcal{G}^{(2)}_{n,M})}}\sum_{i\in K} \text{outdeg}(i)> (\varepsilon_2-\varepsilon_1) sn\Big) \notag \\
& \leq 
  \mathbb P \Big(\max_{{K\subset\{1,\dots,m'\}:}\atop{ \#K\le \nu \log m'}}\sum_{i\in K} \text{outdeg}(i) >(\varepsilon_2-\varepsilon_1) sn\Big) \notag\\ 
& \le   (\nu \log m') \binom{m'}{\nu \log m'} \max_{K\subset\{1,\dots,m'\}\atop \#K\le \nu \log m'} \mathbb P \Big(\sum_{i\in K} \text{outdeg}(i)>(\varepsilon_2-\varepsilon_1) sn\Big).
\label{schritt2}
\end{align}
Furthermore, for the exponential moments of the outdegrees we have
\begin{align*}
     \mathbb{E}\big[e^{\text{outdeg}(i)}\big]=&\,\prod_{j>i}^n  \mathbb{E}\big[e^{ \mathbbm{1}_{\{j\sim i\}}}\big]=\prod_{j>i}^n  \big(1+(e-1)\beta(n)j^{\gamma-1} i^{-\gamma}\big)
      \\ =&\,\exp\Big((e-1)\frac{1}{\gamma}\beta(n)n^{\gamma} i^{-\gamma}\big(1+  o(1)\big) \Big).
\end{align*}
Thus by the exponential Markov inequality and the above 
we have for the sum over the outdegrees
\begin{align*}
   \mathbb P \Big(\sum_{i\in K} \text{outdeg}(i)>\varepsilon sn\Big)
     \le & \,e^{-\varepsilon sn}\prod_{i\in K}\mathbb E\big[e^{\text{outdeg}(i)}\big]
     \\ \le & \,\exp\big(-\varepsilon sn+ c(\#K)n^\gamma \beta(n)\big)\,.
\end{align*}
Combining this with \eqref{schritt1}, \eqref{schritt2} we get
\begin{align*}
    \limsup_{n\to\infty} \mathbb P \Big( & \max_{{K\in\mathcal{G}^{(2)}_{n,M}}\,{K\neq C_1(\mathcal{G}^{(2)}_{n,M})}}\#\mathcal C(K)>\varepsilon_2 sn\Big)
    \\  &  \le \exp\big( \limsup_{n\to\infty} c \log^2(m')-(\varepsilon_2-\varepsilon_1) sn+c\log(m')n^\gamma \beta(n)\big) +\delta\,,
\end{align*}
{where $c>0$ is a positive constant. As we have $sn\gg n^\gamma \beta(n)\log(m')$ and $sn\gg \log^2(m')$ the limsup in the exponent is $-\infty$. This shows that the probability that
$\#\mathcal C(K) > \varepsilon_2 s n$ for some component $K\not=C_1(\mathcal G^{_{(2)}}_{n,M})$ can be made smaller than $\delta$, as required to complete the proof.} 
\end{proof}

This completes the proof of Theorem~\ref{main} in the tiny giant regime~(I).

\subsection{In the critical window} \label{intermed}

In the critical window the core from Lemma \ref{lem1} has size of constant order and splits into random components. We first prove that for sufficiently large $m$, independent of $n$, the largest connected component in $\mathscr G_n$ intersects $\{1,\ldots,m\}$ with probability arbitrarily close to one.\smallskip
% is among them and thus $\#C_1$, appropriately rescaled, has a random limit.

Abbreviate  $\varphi_n:=\beta(n)n^{\gamma-\tfrac12}\to \varphi$.
For the first part of this section we allow $\varphi\ge0$ and later on specify when we only consider $\varphi>0$. The main technical ingredient of the proof is the following upper bound on the expected number of paths ending in a fixed vertex $i$ and running above a threshold $m$, which here is a sufficiently large but otherwise arbitrary integer. 

\begin{lemma}
  Let $X_l^{(i)}$ be the number of paths $x_0x_1\cdots x_l$ of length $l$ such that $x_l=i$ and $x_k>m$ for all $k=0,1,\ldots,l-1$. Then
  \begin{align}\label{esti}
		\mathbb E\big[X^{(i)}_l\big]&\le \mu_l(x_l) \,,
\end{align}
where
$$ \mu_l(x_l) \le \mathbbm{1}_{\{x_l > m\}}\psi_l\, x_{l}^{\gamma-1}+\phi_l\, x_{l}^{-\gamma}$$
and 
\begin{equation} \label{intermedrec}
    \begin{aligned}
    \psi_{l} = &\,O\big(\beta(n)^{l}m^{(1-2\gamma)\lfloor \tfrac{l}2 \rfloor}n^{\gamma + (2\gamma-1)(\lfloor \tfrac{l}2 \rfloor-1)}\log(n)^{\mathbbm{1}_{\{l \text{ odd}\}}}\big) \\
    \phi_{l} = &\,O\big(\beta(n)^{l}m^{(1-2\gamma)\lfloor \tfrac{l-1}2 \rfloor}n^{\gamma + (2\gamma-1)\lfloor \tfrac{l-1}2 \rfloor}\log(n)^{\mathbbm{1}_{\{l \text{ even}\}}}\big)\;.
\end{aligned}
\end{equation} 
\end{lemma}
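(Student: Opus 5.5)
We follow the proof of Lemma~\ref{pathlength}, now with threshold $\tilde s n = m$ for a fixed integer $m$. The plan has three steps: reduce the path count to a recursion, start the recursion, and solve it by induction.

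\emph{Step 1: reduction to a recursion.} Bounding the probability of a path by the product of its edge probabilities and dropping the distinctness constraint gives
$$\mathbb E[X^{(i)}_l]\le \sum_{x_{l-1}>m}\cdots\sum_{x_0>m}\prod_{k=0}^{l-1}\beta(n)(x_k\vee x_{k+1})^{\gamma-1}(x_k\wedge x_{k+1})^{-\gamma}$$
with $x_l=i$. Define $\mu_k$ by summing out $x_0,\dots,x_{k-1}$ over $\{m+1,\dots,n\}$, exactly as before. Then
\[\mu_{k+1}(x)=\sum_{m<y\le n}\mu_k(y)\,\beta(n)(x\vee y)^{\gamma-1}(x\wedge y)^{-\gamma},\]
which is \eqref{recurmu} with $\tilde s n=m$, and \eqref{esti} holds with equality in this definition.

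\emph{Step 2: initial step.} We have $\mu_1(x)\le \mathbbm 1_{\{x>m\}}\psi_1 x^{\gamma-1}+\phi_1x^{-\gamma}$ with $\psi_1=\frac{\beta(n)n^{1-\gamma}}{1-\gamma}$ and $\phi_1=\frac{\beta(n)n^\gamma}{\gamma}$. Here $\psi_1$ can be replaced by the smaller bound $\beta(n) n^{\gamma}$-type terms when $x>m$; in fact for $x>m$ both terms are comparable, and the stated form of \eqref{intermedrec} with $l=1$ gives $\psi_1=O(\beta(n)n^{\gamma-(2\gamma-1)}\log n)=O(\beta(n)n^{1-\gamma}\log n)$, which is consistent.

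\emph{Step 3: induction.} Apply Lemma~\ref{1.3} with $s_1=s_2=m/n$, so that $\log(1/s_2)\le\log n$ and $(s_2n)^{1-2\gamma}=m^{1-2\gamma}$. This gives the recursion \eqref{psiphi} in the form
\[\psi_{k+1}=\beta(n)\Big(\phi_k\tfrac{m^{1-2\gamma}}{2\gamma-1}+\psi_k\log n\Big),\qquad \phi_{k+1}=\beta(n)\Big(\phi_k\log n+\psi_k\tfrac{n^{2\gamma-1}}{2\gamma-1}\Big).\]
We then verify \eqref{intermedrec} by induction on $l$, separating the parity cases.

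For odd $l\to l+1$ even, the dominant contributions are $\psi_{l+1}\approx\beta\phi_l m^{1-2\gamma}$, which gains one factor $m^{1-2\gamma}$ and removes the logarithm, and $\phi_{l+1}\approx\beta\phi_l\log n$. The competing term $\beta\psi_l n^{2\gamma-1}$ in $\phi_{l+1}$ is of the same order without the logarithm, so it is absorbed.

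For even $l\to l+1$ odd, we get $\phi_{l+1}\approx\beta\psi_l n^{2\gamma-1}$, gaining a factor $n^{2\gamma-1}$, and $\psi_{l+1}\approx\beta\psi_l\log n$. In each case we compare the two summands and check that the other one is of lower or equal order. This uses only $m\ge1$, which gives $m^{1-2\gamma}\le 1\le n^{2\gamma-1}$, together with the exponent bookkeeping of $\lfloor l/2\rfloor$ and $\lfloor (l-1)/2\rfloor$.

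The main obstacle is this bookkeeping. The exponents must match exactly at every parity transition, and we must confirm that the logarithmic factors do not accumulate: each factor $\log n$ produced at one step is dominated in the next step by a term carrying no logarithm. If that cross-term dominance fails in some case, the fallback is to allow a factor $\log(n)^{O(l)}$, which is harmless for fixed $l$.
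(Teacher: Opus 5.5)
Your plan is the paper's proof. You define $\mu_k$ by the recursion \eqref{recurmu} with threshold $m$ and start from $\psi_1=\frac{\beta(n)n^{1-\gamma}}{1-\gamma}$, $\phi_1=\frac{\beta(n)n^{\gamma}}{\gamma}$. You then apply Lemma~\ref{1.3} with $s_1=s_2=m/n$ to get the linear recursion for $(\psi_k,\phi_k)$, and verify \eqref{intermedrec} by induction over the parity of $l$. The dominant terms you single out in each parity step are the right ones.

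Your justification for discarding the remaining terms is not correct, though. Two of the cross-terms are not of the same order as the main term:
\[
\beta(n)\psi_l\log n \text{ in } \psi_{l+1} \text{ for odd } l, \qquad \beta(n)\phi_l\log n \text{ in } \phi_{l+1} \text{ for even } l .
\]
You do not address either explicitly. Each is of order $(m/n)^{2\gamma-1}(\log n)^2$ times the claimed bound. What is needed is therefore an \emph{upper} bound on $m$, namely $(m/n)^{2\gamma-1}(\log n)^2=O(1)$; the inequality $m^{1-2\gamma}\le 1$ you invoke points the wrong way.

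This condition holds here because $m$ is fixed while $n\to\infty$. It is exactly the mechanism that stops the logarithms from accumulating. For $m\asymp n$, the recursion with coefficient $\log n$ would give $\psi_2\ge\beta(n)\psi_1\log n\asymp\beta(n)^2n^{1-\gamma}\log n$, which violates \eqref{intermedrec}.

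Your fallback of allowing extra factors $\log(n)^{O(l)}$ is also not harmless. It does not yield the lemma as stated. Moreover, Lemma~\ref{lcc} and \eqref{negl} sum these bounds over all $l$, and growing powers of $\log n$ would make those geometric series diverge.
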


\begin{proof}
Equation~\eqref{esti} holds when $(\mu_k)$ is given by the recursion (\ref{recurmu}).
The starting value is given by 
\begin{align*}
    \mu_1(x_1)= & \sum_{m<x_{0}\le n} \beta(n)(x_0 \vee x_{1})^{\gamma-1}(x_0 \wedge x_{1})^{-\gamma}
		\\ \le & \mathbbm{1}_{\{x_1 > m\}}\underbrace{\frac{\beta(n) n^{1-\gamma}}{1-\gamma}}_{=:\psi_1} x_{1}^{\gamma-1}+\underbrace{\frac{\beta(n) n^{\gamma}}{\gamma}}_{=:\phi_1} x_{1}^{-\gamma} \;.
\end{align*}
Using Lemma \ref{1.3} all further values can be bounded by 
$$ \mu_k(x_k) \le \mathbbm{1}_{\{x_k > m\}}\psi_k\, x_{k}^{\gamma-1}+\phi_k\, x_{k}^{-\gamma} \,.$$
where $(\psi_k), (\phi_k)$ are recursively defined by 
\begin{align*}
    \psi_{k+1} = &\,\beta(n)\big(\phi_k \frac{m^{1-2\gamma}}{2\gamma-1}+\psi_k\log(n)\big), \\
    \phi_{k+1} = &\,\beta(n)\big(\phi_k\log(n) +\psi_k\frac{n^{2\gamma-1}}{2\gamma-1}\big)\;.
\end{align*}
Using this one can prove the claimed asymptotics by induction.
\end{proof}

%If we can prove that the largest connected component also has to intersect the set $\{1,\dots,m\}$ we are done. {\color{red}Done with what and why?} 
%\noindent 
We use the notation superscript \glqq $>m$\grqq \;to restrict a previously introduced variable/set to the graph on $\{m+1,\dots,n\}$. Writing $C_{i}$ for the $i$-th largest component, we can now bound the size of $C_{1}^{>m}$ from above.

\begin{lemma}\label{lcc}
For any $\varepsilon>0$ and $m$ sufficiently large,
$$ \max_{n\in\mathbb N}
    \mathbb{P}\big(\#C_{1}^{>m}\ge \beta(n)n^\gamma \big) \le \varepsilon \,.$$
\end{lemma}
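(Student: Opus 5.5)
We need to bound the probability that the largest component of the graph restricted to $\{m+1,\dots,n\}$ has at least $\beta(n)n^\gamma$ vertices, uniformly in $n$, by $\varepsilon$ for large $m$. We are in the critical window, where $\varphi_n=\beta(n)n^{\gamma-1/2}$ is bounded.

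The plan is a second-moment style path count. We will use $\#C_1^{>m}\le\big(\sum_{v>m}\#\mathcal C^{>m}(v)^2\big)^{1/2}$, or more simply
$$\mathbb P\big(\#C_1^{>m}\ge k\big)\le \frac1k\,\mathbb E\big[\#\{v>m: \#\mathcal C^{>m}(v)\ge k\}\big]\le \frac{1}{k^2}\sum_{v>m}\mathbb E\big[\#\mathcal C^{>m}(v)\big].$$
This follows from Markov's inequality, since every vertex of a component of size at least $k$ is counted and $\mathbf 1_{\{\#\mathcal C(v)\ge k\}}\le \#\mathcal C(v)/k$. We bound $\mathbb E[\#\mathcal C^{>m}(v)]$ by the expected number of self-avoiding paths starting at $v$ that stay in $\{m+1,\dots,n\}$. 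Hence $\sum_{v>m}\mathbb E[\#\mathcal C^{>m}(v)]\le \sum_{l\ge0}\sum_{x_l>m}\mu_l(x_l)$, where the $\mu_l$ are exactly those of the preceding lemma, with all vertices above $m$.

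Next we use \eqref{intermedrec} and sum $\mu_l(x)\le\psi_l x^{\gamma-1}+\phi_l x^{-\gamma}$ over $m<x\le n$. This gives a contribution of order $\psi_l n^\gamma+\phi_l n^{1-\gamma}$. Inserting the bounds, every two steps of the recursion multiply by a factor of order $\beta(n)^2 m^{1-2\gamma}n^{2\gamma-1}=\varphi_n^2 m^{1-2\gamma}$, up to $\log n$ factors from the mixed terms. For $m$ large this factor is small, so the series over $l$ is geometric. One must check that $\log n$ factors do not accumulate. Going to the next-order recursion, a $\beta(n)\log n$ factor appears only in odd or mixed steps, and $\beta(n)\log n\to0$ because $\beta(n)\approx\varphi n^{1/2-\gamma}$. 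Thus these terms are negligible, and the leading terms carry no logarithm.

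The $l=0,1$ terms give $n$ and $\beta(n)n$. The $l=2$ term is of order $\beta(n)^2 n\cdot n^{2\gamma-1}\log n$-free pieces, roughly $n\varphi_n^2 m^{1-2\gamma}+\dots$. We therefore expect a total of order $n(1+O(\varphi_n^2m^{1-2\gamma}))$, plus lower order. With $k=\beta(n)n^\gamma$ we have $k^2=\varphi_n^2 n$. So the bound reads $\mathbb P\le C n/(\varphi_n^2 n)$, which does not become small; the crude first-moment counting is too weak at $l=0$.

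The main obstacle is therefore to remove the trivial paths, since isolated vertices and short paths dominate the first moment. The fix is to count only large components through edges. We want $k\#\{\text{components of size}\ge k\}\le\sum_{v}\#\mathcal C(v)\mathbf 1_{\dots}$, but better is
$$\mathbb P(\#C_1^{>m}\ge k)\le k^{-2}\,\mathbb E\big[\#\{(u,v): u\leftrightarrow v \text{ in }\{m+1,\dots,n\}, u\neq v\}\big].$$
This bound is pairs-based and avoids the $l=0$ term. The remaining danger is $l=1$, i.e.\ the number of edges, of order $\beta(n)n\log n$-free, roughly $\beta(n) n^{\gamma}m^{1-\gamma}$. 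Divided by $k^2=\beta(n)^2n^{2\gamma}$, this gives $m^{1-\gamma}/(\beta(n)n^\gamma)$, which tends to $0$ since $\beta(n)n^\gamma\gg m$. For higher $l$ we sum $\psi_l,\phi_l$ against $x^{\gamma-1},x^{-\gamma}$ on $(m,n]$. Each term should be $O(\beta(n)^2n^{2\gamma}m^{1-2\gamma}(\varphi_n^2m^{1-2\gamma})^{\lfloor (l-2)/2\rfloor})$, giving the total bound $C m^{1-2\gamma}$ (plus $o(1)$ terms uniform in $n$), which is below $\varepsilon$ for $m$ large. For small $n$, where $o(1)$ terms are not yet small, we can enlarge $m$ so that $\{m+1,\dots,n\}$ is empty, which gives uniformity over all $n$. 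The delicate bookkeeping is to verify the precise exponents of $m$ in the $\phi_l$ term summed against $x^{-\gamma}$, namely that the sum $\sum_{x>m}x^{-\gamma}\sim n^{1-\gamma}$ still yields a decaying power of $m$. If it fails, we would restrict the endpoint sum to $x\le n$ and use symmetry of pair counting, choosing the older endpoint as $x_l$.
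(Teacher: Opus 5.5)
Your argument follows the paper's route: bound the largest component of the graph on $\{m+1,\dots,n\}$ through a sum over pairs of connected vertices, control that sum by counting paths above $m$ with the recursion for $(\psi_l,\phi_l)$ in \eqref{intermedrec}, and apply Markov. Your switch to ordered pairs $u\neq v$ is not cosmetic. It repairs an actual omission in the paper's proof, which bounds $\mathbb E\big[\sum_{i>m}\#C^{>m}(i)\big]$ by $\sum_{l\ge1}\sum_{i>m}\mathbb E[X^{(i)}_l]$ and so drops the $l=0$ term. Since $\#C^{>m}(i)$ counts $i$ itself, that sum is deterministically at least $n-m$, and the Markov--Jensen step then only gives a bound of order $1/\varphi_n$, which is not small. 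Using $\#C_1^{>m}(\#C_1^{>m}-1)\le\#\{(u,v):u\neq v,\ u\leftrightarrow v \text{ in } \{m+1,\dots,n\}\}$ turns the paper's $l\ge1$ path count into a valid proof. You should divide by $k(k-1)$ rather than $k^2$, which is harmless.

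One step needs repair. The $l=1$ term is the expected number of edges \emph{inside} $\{m+1,\dots,n\}$, namely $\sum_{m<i<j\le n}\beta(n)i^{-\gamma}j^{\gamma-1}\approx\beta(n)n/(1-\gamma)$, since every vertex has expected indegree about $\beta(n)/(1-\gamma)$. It is not $\beta(n)n^\gamma m^{1-\gamma}$, which is roughly the number of edges leaving $\{1,\dots,m\}$. After dividing by $k^2=\varphi_n^2n$ the term is $O(n^{1/2-\gamma}/\varphi_n)$. It vanishes because $\varphi>0$, i.e.\ because $\beta(n)n^{2\gamma-1}\to\infty$, not because $\beta(n)n^\gamma\gg m$. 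So the second-moment method does not extend to sequences with $\varphi_n\to0$ and $\beta(n)n^{2\gamma-1}$ bounded, where the paper also invokes this lemma below the window.

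Your heuristic about logarithms at higher $l$ can be made rigorous in one line. Put $u=m^{1-2\gamma}/(2\gamma-1)$, $v=n^{2\gamma-1}/(2\gamma-1)$, $\tilde\psi_l=\sqrt v\,\psi_l$ and $\tilde\phi_l=\sqrt u\,\phi_l$. In these variables the recursion becomes a symmetric nonnegative matrix, so $\tilde\psi_{l+1}+\tilde\phi_{l+1}=\rho\,(\tilde\psi_l+\tilde\phi_l)$ with $\rho=\beta(n)\log n+\varphi_n m^{1/2-\gamma}/(2\gamma-1)$. Moreover $\psi_l n^\gamma+\phi_l n^{1-\gamma}\le C\sqrt n\,(\tilde\psi_l+\tilde\phi_l)$ whenever $m\le n$. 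Evaluating at $l=2$, the normalized contribution of all $l\ge2$ is at most $C(1-\rho)^{-1}\big(m^{1-2\gamma}+n^{1/2-\gamma}\log n\big)$. All error terms free of $m$ are small for $n\ge N_0$, with $N_0$ independent of $m$. Your device of taking $m\ge N_0$, so that the graph is empty for $n\le m$, then gives the uniformity over $n\in\mathbb N$ that the statement requires.
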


\begin{proof}
 Observe that
\begin{equation}
    \begin{aligned}\label{cmn}
    \#C_{1}^{>m}  & \le \Big[\sum_{j=1}^n \big( \#C^{>m}_{j}\big)^2 \Big]^{\frac12} =
   \Big[\sum_{j=1}^n\sum_{i\in C_{j}^{>m}} \#C^{>m}(i) \Big]^{\frac12}\\ &   = \Big[\sum_{i={m}+1}^n \#C^{>m}(i) \Big]^{\frac12}.
\end{aligned}
\end{equation}
We can use our bound on the number of paths running above $m$ to get
\begin{align*}
   \mathbb E   \bigg[\sum_{i={m+1}}^n \#C^{>m}(i) \bigg] & \le  \sum_{l\ge1}  \sum_{i=m+1}^{n} \mathbb E\big[X^{(i)}_l\big] 
   \le   \sum_{l\ge1} \frac{\phi_l}{1-\gamma} n^{1-\gamma} + \frac{\psi_l}{\gamma} n^{\gamma} 
    \\ & \le c\beta(n)n+c(\beta(n)n^\gamma)^2m^{1-2\gamma}+cn^{2\gamma}\sum_{l\ge3} \beta(n)^l m^{(\tfrac12-\gamma)l} n^{(2\gamma-1)(\tfrac{l}{2}-1)}
    \\ & = c\beta(n)n+c\,n \varphi_n^2m^{1-2\gamma}+cn \sum_{l\ge3} \big(\varphi_n m^{\tfrac12-\gamma}\big)^l 
    \\ & \le 3 cn\varphi_n^2m^{1-2\gamma}\,,
\end{align*}
where we used that $\phi_ln^{1-\gamma} \le \psi_l n^{\gamma} $ for all $l\ge1$. Therefore by Markov's inequality, (\ref{cmn}) and Jensen's inequality we have, for any $\varepsilon>0$ and $m$ sufficiently large,
\begin{equation}
\begin{aligned}
    \mathbb{P}\big(\#C_{(1)}^{>m}\ge \beta(n)n^\gamma \big) \le &\,\frac{1}{\beta(n)n^\gamma }\Big[c\,n\varphi_n^2m^{1-2\gamma}  \Big]^{1/2}  
     \le \,c\,m^{\tfrac12-\gamma}  \le \varepsilon,
\end{aligned}
\end{equation}
as claimed.
\end{proof}

%\noindent 
Thus the largest connected component restricted to the graph on $\{m+1,\dots,n\}$ is smaller than the largest degree in $\mathscr G_n$ and therefore the largest connected component in $\mathscr G_n$ has to intersect the set $\{1,\dots,m\}$. 
\smallskip

%\noindent 
We now investigate the connection structure of the vertices in $\{1,\dots,m\}$. Note that with high probability they are not connected by single edges. Indeed, as the connection probability is bounded by $\beta(n)$ and there are less than $m^2$ pairs of vertices, the probability of at least one edge existing goes to zero. We therefore look at the core graph as defined Definition~\ref{def1} with $sn=m+1$. Let 
$$X_{ij}:=\#\{x\in\{m+1,\dots,n\}:\; i\sim x \sim j \}.$$ 
We use a Poisson approximation argument to show convergence in total variation of these random variables to independent Poisson variables.

\begin{lemma}\label{indep}
    Let $(P_{ij})_{1\le i,j\le m}$ be independent Poisson random variables with parameter $\lambda_{ij}=\frac{\varphi^2}{2\gamma-1} i^{-\gamma}j^{-\gamma}$. Then, for every fixed $m\in\mathbb N$,
    $$ (X_{ij})_{1\le i,j\le m} \overset{tv}{\longrightarrow}(P_{ij})_{1\le i,j\le m}\,.$$
    In particular the $(X_{ij})_{1\le i,j\le m}$ are asymptotically independent.
\end{lemma}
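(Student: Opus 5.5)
We write $X_{ij}=\sum_{x=m+1}^n I^{(x)}_{ij}$ with $I^{(x)}_{ij}=\mathbbm 1_{\{i\sim x\sim j\}}$, and view the whole array as a sum over $x$ of independent random vectors $V_x:=(I^{(x)}_{ij})_{1\le i<j\le m}$. Independence of the $V_x$ across different $x$ holds because $V_x$ is a function only of the edges $\{k,x\}$, $k\le m$, and these edge sets are disjoint for distinct $x$. We read $X_{ij}$ for $i<j$; diagonal entries are trivial or defined analogously.

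The plan is to compare $V_x$ with a vector of independent Bernoulli variables $B^{(x)}_{ij}$ with parameters $q^{(x)}_{ij}:=p_{ix}p_{jx}=\beta(n)^2 i^{-\gamma}j^{-\gamma}x^{2(\gamma-1)}$, and then apply the classical Poisson approximation for sums of independent Bernoullis. This approximation gives a total variation distance at most $\sum_x (q^{(x)}_{ij})^2$ for each coordinate, and the coordinates are independent.

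\textbf{Step 1 (coupling $V_x$ with independent Bernoullis).} The only discrepancy between $V_x$ and $(B^{(x)}_{ij})$ comes from the event that $x$ is adjacent to at least three vertices of $\{1,\dots,m\}$. Off this event, $V_x$ has at most one nonzero coordinate. Likewise, $(B^{(x)}_{ij})$ has at least two nonzero coordinates only with probability at most $\big(\sum_{i<j}q^{(x)}_{ij}\big)^2$. Comparing the two laws on the set of vectors with at most one nonzero entry, the probabilities of each single coordinate being one agree up to $O\big((\sum_{k\le m}p_{kx})^3\big)$. Hence
$$d_{TV}\big(V_x,(B^{(x)}_{ij})\big)\le C_m\big(\beta(n)x^{\gamma-1}\big)^3 .$$

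\textbf{Step 2 (summing the errors).} Summing over $x$ and using subadditivity of total variation for independent sums gives an error of order
$$\beta(n)^3\sum_{x>m}x^{3\gamma-3}.$$
If $\gamma>2/3$ this is $O(\beta(n)^3 n^{3\gamma-2})=O(\varphi_n^3 n^{-1/2})\to0$. If $\gamma\le 2/3$ the sum is $O(\log n)$ or bounded, and since $\beta(n)\to0$ the error is again $o(1)$. The Poisson approximation error is $\sum_x (q^{(x)}_{ij})^2\le C\beta(n)^4\sum_x x^{4\gamma-4}$, which tends to zero by the same reasoning.

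\textbf{Step 3 (identifying the parameters).} It remains to check convergence of the means. As in the computation preceding \eqref{rankone},
$$\sum_{x=m+1}^n q^{(x)}_{ij}=\beta(n)^2 i^{-\gamma}j^{-\gamma}\,\frac{n^{2\gamma-1}}{2\gamma-1}(1+o(1))\to\lambda_{ij},$$
because $\beta(n)^2n^{2\gamma-1}=\varphi_n^2\to\varphi^2$. The lower cutoff at $m$ contributes only $O(\beta(n)^2)$. Total variation continuity of the Poisson law in its parameter then yields convergence to independent $P_{ij}$, with the total variation distances between products bounded by sums over the finitely many pairs.

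The main obstacle is Step 1. There one has to control carefully the difference between the dependent vector $V_x$, whose coordinates share edges, and the independent Bernoulli vector, uniformly in $x$ with explicit powers of $\beta(n)x^{\gamma-1}$. An alternative route is the Chen–Stein method for the multivariate array: the dependency neighbourhood of $I^{(x)}_{ij}$ consists of the $I^{(x)}_{kl}$ sharing $x$ and an index, and the $b_2$ term $\sum_x\sum_k p_{ix}p_{jx}p_{kx}$ is exactly the cubic sum bounded in Step 2.
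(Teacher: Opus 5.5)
Your proof is correct, but it takes a different route from the paper's. The paper never couples with independent Bernoullis. Instead it uses the thinning criterion of \cite[Corollary 2.2]{AHH}: $(X_{12},X_{13})$ converges to independent Poissons as soon as every thinned sum $X_{12}^{(p)}+X_{13}^{(q)}$ converges in total variation to $\mathrm{Pois}(p\lambda_{12}+q\lambda_{13})$. It proves this univariate statement with the local Chen--Stein bound \cite[Corollary 2.C.5]{BHJ}, where the dependency neighbourhood of the index $(1,x,k)$ is the single index $(1,x,k^c)$ with the same middle vertex. The general array is then declared ``analogous''.

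You instead exploit the product structure over the middle vertex. The vectors $V_x$ are independent across $x$. Each is within $C_m(\beta(n)x^{\gamma-1})^3$ in total variation of an independent Bernoulli vector, because $V_x$ has at most one nonzero entry unless $x$ has three neighbours in $\{1,\dots,m\}$. Subadditivity of total variation plus Le Cam then finishes the argument. Your two error terms, $\beta(n)^3\sum_x x^{3\gamma-3}$ and $\beta(n)^4\sum_x x^{4\gamma-4}$, are exactly the two terms in the paper's Chen--Stein bound, so nothing is lost quantitatively.

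What your version buys:
\begin{itemize}
\item It is self-contained, using only Le Cam and maximal coupling.
\item It handles all pairs $i<j$ simultaneously rather than by analogy.
\item Your case distinction in Step~2 is more careful than the paper's bound $c_1\varphi_n^4n^{-1}+c_2\varphi_n^3n^{-1/2}$. That bound is literally valid only for $\gamma>\frac34$; for smaller $\gamma$ the sums are logarithmic or bounded, and one needs $\beta(n)\to0$, as you note.
\end{itemize}
The paper's route, in turn, reduces the multivariate claim to a univariate one via a general tool, and it would adapt more readily to messier dependence.

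One small correction: the diagonal is not trivial. $X_{ii}$ is the number of neighbours of $i$ in $\{m+1,\dots,n\}$, which is of order $\beta(n)n^\gamma\to\infty$. So the lemma only makes sense for unordered pairs $i<j$, which is what both you and the paper actually treat.
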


\begin{proof}
    We focus on $X_{12}$ and $X_{13}$, the proof for the full family $(X_{ij})_{1\le i,j\le m}$ is analogous, but with more cumbersome notation. We use \cite[Corollary 2.2]{AHH} which can be stated as follows\footnote{The cited reference uses convergence in distribution but as all involved random variables take only countably many values, convergence in distribution and in total variation are equivalent.}: 
    Let \smash{$X_{12}^{(p)} \sim \text{Bin}(X_{12},p)$}, \smash{$X_{13}^{(q)} \sim \text{Bin}(X_{13},q)$} and suppose that 
    \begin{align} \label{whatistoshow}
         X_{12}^{(p)}+X_{13}^{(q)} \overset{tv}{\longrightarrow} \text{Pois}(p\lambda_{12} + q \lambda_{13})\,.
    \end{align}
   Then $(X_{12}, X_{13})$ converges in total variation to two independent Poisson random variables with means $\lambda_{12}$ and $\lambda_{13}$ respectively. 
   \smallskip
   
   To prove (\ref{whatistoshow}) we need results from \cite{BHJ} and first embed our setup into their framework. We equip every vertex $x$ with two independent random variables $J_x^2 \sim \text{Ber}(p)$ and $J_x^3 \sim \text{Ber}(q)$. Then
   \begin{align*}
       X_{12}^{(p)}+X_{13}^{(q)} = \sum_{x\in\{m+1,\dots,n \}\atop k\in\{2,3\}} J_x^k \mathbbm{1}_{1\sim x} \mathbbm{1}_{x\sim k} = \sum_{\alpha\in\Gamma} I_\alpha \,,
   \end{align*}
   where $\Gamma := \bigcup_{x=m+1}^n \{(1,x,2), (1,x,3)\}$ and $I_{(1,x,k)}:=J_x^k \mathbbm{1}_{1\sim x} \mathbbm{1}_{x\sim k}$. 
   %are dependent indicators. 
   In particular, for any $\alpha=(1,x,k)\in\Gamma$ the set of all $\beta$ such that $I_\alpha$ and $I_\beta$ are independent is given by $(\Gamma\setminus \{\alpha\})\setminus \{(1,x,k^c)\}$ as any other index uses a different middle vertex (where $k^c:=3$ if $k=2$ and $k^c:=2$ otherwise). Let $\pi_\alpha = \mathbb{P}(I_\alpha =1)$ and $\lambda=\mathbb E [\sum_{\alpha\in\Gamma} I_\alpha]$. By  \cite[Corollary 2.C.5]{BHJ},
   \begin{align*}
       d_{tv}\Big(\mathcal{L}\big(\sum_{\alpha\in\Gamma} I_\alpha\big), \text{Pois}(\lambda) \Big) \le \frac{1-e^{-\lambda}}{\lambda}\Big(\sum_{\alpha\in\Gamma}\pi_\alpha^2 +\sum_{\alpha=(1,x,k)\in\Gamma}\sum_{\beta=(1,x,k^c)}\mathbb E[ I_\alpha I_\beta ]+\pi_\alpha\pi_\beta\Big).
   \end{align*}
   We have 
   \begin{align*}
       \pi_{(1,x,k)} = \left\{\begin{matrix} p \beta(n)^2 1^{-\gamma}2^{-\gamma} x^{2(\gamma-1)} & \text{if } k=2,\\ q \beta(n)^2 1^{-\gamma}3^{-\gamma} x^{2(\gamma-1)}& \text{if } k=3,   \end{matrix} \right.
   \end{align*}
and 
\begin{align*}
    \mathbb E[ I_{(1,x,k)} I_{(1,x,k^c)} ]=pq \mathbb E\big[ \mathbbm{1}_{1\sim x} \mathbbm{1}_{x\sim 2}\mathbbm{1}_{x\sim 3}\big]=pq  \beta(n)^3 6^{-\gamma} x^{3(\gamma-1)}\,.
\end{align*}
Hence, by the above for two constants $c_1,c_2>0$,
\begin{align*}
     d_{tv}\Big(\mathcal{L}\big(\sum_{\alpha\in\Gamma} I_\alpha\big), \text{Pois}(\lambda) \Big) \le & \frac{1}{\lambda}\Big(c_1\beta(n)^4\sum_{x=m+1}^nx^{4(\gamma-1)} +c_2\beta(n)^3\sum_{x=m+1}^nx^{3(\gamma-1)}\Big)
      \\ \le & \frac{1}{\lambda}\big( c_1 \varphi_n^4 n^{-1}+c_2 \varphi_n^3 n^{-1/2}\big) \to 0 \;.
\end{align*}
Lastly we compute 
\begin{align*}
    \lambda=\mathbb E \Big[\sum_{\alpha\in\Gamma} I_\alpha\Big]= p  \sum_{x=m+1}^n \pi_{(1,x,2)} + q  \sum_{x=m+1}^n \pi_{(1,x,3)} \to p\tfrac{\varphi^2}{2\gamma-1}  1^{-\gamma}2^{-\gamma} + q \tfrac{\varphi^2}{2\gamma-1} 1^{-\gamma}3^{-\gamma} \,.
\end{align*}
\end{proof}

%\noindent 
We now define $\mathscr{G}_{\text{NR}}=\mathscr{G}_{\text{NR}}\big((\lambda_{ij})_{i,j\in\mathbb{N}}\big)$ as the random graph with vertex set $\mathbb N$, where two vertices have an $\text{Pois}(\lambda_{ij})$ amount of edges between them, independent of all other vertices. Moreover let $\mathcal{C}(i)$ be the component of $i$ in $\mathscr{G}_{\text{NR}}$. By the above lemma we can couple  our core graph with $\mathscr{G}_{\text{NR}}$, such that they 
agree up to the first~$m$ vertices with probability going to one as $n\to\infty$.
Moreover we define
$$\text{outdeg}(i)=\sum_{j>i}\mathbbm{1}_{\{i\sim j\}}\;\text{ and }\;\text{indeg}(i)=\sum_{j<i}\mathbbm{1}_{\{i\sim j\}}\,.$$
The following lemma shows that the main contributors to $\# C(i)$ are the outdegrees in $\mathscr{G}_n$ of the vertices in the component of $i$ in $\mathscr{G}_{\text{NR}}$.

\begin{lemma}\label{3.9}
If $\varphi_n=\beta(n)n^{\gamma -1/2} \to \varphi >0$, then 
for every fixed $i\in\mathbb N$,  
\begin{align}\label{pwc}
    \frac{\# C(i)}{n^\gamma \beta(n)}\overset{d}{\longrightarrow} \sum_{j\in\mathcal C(i)} \frac{1}{\gamma}\, j^{-\gamma} \;.
\end{align}
\end{lemma}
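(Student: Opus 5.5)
The plan is to fix $i$, choose a large cutoff $m\ge i$ independent of $n$, and split $C(i)$ into three parts: the core vertices $C(i)\cap\{1,\dots,m\}$, their direct neighbours in $\{m+1,\dots,n\}$, and a remainder. The core vertices form, by Lemma~\ref{indep} and the coupling stated after it, the component of $i$ in $\mathscr G_{\text{NR}}$ restricted to $\{1,\dots,m\}$. Call this $\mathcal C_m(i)$; it agrees with the set of core vertices up to an event of vanishing probability. The claim is then reduced to three estimates.

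First, the outdegrees concentrate. By the mean and variance computation \eqref{varj} in the proof of Lemma~\ref{nnc}, for each fixed $j\le m$ we have $\mathbb E[\mathrm{outdeg}(j)]=\frac1\gamma\beta(n)n^\gamma j^{-\gamma}+o(1)$ and $\Var(\mathrm{outdeg}(j))\le\frac1\gamma\beta(n)n^\gamma j^{-\gamma}$. Since $\beta(n)n^\gamma\to\infty$, Chebyshev gives
\[
\mathrm{outdeg}(j)/(\beta(n)n^\gamma)\to\tfrac1\gamma j^{-\gamma}
\]
in probability, jointly for all $j\le m$. The edges to $\{m+1,\dots,n\}$ do include the one-connector edges that determine the core. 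However, the number of such connectors is tight, since $X_{ij}$ converges to a Poisson variable, so conditioning on the core structure alters each outdegree by $O_P(1)$. This is negligible after scaling, and the convergence survives jointly with the core configuration. Overlaps between neighbourhoods of distinct core vertices are exactly the one-connectors, which again number $O_P(1)$. Hence
\[
\#\{x>m:x\sim \mathcal C_m(i)\}/(\beta(n)n^\gamma)-\sum_{j\in\mathcal C_m(i)}\tfrac1\gamma j^{-\gamma}\to0
\]
in probability.

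Second, the remainder must be shown negligible: vertices $x>m$ in $C(i)$ at distance at least $2$ from $\{1,\dots,m\}$ (within the graph above $m$). I would bound their number by path counting with the lemma giving \eqref{esti} and \eqref{intermedrec}. Such a vertex lies at the start of a path $x_0\cdots x_l$ with $l\ge2$, all $x_k>m$ for $k<l$, and $x_l=j\le m$. Summing $\phi_l j^{-\gamma}$ over $j\le m$ and $l\ge2$, the $l=2$ term is of order $\beta(n)^2n^\gamma\log n\cdot m^{1-\gamma}$, which is $o(\beta(n)n^\gamma)$ because $\beta(n)\log n\to0$. For $l\ge3$ the terms are of order $\beta(n)n^\gamma m^{1-\gamma}(\varphi_n m^{1/2-\gamma})^{2\lfloor (l-1)/2\rfloor}$ times logarithmic factors; after dividing by $\beta(n)n^\gamma$ this is $O(m^{1-\gamma}\cdot m^{1-2\gamma}\varphi^2)$ uniformly in $n$. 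I expect this to be the main obstacle. This crude bound counts paths into all of $\{1,\dots,m\}$ and is not small uniformly in $m$, since $m^{2-3\gamma}$ need not vanish. I would therefore refine it in two ways. The sum over $j$ should be weighted by $j^{-\gamma}$ against the truncation, so that paths landing in large $j$ are controlled by $j^{-2\gamma}$-type factors. Odd paths whose intermediate even vertices lie in $\{m+1,\dots,M\}$ are genuinely core connections for a larger core and should be absorbed by enlarging $m$, mirroring Lemmas~\ref{compofm} and \ref{orderm}. The result should be that the remainder beyond direct neighbours of the $M$-core is $\le\varepsilon\beta(n)n^\gamma$ with probability $\ge1-\delta$ once $M$ is large.

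Third, let $m\to\infty$. Since $\mathscr G_{\text{NR}}$ is subcritical, $\sum_j\lambda_{ij}<\infty$ and the total weight $\sum_j j^{-2\gamma}$ is finite, so $\mathcal C(i)$ is almost surely finite. Hence $\sum_{j\in\mathcal C_m(i)}\frac1\gamma j^{-\gamma}\to\sum_{j\in\mathcal C(i)}\frac1\gamma j^{-\gamma}$ almost surely as $m\to\infty$. Combining the three estimates with a standard approximation argument (convergence in distribution for each $m$, with the errors uniformly small in probability as $m\to\infty$) yields \eqref{pwc}.
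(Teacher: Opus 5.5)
Your first and third steps follow the paper's lower-bound argument: Chebyshev for the outdegrees, overlaps controlled by the one-connector count \eqref{cge2}, then $m\to\infty$. The upper bound, however, is not established. Your second step stops at a plan. The estimate you actually carry out, summing $\phi_l\,j^{-\gamma}$ over all $j\le m$, gives $O(\varphi^2m^{2-3\gamma})$, which does not vanish for $\gamma\in(\frac12,\frac23]$.

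The paper gets its decay by fixing the endpoint. It counts only paths $x_0\cdots x_l$ with $x_l=i$ and $x_k>m$ for $k<l$, so the terminal factor is $i^{-\gamma}$ rather than $\sum_{j\le m}j^{-\gamma}\asymp m^{1-\gamma}$. This gives $\mathbb E\sum_{l\ge2}X^{(i)}_l\le\sum_{l\ge2}\phi_l\,i^{-\gamma}\le c\,m^{1-2\gamma}n^\gamma\beta(n)$, hence \eqref{negl}.

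Your worry that a far vertex of $C(i)$ may first enter $\{1,\dots,m\}$ at some $j\ne i$ is legitimate. Such vertices are not counted by $X^{(i)}_l$, and the paper's inequality $\#C^{>m}_{\ge2}(i)\le\sum_{l\ge2}X^{(i)}_l$ does not account for them. But flagging the issue is not resolving it; what is missing is a decoupling. Split a self-avoiding path from $x_0$ to $i$ at its first vertex $j\in\{1,\dots,m\}$. The two pieces are edge-disjoint, so the BK inequality bounds the contribution by $\sum_{j\le m}\mathbb P(j\leftrightarrow i)\sum_{l\ge2}\phi_l\,j^{-\gamma}$. Combined with an estimate $\mathbb P(j\leftrightarrow i)\le C_i\,j^{-\gamma}$ uniform in $n$, your $\sum_{j\le m}j^{-\gamma}$ becomes $\sum_j j^{-2\gamma}<\infty$, and the factor $m^{1-2\gamma}$ is restored. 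That estimate is itself a bound on the weighted size of $C(i)$ and needs proof. Neither of your two sketched refinements supplies it.

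The same issue affects your first step. Lemma~\ref{indep} controls only one-connectors, so it does not give $C(i)\cap\{1,\dots,m\}=\mathcal C_m(i)$ with high probability. Core vertices are also joined by paths $j\sim x\sim y\sim x'\sim k$ with $y>m$. The expected number of such connections from $i$ into $\{1,\dots,m\}$ is of order $\varphi^4 i^{-\gamma}\sum_{j\le m}j^{-\gamma}\sum_{y>m}y^{-2\gamma}\asymp i^{-\gamma}m^{2-3\gamma}$, which does not vanish for $\gamma\le\frac23$. The extra core vertices, each with about $\frac1\gamma\beta(n)n^\gamma j^{-\gamma}$ direct neighbours, fall into none of your three parts. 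Showing their total weight is small again needs the bound on $\mathbb P(j\leftrightarrow i)$.

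Finally, the justification in your third step is wrong. Since $\gamma<1$, we have $\sum_j\lambda_{ij}=\frac{\varphi^2}{2\gamma-1}i^{-\gamma}\sum_j j^{-\gamma}=\infty$. By Borel--Cantelli every vertex of $\mathscr G_{\text{NR}}$ then has infinitely many neighbours, so $\mathcal C(i)$ is almost surely infinite. The convergence $\sum_{j\in\mathcal C_m(i)}j^{-\gamma}\uparrow\sum_{j\in\mathcal C(i)}j^{-\gamma}$ holds by monotonicity alone. What needs proof is that this limit is finite. The paper obtains this in Lemma~\ref{conclusion} via domination by $\mathscr G_{\text{DK}}$ and a second-moment bound. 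It would also follow from a correct uniform upper bound in your second step.
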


\begin{proof}
For every fixed $j$ and $m$ we have
\begin{align*}
    \mathbb{E}\big[\text{outdeg}^{>m}(j)\big]=\sum_{l=m+1}^n \beta(n) l^{\gamma-1} j^{-\gamma} = \frac1\gamma \beta(n) n^\gamma j^{-\gamma} +o(1)\;.
\end{align*}
Furthermore, for the variance we have, similar to (\ref{varj}), that $\Var(\text{outdeg}^{>m}(j))\le\frac{1}{\gamma} \beta(n) n^\gamma j^{-\gamma} $.
Thus by Chebyshev's inequality
\begin{equation} \label{cheby}
     \frac1{n^\gamma \beta(n)}\text{outdeg}^{>m}(j) \overset{\mathbb P}{\longrightarrow} \frac{1}{\gamma}\, j^{-\gamma} \;.
\end{equation}
We fix $i$ and denote by $C'(i)$ the component of vertex $i$ in the core graph. Note that
\begin{align}\label{cge1}
  \# C(i) \geq \sum_{j\in C'(i)} \text{outdeg}^{>m}(j) - \# \bigg\{ j\in\{m+1,\ldots,n\}{ \exists k\not=l\in C'(i)\atop \text{ with } k,l\sim j}\bigg\}.  
\end{align}
The subtracted set is not empty, as every edge $\{k,l\}$ in the core graph causes the middle vertex to be an element of this set. We now argue that this set is negligible. Note that its expectation can be bounded from above by 
\begin{align}\label{cge2}
    \mathbb E\bigg[\sum_{j=m+1}^n\sum_{1\le k < l \le m} \mathbbm{1}_{\{l\sim j \sim k\}}\bigg ] \le \sum_{j=m+1}^n \beta(n)^2 j^{2\gamma-2 }m^2 \le \tfrac{1}{2\gamma-1}\beta(n)^2 n^{2\gamma-1 } m^2. % \le c m^2\,.
\end{align}
Hence, scaled by $n^\gamma \beta(n)$, the second term in the inequality above goes to 0 in probability. This implies that, for fixed $i\le m$ and $\varepsilon>0$,
$$\lim_{n\to\infty} \mathbb P\Big( \frac{\# C(i)}{n^\gamma \beta(n)} \ge 
\sum_{j\in C'(i)}  \frac1\gamma j^{-\gamma}-\varepsilon \Big) = 1,$$
where $C'(i)$ agrees with the component  of $i$ in the restriction of
$\mathscr{G}_{\text{NR}}$ to the first $m$ vertices. The lower bound in (\ref{pwc}) follows as $m$ was arbitrary.\smallskip

To prove the upper bound in (\ref{pwc}) note that
\begin{align}\label{cle}
   \# C(i) \leq \sum_{j\in C'(i)} \text{outdeg}^{>m}(j) + \# \bigg\{ k\in\{m+1,\ldots,n\} \colon {k \leftrightarrow i, \,\atop k \not\sim j \,\,\forall j\in C'(i)} \bigg\}. 
\end{align}
It remains to show that the added set on the right is negligible, i.e.\ that
vertices with a distance greater than two 
to~$i$, which are not a nearest neighbour of some $j\in\{1,\dots,m\}$, are negligible. Denote by $C^{>m}_{\ge2}(i)$ the set of said vertices. Then we prove for any $\varepsilon>0$, that
\begin{align} \label{negl}
   \lim_{m\to \infty} \lim_{n\to \infty} \mathbb{P}\Big(\frac{1}{n^\gamma\beta(n)}\#C^{>m}_{\ge 2}(i) > \varepsilon \Big) =0 \,.
\end{align}
Recall that $X^{(i)}_l$ counts the paths running of length $l$ running above the threshold $m$ and ending in $i$ and observe that \smash{$\#C^{>m}_{\ge 2}(i) \le \sum_{l\ge2} X^{(i)}_l$}. By (\ref{intermedrec}) we can estimate the expectation of the latter by 
\begin{align*}
    \mathbb{E}\Big[\sum_{l\ge2} X^{(i)}_l\Big] \le &  \sum_{l\ge2} \mu_l(i) \le  \sum_{l\ge2}  \phi_l i^{-\gamma} 
    \\ \le & \,c \sum_{l\ge2} \varphi_n^l m^{(\tfrac12-\gamma)l} \sqrt{n} \le c \,m^{1-2\gamma} n^{\gamma} \beta(n) \;.
\end{align*}
Now (\ref{negl}) follows from Markov's inequality. Together with
(\ref{cheby})  and Lemma \ref{indep} this implies (\ref{pwc}) and hence the lemma.
\end{proof}

\noindent We now complete the proof of Theorem~\ref{main} $(I\!I)$.

\begin{lemma}\label{conclusion}
    We have
    \begin{align*}
    \frac{\# C_1(\mathscr{G}_n)}{n^\gamma \beta(n)}  \overset{d}{\longrightarrow} \sup_{i\in\mathbb N}\bigg\{\sum_{j\in\mathcal C(i)} \frac{1}{\gamma}\, j^{-\gamma}\bigg\}\;.
\end{align*}
\end{lemma}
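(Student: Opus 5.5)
We pass from the single-component statement of Lemma~\ref{3.9} to the maximum over components, controlling the tail in $i$ uniformly. Write $S(i):=\sum_{j\in\mathcal C(i)}\frac1\gamma j^{-\gamma}$ and $S^{(m)}(i)$ for the same sum over the component of $i$ in the restriction of $\mathscr G_{\text{NR}}$ to $\{1,\dots,m\}$. The plan is to show that, for fixed large $m$, the vector $\big(\#C(i)/(n^\gamma\beta(n))\big)_{i\le m}$ converges jointly in distribution to a vector that is within $\varepsilon$ of $(S(i))_{i\le m}$ with probability close to one. Then $\#C_1(\mathscr G_n)=\max_{i\le m}\#C(i)$ on an event of high probability, and finally $m\to\infty$.

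First, the joint convergence. The proof of Lemma~\ref{3.9} does more than a single-$i$ statement gives. By Lemma~\ref{indep}, the core graph on $\{1,\dots,m\}$ can be coupled with $\mathscr G_{\text{NR}}|_{\{1,\dots,m\}}$ so that the two agree with high probability. By \eqref{cheby}, all $\mathrm{outdeg}^{>m}(j)/(n^\gamma\beta(n))$, $j\le m$, converge jointly in probability to the deterministic limits $\frac1\gamma j^{-\gamma}$. Finally, \eqref{cge1}, \eqref{cle} and \eqref{negl} hold simultaneously for all $i\le m$ by a union bound over the finitely many $i$. Together these give, with probability $\to1$ as $n\to\infty$ and then $\limsup$ over $m$, that
$$\max_{i\le m}\Big|\tfrac{\#C(i)}{n^\gamma\beta(n)}-S^{(m)}(i)\Big|\le\varepsilon .$$
Next, we need $\max_{i\le m}S^{(m)}(i)\to\sup_{i}S(i)$ almost surely as $m\to\infty$. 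Since $\sum_{i,j}\lambda_{ij}<\infty$ (because $\gamma>\frac12$), $\mathscr G_{\text{NR}}$ almost surely has only finitely many edges touching vertices $\le m_0$ beyond any level; more simply, $\sum_{j>m}\sum_{k}\lambda_{jk}\to0$. Hence almost surely there is a random $m_1$ such that no edge joins $\{1,\dots,m_0\}$ to $\{m_1,m_1+1,\dots\}$ through vertices beyond $m_1$, and the components stabilise. Moreover, each $S(i)$ is finite almost surely: the expected number of edges is finite, so every component is finite. Components consisting only of vertices $>m$ have $S\le\sup_{i>m}S(i)$, and this tends to $0$ almost surely because only finitely many vertices have any edge and isolated $i$ give $\frac1\gamma i^{-\gamma}\to0$. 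Therefore the supremum is attained and is approximated by the first $m$ vertices.

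Second, we must show that the largest component intersects $\{1,\dots,m\}$ and that the relevant components are exactly the $C(i)$. Lemma~\ref{lcc} shows that, with probability at least $1-\varepsilon$, every component avoiding $\{1,\dots,m\}$ has size below $\beta(n)n^\gamma$. Meanwhile $\#C(1)\ge\mathrm{outdeg}^{>m}(1)\approx\frac1\gamma\beta(n)n^\gamma>\beta(n)n^\gamma$ since $\gamma<1$. Hence $\#C_1=\max_{i\le m}\#C(i)$ with probability at least $1-2\varepsilon$ for large $n$. Combining this with the first step and sending $\varepsilon\to0$, $m\to\infty$ gives the claim; multiplying by $\varphi_n\to\varphi$ then yields Theorem~\ref{main}(II).

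The main obstacle is the uniformity of \eqref{negl} and of the lower bound \eqref{cge1} across all $i\le m$ simultaneously, with errors summing to $o(n^\gamma\beta(n))$. For \eqref{negl} a union bound costs only a factor $m$, while $m^{1-2\gamma}\cdot m\cdot$ (the bound uniform in $i$) could fail. To handle this, I would bound $\sum_{i\le m}\#C^{>m}_{\ge2}(i)$ using $\sum_{i\le m}\phi_l i^{-\gamma}\le c\,\phi_l m^{1-\gamma}$. This gives a total $c\,m^{2-3\gamma}n^\gamma\beta(n)$, which is small if $\gamma>\frac23$. Otherwise I would instead work with a fixed $m_0$ for the limit object and a larger threshold $m\gg m_0$ for the path counts, so that only the $m_0$ relevant vertices need the bound.
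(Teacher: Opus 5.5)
There is a genuine gap in your treatment of the limit object. The claim $\sum_{i,j}\lambda_{ij}<\infty$ is false. In fact $\sum_{i,j}\lambda_{ij}=\frac{\varphi^2}{2\gamma-1}\big(\sum_i i^{-\gamma}\big)^2=\infty$ because $\gamma<1$, and already $\sum_j\lambda_{ij}=\infty$ for every fixed $i$. The edge multiplicities are independent Poisson variables, so the second Borel--Cantelli lemma shows that every vertex of $\mathscr G_{\text{NR}}$ has infinitely many neighbours almost surely. Hence there are no isolated vertices, there is no level beyond which vertices carry no edges, and every $\mathcal C(i)$ is infinite. Your arguments that $S(i)<\infty$, that $\sup_{i>m}S(i)\to0$, and that the supremum is attained all rest on this false premise.

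Note that $\max_{i\le m}S^{(m)}(i)\uparrow\sup_iS(i)$ holds trivially by monotonicity. What actually has to be proved, and what you do not prove, is that $\sup_iS(i)<\infty$ almost surely. Without it the limit is not a real random variable, and your final passage $m\to\infty$ does not give convergence in distribution. Finiteness comes from summability of the weights along components, not from finiteness of components. For example, $\sum_j\lambda_{ij}j^{-\gamma}\propto i^{-\gamma}\sum_j j^{-2\gamma}<\infty$ uses exactly $\gamma>\frac12$.

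This is the step where the paper does real work. Write $W(\mathcal C):=\sum_{j\in\mathcal C}\frac1\gamma j^{-\gamma}$. The paper proves $\mathbb E\big[\sum_{\mathcal C}W(\mathcal C)^2\big]<\infty$, with the sum over the components of $\mathscr G_{\text{NR}}$. It expands this into $\sum_j \gamma^{-2}j^{-2\gamma}$ plus $\sum_{j,k}\gamma^{-2}(jk)^{-\gamma}\,\mathbb P(j\leftrightarrow k)$. It then bounds the two-point function by comparison with a critical Durrett--Kesten graph, for which $\mathbb P(j\leftrightarrow k)\le c\log(j\wedge k)/\sqrt{jk}$.

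Within your framework you could instead count paths in the restriction of $\mathscr G_{\text{NR}}$ to $\{m+1,m+2,\dots\}$. Set $\nu_m:=\frac{\varphi^2}{2\gamma-1}\sum_{y>m}y^{-2\gamma}$; the restricted graph is subcritical once $\nu_m<1$.
\begin{itemize}
\item Path counting gives $\mathbb P(j\leftrightarrow k\text{ above }m)\le\frac{\varphi^2}{2\gamma-1}(jk)^{-\gamma}/(1-\nu_m)$.
\item Hence the component $\mathcal C^{>m}(j)$ of $j$ in the restricted graph satisfies $\mathbb E\,W(\mathcal C^{>m}(j))\le\frac1\gamma j^{-\gamma}/(1-\nu_m)$.
\item This yields $S(i)<\infty$ almost surely for $i\le m$, since only the $m$ vertices below the threshold can glue such components together, and they attach to them through independent edges.
\item The identity $\sum_{\mathcal C}W(\mathcal C)^2=\sum_{j>m}\frac1\gamma j^{-\gamma}\,W(\mathcal C^{>m}(j))$, summed over restricted components, shows that the maximal weight of a component avoiding $\{1,\dots,m\}$ tends to $0$ in probability as $m\to\infty$.
\end{itemize}
The remainder of your plan matches the paper's Cram\'er--Wold argument. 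That is: joint convergence from Lemma~\ref{indep}, \eqref{cheby}, \eqref{cge1}, \eqref{cle} and \eqref{negl} with a second, larger threshold, and Lemma~\ref{lcc} to localise $C_1(\mathscr G_n)$.
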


\begin{proof}
Let us first show that the limiting object on the right hand side in the lemma is well-defined. 
We identify components by its smallest index vertex, i.e. 
\begin{align*}
    \mathcal C_{\le}(i):=\left\{\begin{matrix}
          \mathcal C(i)\,, & \text{if } \,i=\min\{j\,:\,j\in \mathcal C(i)\}  \\
            \emptyset\,, & \text{ otherwise,}
    \end{matrix}\right.
\end{align*} similarly we define $C_{\le}(i)$. 
Then it suffices to show that
$$\mathbb E\bigg[\sum_{i=1}^\infty \Big(\sum_{j\in \mathcal C_{\le}(i)}\frac{1}{\gamma}\, j^{-\gamma}\Big)^2\bigg]< \infty \,.$$
The left hand side is equal to
\begin{align*}
   \mathbb E\bigg[ & \sum_{i=1}^\infty\Big(\sum_{j\in \mathcal C_{\le}(i)}\frac{1}{\gamma^2}\, j^{-2\gamma}+\sum_{j,k\in \mathcal C_{\le}(i)}\frac{1}{\gamma^2}\, (jk)^{-\gamma}\Big)\bigg] \\
   & \le \, \frac{1}{\gamma^2}\sum_{i=1}^\infty i^{-2\gamma}+ \frac{1}{\gamma^2}  \sum_{j,k=1}^\infty (jk)^{-\gamma} \mathbb{P}\big( j\leftrightarrow  k\text{ in } \mathscr G _{\text{NR}}\big)\;.
\end{align*}
The first term is finite as $\gamma>\tfrac12$. For the second term, note that
$$\mathbb{P}\big( i\sim   j\text{ in } \mathscr G _\text{NR}\big)\le \lambda_{ij}=\frac{\varphi^2}{2\gamma-1}(ij)^{-\gamma} \le \lambda_c \underbrace{ (i \wedge j)^{\gamma-1}(i \vee j)^{-\gamma}}_{=:h(i,j)}\;,$$
where the last inequality follows for all, but finitely many, tuples $(i,j)$ and %$\lambda_c$ is given by
\begin{align*}
    \lambda_c = & \bigg(\int_0^\infty \frac{h(1,y)}{\sqrt{y}}\,\text{d}y\bigg)^{-1}=\bigg(\Big[\frac{1}{\gamma-\frac12}y^{\gamma-\frac12}\Big]_0^1+\Big[\frac{1}{\frac12-\gamma}y^{\frac12-\gamma}\Big]_1^\infty\bigg)^{-1}
    =\frac{\gamma}{2}-\frac{1}{4}\,.
\end{align*}
We define $\mathscr G _{\text{DK}}$ as the random graph on the vertex set $\mathbb N$, where we draw an edge between to vertices $i$ and $j$ with probability $\lambda_c h(i,j)\wedge 1$, independent of all other edges. This random graph is inhomogeneous of rank $-1$, i.e. $h(ti,tj)=t^{-1}h(i,j)$ and $\lambda_c$ is chosen such that the graph is at criticality. For this particular class of random graphs it was shown in \cite{YZ}, that 
$$\mathbb{P}\big( i\leftrightarrow     j\text{ in } \mathscr G _{\text{DK}}\big)\le c \,\frac{\log(i\wedge j)}{\sqrt{ij}}\;.$$
We can couple $\mathscr G _{\text{NR}}$ and $\mathscr G _{\text{DK}}$ such that every edge that exists in $\mathscr G _{\text{NR}}$ also exists in~$\mathscr G _{\text{DK}}$, except for finitely many edges.
Putting everything together, we arrive at
\begin{align*}
    \mathbb E\bigg[\sum_{i=1}^\infty \Big(\sum_{j\in \mathcal C_{\le}(i)}\frac{1}{\gamma}\, j^{-\gamma}\Big)^2\bigg] \le c_1 + c_2\sum_{j,k=1}^\infty (jk)^{-\gamma-\tfrac12} \log(j \wedge k) <\infty \,.
\end{align*}
To prove convergence we want to argue by Lemma \ref{3.9} and Cramer-Wold, that for every fixed $m\in\mathbb N$
\begin{align}\label{finitem}
   \bigg( \frac{C_{\le}(i)}{n^\gamma \beta(n)}\bigg)_{1\le i \le m} \overset{d}{\longrightarrow} \bigg(\sum_{j\in\mathcal C_{\le}(i)} \frac{1}{\gamma}\, j^{-\gamma}\bigg)_{1\le i \le m} \,.
\end{align}
Therefore let $(t_1,\dots,t_m)\in\mathbb R^m$ be arbitrary, recall that $C'(i)$ is the component of $i$ in the core graph. Similar to (\ref{cge1}) we have for all $M$ sufficiently large
\begin{align*}
 \sum_{i=1}^m t_i \frac{C_{\le}(i)}{n^\gamma \beta(n)} \ge  \sum_{i=1}^m  \frac{t_i}{n^\gamma \beta(n)}\bigg[  \sum_{j\in C_{\le}'(i)} \text{outdeg}^{>M}(j) - \# \bigg\{ j>M\,:{ \exists k\not=l\in C'_{\le}(i)\atop \text{ with } k,l\sim j}\bigg\}\bigg].  
\end{align*}
By (\ref{cge2}) the rightmost term goes to zero in probability and leftmost term on the right hand side converges in distribution to the desired object, since the core graph converges in distribution to $\mathscr G_{\text{NR}}$  by Lemma \ref{indep} and the discussion thereafter. The matching upper bound follows by (\ref{cle}) and (\ref{negl}). Thus (\ref{finitem}) follows.
Now note that by the remark following Lemma~\ref{lcc}, for every $\varepsilon>0$, there exists an $m$, such that
$$ \frac{C_1(\mathscr{G}_n)}{n^\gamma \beta(n)}=\max_{1\le i \le m}\frac{C_{\le}(i)}{n^\gamma \beta(n)}$$
with probability greater than $1-\varepsilon$. By continuity of the $max$ function we conclude 
$$ \frac{C_1(\mathscr{G}_n)}{n^\gamma \beta(n)}=\max_{1\le i \le m}\frac{C_{\le}(i)}{n^\gamma \beta(n)} \overset{d}{\longrightarrow} \sup_{1\le i \le m}\bigg\{\sum_{j\in\mathcal C_{\le}(i)} \frac{1}{\gamma}\, j^{-\gamma}\bigg\}\;,$$
with probability greater than $1-\varepsilon$. Letting $\varepsilon \to 0$ and thus $m\to\infty$ we therefore obtain
\begin{align*}
    \frac{\# C_1(\mathscr{G}_n)}{n^\gamma \beta(n)}  \overset{d}{\longrightarrow} \sup_{i\in\mathbb N}\bigg\{\sum_{j\in\mathcal C(i)} \frac{1}{\gamma}\, j^{-\gamma}\bigg\}\;,
\end{align*}
using that the right hand side is well defined.
\end{proof}

\subsection{Below the critical window}

By (\ref{cheby}) the degrees of the first $m$ vertices concentrate around their mean and thus by Lemma~\ref{lcc} the largest connected component still has to intersect the set $\{1,\dots,m\}$. Lemma \ref{indep} suggests that the components of the first $m$ vertices are disjoint. The next lemma confirms this.

\begin{lemma}\label{disj}
Suppose $\beta(n)n^{\gamma-\frac12}\to0$. Then, for every $m\in\mathbb N$, the components $C(1),\ldots, C(m)$ in $\mathscr G_n$ are pairwise disjoint with high probability.
\end{lemma}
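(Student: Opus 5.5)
By a union bound over the $\binom m2$ pairs, it suffices to fix $i<j\le m$ and show $\mathbb P(i\leftrightarrow j)\to0$. Any connection between $i$ and $j$ contains a self-avoiding path $i=x_0x_1\cdots x_l=j$. I would split such a path at its visits to $\{1,\dots,m\}$. Then $i\leftrightarrow j$ implies that there is a chain $i=z_0,z_1,\dots,z_r=j$ of distinct vertices in $\{1,\dots,m\}$, with consecutive $z$'s joined by a path whose interior lies entirely in $\{m+1,\dots,n\}$. Since $r\le m$ is bounded, a union bound over chains reduces the claim to showing, for fixed $a\ne b\le m$,
$$\mathbb P\big(a \text{ and } b \text{ are joined by a path with interior in }\{m+1,\dots,n\}\big)\to0 .$$

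This probability is at most the expected number of such paths, which I split by length. A path of length one is a direct edge and has probability at most $\beta(n)\to0$. For length $l\ge2$, the path is $a\,x_1\cdots x_{l-1}\,b$ with $x_k>m$. Its expected count is bounded by summing the last edge probability $\beta(n)(x_{l-1}\vee b)^{\gamma-1}(x_{l-1}\wedge b)^{-\gamma}$ against the number of paths from $a$ to $x_{l-1}$ running above $m$. That count is controlled exactly as in the recursion \eqref{recurmu}: starting from the single vertex $a$ instead of summing over $x_0$, one gets $\mu_1(x)=\beta(n)a^{-\gamma}x^{\gamma-1}$ for $x>m$, so the starting constants are $\psi_1=\beta(n)a^{-\gamma}$ and $\phi_1=0$. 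Lemma~\ref{1.3} then propagates the bounds $\mathbbm 1_{\{x>m\}}\psi_kx^{\gamma-1}+\phi_kx^{-\gamma}$ with the same recursion as in the proof of \eqref{intermedrec}. Inducting as there gives
$$\mathbb E[\#\text{paths of length } l]\lesssim \beta(n)^2 n^{2\gamma-1}(a b)^{-\gamma}\big(C\varphi_n m^{1/2-\gamma}\big)^{l-2}\,(\log n)^{O(1)}\text{-free}\,.$$
In this bound the two-edge term (a single one-connector) is $\frac{\varphi_n^2}{2\gamma-1}(ab)^{-\gamma}$, as in Lemma~\ref{indep}. Each further pair of steps through a vertex above $m$ costs a factor $\beta(n)^2n^{2\gamma-1}m^{1-2\gamma}=\varphi_n^2m^{1-2\gamma}$. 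The $\log n$ factors attached to the odd-step terms must be checked to be dominated. In the previous lemma they were absorbed into $\phi_l n^{1-\gamma}\le\psi_l n^\gamma$, and here I would verify analogously that the term $\beta(n)\log n$ is $o(1)$ when compared with the $n^{\gamma-1/2}$ gains.

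Summing over $l\ge2$, the resulting geometric series is $O(\varphi_n^2)$ because $\varphi_n\to0$, and so it tends to zero. Hence each pair $(a,b)$ is connected through $\{m+1,\dots,n\}$ with probability $o(1)$, and the union bound over the finitely many chains finishes the proof.

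The main obstacle I expect is bookkeeping the recursion with the modified start $\phi_1=0$. The goal is to ensure that every path length carries at least a factor $\varphi_n^2$, or $\beta(n)$ times a bounded quantity, uniformly in $l$, including the $\log n$ factors generated by Lemma~\ref{1.3} when $s_2=m/n$. If those logs are problematic, the fallback is to split the interior vertices at a threshold $\tilde s n$ with $\tilde s=n^{-\delta}$ and reuse the estimates \eqref{15}.
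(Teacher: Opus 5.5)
Your proposal is correct and follows essentially the paper's route. Both arguments take a first-moment count of paths between two distinct vertices of $\{1,\dots,m\}$ with interior above $m$, propagate it through the recursion of Lemma~\ref{1.3} started from $\phi_1=0$, and sum the resulting geometric series of order $\varphi_n^2$; the paper sums over all endpoints in $\{1,\dots,m\}$ at once, and your chain decomposition is unnecessary, since the first segment of any path between two such vertices already has its interior above $m$. The $\log n$ factors are harmless because $\beta(n)\log n=\varphi_n n^{1/2-\gamma}\log n=o(\varphi_n)$, which is what the paper implicitly uses.
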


\begin{proof}
We show that the expected number of paths with start and endpoints in $\{1,\ldots,m\}$ goes to 0. Let $\tilde X_l$ be the number of paths $x_0x_1\cdots x_{l-1}x_l$ of length $l$ satisfying $x_0,x_l\le m$ and $x_k>m$ for all $0<k<l$. We have by Markov's inequality 
\begin{align*}
    \mathbb P ( & \text{components induced by }\{1,\ldots,m\}\text{ are disjoint}) \\ & \ge \mathbb P (\text{no edges in }\{1,\ldots,m\}) \big(1 -  \mathbb P\big (\sum_{l=2}^n \tilde X_l \ge 1\big)\big) \\  &  \ge \exp \big( - \beta(n)m^{2}\big) \Big(1- \mathbb{E}\Big[\sum_{l=2}^n \tilde X_l\Big]\Big)\;.
\end{align*}
To estimate the expectation above, we use the same recursion as in the last section. Recall (\ref{14}) and Lemma \ref{1.3}, here we have for the starting value that
\begin{align*}
    \mu_1(x_1) = \sum_{0<x_0 \le m} \beta(n) x_0^{-\gamma} x_1^{\gamma-1} \le \mathbbm{1}_{\{x_1 > m\}}\frac{\beta(n) m^{(1-\gamma)}}{1-\gamma}\, x_{1}^{\gamma-1} \;.
\end{align*}
Hence we define \smash{$\tilde \psi_1=\tfrac{\beta(n) m^{1-\gamma}}{1-\gamma}$} and $\tilde \phi_1=0$. Now applying Lemma \ref{1.3} with $s_1= s_2 = m n^{-1}$ (thus $s_1n=m$ and \smash{$\log(\tfrac{1}{s_1})< \log(n)$}) we define 
\begin{equation}\label{rec}
    \begin{aligned}
		\tilde \psi_{k+1}&=\tilde \phi_k \beta(n) \frac{m^{(1-2\gamma)}}{2\gamma-1}+\tilde \psi_k \beta(n) \log(n)
		\\ \tilde \phi_{k+1}&=\tilde \phi_k \beta(n) \log(n)+\tilde \psi_k \beta(n) \frac{n^{2 \gamma -1}}{2\gamma-1} \;.
\end{aligned}
\end{equation}
Similar to the last section, one can prove by induction that for all $k\ge 2$
\begin{equation} \label{esti1}
    \begin{aligned}
      \tilde \psi_{k} = &\,O\big(\beta(n)^{k}m^{(1-\gamma)+(1-2\gamma)\lfloor \tfrac{k-1}2 \rfloor}n^{(2\gamma-1)\lfloor \tfrac{k-1}2 \rfloor}\log(n)^{\mathbbm{1}_{\{k \text{ even}\}}}\big) \\
    \tilde \phi_{k} = &\,O\big(\beta(n)^{k}m^{(1-\gamma)+(1-2\gamma)(\lfloor \tfrac{k}2 \rfloor-1)}n^{(2\gamma-1)\lfloor \tfrac{k}2 \rfloor}\log(n)^{\mathbbm{1}_{\{k \text{ odd}\}}}\big)\;.
    \end{aligned}
\end{equation}
By the above estimate and (\ref{14}) it follows for a suitable  $c>0$ that 
\begin{align*}
      \mathbb{E}[X_l]&\le\sum_{0<x_l\le m} \phi_l x_l^{-\gamma} \le c\,m\beta(n)^{l} m^{(\tfrac12-\gamma)l} n^{(\gamma-\tfrac12)l} \;.
\end{align*}
Summing over all $l\ge2$ yields
\begin{align*}
    \sum_{l=2}^n \mathbb{E}[X_l]&\le c \,m \sum_{l=2}^\infty \Big(\beta(n) m^{\tfrac12-\gamma} n^{\gamma-\tfrac12} \Big)^l 
     \le c\,m^{2(1-\gamma)} (\beta(n) n^{\gamma-\tfrac12})^2\,,
\end{align*}
which goes to zero as $n\to\infty$ by assumption. Thus the components of the first $m$ vertices are disjoint. 
\end{proof}

\noindent By (\ref{negl}) the number of vertices in $C(i)$ which are not a nearest neighbour of at least one of the vertices in $\{1,\dots,m\}$ is of smaller order than $n^\gamma \beta(n)$. This implies that
\begin{equation*}\frac{\#C(i)}{n^\gamma \beta(n)}=\frac{\text{outdeg}(i)}{n^\gamma \beta(n)}+o(1)\overset{\mathbb P}{\longrightarrow} \frac{1}{\gamma}i^{-\gamma}\,. 
\end{equation*}
As $m$ is bounded this completes the proof in the star-component regime~(III).
\medskip

%\subsection{Below the critical window: The isolated edges regime}

As the largest degree was the main contributor in the star component regime, it is quite intuitive that this should be the case in the  isolated edges regime  as well. 
%{\color{red}Isn't this even proved in the section above?}{\color{blue}kinda, there is the difference that the degrees dont go to infinity anymore and thus you cannot order them like in the last section. Here you have a finite number of finite random variables and thus you cannot argue that the degree of 1 is the largest anymore.}
Let us first consider the case where $\beta(n)n^\gamma \to 0$ as $n\to\infty$.

\begin{lemma}\label{outdeg}
\begin{itemize}
    \item[(a)]
    Let $P_1,\dots,P_n$ be independent random variables where $P_j$ is Poisson distributed with parameter $\lambda_j=\sum_{i=j+1}^n \beta(n) j^{-\gamma}i^{\gamma-1}$. Then $\emph{outdeg}(1),$ $\dots,\emph{outdeg}(n) $ can be coupled to $P_1,\dots,P_n$, such that
    $$ \mathbb P\big(\exists j\in\{1,\dots,n\}\,:\,\emph{outdeg}(j)\neq P_j\big)\to0\,.$$
\item[(b)]
    Let $\hat P_1,\dots,\hat P_n$ be independent random variables where $\hat P_j$ is Poisson distributed with parameter $\hat \lambda_j=\sum_{i=1}^{j-1} \beta(n) i^{-\gamma}j^{\gamma-1}$. Then $\emph{indeg}(1),\dots,\emph{indeg}(n) $ can be coupled to $\hat P_1,\dots, \hat P_n$, such that
    $$ \mathbb P\big(\exists j\in\{1,\dots,n\}\,:\,\emph{indeg}(j)\neq \hat P_j\big)\to0\,.$$
\end{itemize}    
\end{lemma}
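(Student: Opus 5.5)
The plan is a two-stage coupling. First replace each outdegree by a sum of independent Poisson variables, which is exactly Poisson. Then show that the only obstruction to the outdegrees being independent — shared edges — is negligible under the assumption $\beta(n)n^\gamma\to0$. Part (b) is the same argument with the roles of the endpoints reversed.

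\textbf{Step 1 (Poissonisation of edges).} For each pair $i<j$, couple the indicator $\mathbbm 1_{\{i\sim j\}}\sim\mathrm{Ber}(p_{ij})$ with a variable $Y_{ij}\sim\mathrm{Pois}(p_{ij})$, independently over pairs, using the maximal coupling. Then
\[
\mathbb P(\mathbbm 1_{\{i\sim j\}}\neq Y_{ij})\le p_{ij}^2 .
\]
A union bound gives
\[
\mathbb P\big(\exists\, i<j:\ \mathbbm 1_{\{i\sim j\}}\neq Y_{ij}\big)\le \sum_{i<j}\beta(n)^2 i^{-2\gamma}j^{2\gamma-2}.
\]
Since $2\gamma>1$, the sum over $i$ converges, so the right-hand side is at most
\[
C\beta(n)^2\sum_{j\le n} j^{2\gamma-2}\le C\beta(n)^2 n^{2\gamma-1}=C\big(\beta(n)n^{\gamma-\frac12}\big)^2\to0,
\]
which holds because $\beta(n)n^\gamma\to0$. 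On the complementary event, $\mathrm{outdeg}(j)=\sum_{i>j}Y_{ji}=:\tilde P_j$, which is exactly $\mathrm{Pois}(\lambda_j)$.

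\textbf{Step 2 (removing the dependence).} The variables $\tilde P_j$ are not independent, because $Y_{ji}$ enters both $\mathrm{outdeg}(j)$ and $\mathrm{indeg}(i)$. For outdegrees alone, however, each $Y_{ji}$ with $j<i$ appears only in $\tilde P_j$, the outdegree of the smaller endpoint. So the families $\{Y_{ji}\}_{i>j}$ are disjoint for different $j$, and $\tilde P_1,\dots,\tilde P_n$ are already independent. Setting $P_j:=\tilde P_j$ therefore finishes (a), with failure probability bounded by the Step 1 bound.

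\textbf{Part (b).} Symmetrically, $\mathrm{indeg}(j)=\sum_{i<j}Y_{ij}$ uses only the pairs whose larger endpoint is $j$. These sets of pairs are again disjoint for different $j$, so the same coupling gives independent $\hat P_j\sim\mathrm{Pois}(\hat\lambda_j)$.

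The only step needing care is the summation bound in Step 1. It relies on $\gamma>\frac12$ to sum over $i$, and on the regime assumption to make $\beta(n)^2n^{2\gamma-1}\to0$. This holds even under the weaker condition $\beta(n)n^{\gamma-\frac12}\to0$, so there is no real obstacle. One subtlety to state explicitly: (a) and (b) give two separate couplings, and no joint independence of in- and outdegrees is claimed.
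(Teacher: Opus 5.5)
Your proof is correct and is essentially the paper's argument: the paper applies Le Cam's theorem to each $\mathrm{outdeg}(j)$ and sums the resulting total-variation bounds $\sum_{i>j}p_{ji}^2$ over $j$ to get $C\beta(n)^2n^{2\gamma-1}\to0$, and your edge-by-edge Poissonisation is the standard coupling proof of that same bound. Your version is slightly more explicit: it spells out that the outdegrees (resp.\ indegrees) depend on disjoint sets of edges and are therefore independent, which the paper uses tacitly to combine the per-vertex couplings into a joint coupling.
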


\begin{proof}
We do the proof for the outdegrees, the indegrees follow analogously. By Le Cam's theorem, see e.g.\ \cite[(1.23)]{BHJ}, we have 
\begin{align*} \sum_{j=1}^n \sum_{k=0}^\infty \big| \mathbb P\big(\text{outdeg}(j)=k\big)-\mathbb P\big(P_j = k\big) \big| & \le  \sum_{j=1}^n \sum_{i=j+1}^n \beta(n)^2 j^{-2\gamma}i^{2(\gamma-1)} \\ & \le C \beta(n)^2 n^{2\gamma-1} \to 0 \, . 
\end{align*}\\[-14mm]
\end{proof}

\medskip

Note that the family $\hat P_1,\dots,\hat P_n$ is not independent of the family $ P_1,\dots, P_n$. Also note that $\hat \lambda_j \asymp \beta(n)$ with $\hat \lambda_1 \le \hat \lambda_2 \le \ldots \le \hat \lambda_n$. Let $\varepsilon\in(0,\gamma)$ be arbitrary, then 
\begin{equation}
    \begin{aligned} \label{in}
    \mathbb P \Big(\max_{j=1,..,n^\varepsilon}\hat P_j= 0 & \,,\;\max_{j=n^\varepsilon,..,n}\hat P_j\le1 \Big)=\,\prod_{j=1}^{n^\varepsilon} \mathbb P \big(\hat P_j=0 \big)\prod_{j=n^\varepsilon+1}^n  \mathbb P \big(\hat P_j\le  1 \big) \\\ge\, &e^{-c\beta(n)n^\varepsilon}\, \mathbb P \big(\hat P_n\le  1 \big)^n  \ge  \,e^{-c\beta(n)n^\varepsilon}\, e^{-c\beta(n)n}(1+c\beta(n))^n
    \\ \ge\, & \exp (-c\beta(n)n^\varepsilon- \tfrac{ c^2}{2}\beta(n)^2 n) \to 1\,.
\end{aligned}
\end{equation}
Note that $\lambda_j \sim  \tfrac1\gamma j^{-\gamma}\beta(n) n^{\gamma}$. Hence for the outdegrees we have
\begin{equation}
    \begin{aligned}\label{out}
      \mathbb P \Big(\max_{j=1,..,n} P_j\le  1 \Big) =\,  e^{-\sum_{j=1}^n \lambda_j} \prod_{j=1}^n (1+\lambda_j)  \ge\, \exp\big( -c (\beta(n)n^\gamma)^2\big)\to 1\,.
\end{aligned}
\end{equation}
Thus with high probability the largest connected component can at most be a monotone path, that is $C_1=(y_0,\dots,y_l)$ for some $l\in\mathbb N$ and vertices $y_0<y_1<\ldots<y_l$. Let $Y_l$ be the number of all such paths for a fixed length $l$. Then
\begin{align*}
    \mathbb{E}\big[Y_l\big]\le & \sum_{y_l=1}^n \cdots  %\sum_{y_1=1}^{y_2}
    \sum_{y_0=1}^{y_1} \beta(n)^l y_l^{\gamma-1} y_{l-1}^{-1}\cdots y_{1}^{-1} y_0^{-\gamma} 
     \le  \sum_{y_l=1}^n \cdots  \sum_{y_1=1}^{y_2} \beta(n)^l y_l^{\gamma-1} y_{l-1}^{-1}\cdots y_{2}^{-1} \tfrac{y_1^{-\gamma} }{1-\gamma}
    \\ \le & \Big(\frac{\beta(n)}{1-\gamma}\Big)^l\sum_{y_l=1}^n y_l^{\gamma-1}y_l^{1-\gamma} = \Big(\frac{\beta(n)}{1-\gamma}\Big)^ln\;.
\end{align*}
Thus by Markov's inequality
\begin{equation}\label{eyl}
%    \begin{aligned}
    \mathbb P\big(\exists l\ge2 \text{ such that }Y_l>0\big)\le 
    %\mathbb P\big(\sum_{l=2}^nY_l\ge1 \big) \le 
    \sum_{l=2}^n\mathbb{E}\big[Y_l\big]
    \le \,c\,n \Big(\frac{\beta(n)}{1-\gamma}\Big)^2\to 0\,.
%\end{aligned}
\end{equation}
As the largest connected component has to be a monotone path, but there is no such path with length greater than two, we have shown 
$\mathbb P\big( \#C_1(\mathscr{G}_n)=2\big)\to 1$.\medskip

If $\beta(n)n^\gamma \to c$, then $\lambda_1 \to \tfrac{c}{\gamma}$. Moreover we see that (\ref{in}) still holds and (\ref{out}) can be weakened to: For every $\delta>0$ there exists a  $N=N(\delta)$ such that 
\begin{equation} \label{wout}
\begin{aligned}
     \mathbb P \Big(\max_{j=N,..,n} P_j\le  1 \Big)  \ge \, &  e^{-\sum_{j=N}^n \lambda_j}\prod_{j=N}^n (1+\lambda_j)  \ge\,  \exp\Big(- \sum_{j=N}^n c' \beta(n)^2n^{2\gamma}j^{-2\gamma} \Big)\\\ge\,& \exp\big(- c'' N^{1-2\gamma})\ge 1-\delta\,.
\end{aligned}    
\end{equation}
Given $\delta>0$ and thus $N$, the family $(\max_{j=1,..,N}P_j)_n$ is tight since
\begin{equation} \label{tght}
    \mathbb P \Big(\max_{j=1,..,N} P_j> A\Big) \le \frac1A \sum_{j=1}^N \lambda_j \le \frac{c'}{A}<\delta \,,
\end{equation} 
where the last line follows for $A=A(\delta)$ sufficiently large. 
\pagebreak[3]

We have shown that with probability $1-\delta$ the largest connected component contains only one of the vertices of $\{1,\dots,N\}$ and hence its nearest neighbours. Since all the indegrees are less or equal one and by (\ref{eyl}) every monotone path has at most length one, the largest connected component has a star shape. Hence
$$ \mathbb P \Big(\#C_1(\mathscr{G}_n)\le2\max_{j=1,..,N} P_j+1\Big)\ge 1-\delta \;.$$
With $\delta\to0$ and using that $(\max_{j=1,..,n}P_j)_{n\in\mathbb N}$ is tight, we obtain that $(\#C_1(\mathscr{G}_n))_{n\in\mathbb N}$ is tight as well. 

{ %\footnotesize
}
\end{document}